\newtheorem{thm}{Theorem}[section]
\newtheorem{lem}[thm]{Lemma}
\newtheorem{cor}[thm]{Corollary}
\theoremstyle{definition}
\newtheorem{defn}[thm]{Definition}
\theoremstyle{remark}
\newtheorem{rem}[thm]{Remark}
\newcommand{\RR}{\mathbb{R}}
\newcommand{\cL}{\mathcal{L}}
\newcommand{\cA}{\mathcal{A}}
\newcommand{\BA}{1 \wedge (\beta+\alpha)}
\newcommand{\Holder}{H\"{o}lder }
\numberwithin{equation}{section}
\title{$C^{1,\alpha}$ Interior Regularity for Nonlinear Nonlocal Elliptic Equations With Rough Kernels}
\author{Dennis Kriventsov\thanks{Mathematics Department, University of Texas at Austin, Austin, Texas}}
\begin{document}

\maketitle

\begin{abstract}

We prove a $C^{1,\alpha}$ interior regularity theorem for fully nonlinear uniformly elliptic integro-differential equations without assuming any regularity of the kernel. We then give some applications to linear theory and higher regularity of a special class of nonlinear operators.

\end{abstract}

\section{Introduction}

In this paper we consider a broad class of elliptic integro-differential equations, of the type first treated in \cite{CS1}. This includes the following nonlocal Isaacs equations:
\begin{equation}\label{Isaacs}
 I(u,x)=\inf_{\beta} \sup_{\alpha} \int_{\RR^{n}}\delta_{2}u(x,y)K_{\alpha,\beta}(x,y)dy
\end{equation}
where $\delta_{2}u(x,y)=u(x+y)-2u(x)+u(x-y)$ are symmetric differences. The kernels $K_{\alpha,\beta}$ are assumed to satisfy a uniform ellipticity condition, which amounts to being bounded above and below by a multiple of the fractional Laplacian:
\begin{equation}\label{kerelliptic}
\frac{(2-\sigma)\lambda}{|y|^{n+\sigma}} \leq K_{\alpha,\beta}(x,y) \leq \frac{(2-\sigma)\Lambda}{|y|^{n+\sigma}}.
\end{equation}
Operators of this form arise from stochastic control theory. 

A more general notion of ellipticity for fully nonlinear operators, developed in \cite{CS2} for the variable-coefficient case, involves the extremal operators. For a family $\cL$ of linear operators, define
\[
 M^{+}_{\cL}u(x)=\sup_{L\in \cL}Lu(x)
\]
and
\[
 M^{-}_{\cL}u(x)=\inf_{L\in \cL}Lu(x).
\]
An operator $I$ is said to be uniformly elliptic with respect to $\cL$ if 
\begin{equation}\label{elliptic}
 M^{-}_{\cL}v(x)\leq I(u+v,x)-I(u,x)\leq M^{+}_{\cL}v(x)
\end{equation}
for all $u$, $v$ for which $I$ is well-defined; in particular for functions locally $C^{2}$ and whose tails are integrable across the kernel (more on this in Section 2). These are analogous to the extremal Pucci operators in the local second-order theory.  A particularly useful family of linear operators to consider is the ones whose kernel satisfies \eqref{kerelliptic}, as this is the lightest ellipticity assumption which is known to yield \Holder regularity of solutions (see \cite{CS1}). This class will be denoted $\cL_{0}$. In this case the extremal operators admit a convenient explicit formula:
\begin{equation}\label{maximal}
 M^{+}u(x)\equiv M^{+}_{\cL_{0}}u(x)=\int \frac{\Lambda (\delta_{2}u)^{+}-\lambda(\delta_{2}u)^{-}}{|y|^{n+\sigma}}dy.
\end{equation}

The Dirichlet problem for such an operator is formulated as follows: let $g$ be, say, in $L^{\infty}(\RR^{n}\backslash B_{1})$ and $f$ be in $L^{\infty}(B_{1})$. Then $u$ solves the boundary value problem if
\begin{equation}\label{BVP}
\begin{cases} 
 I(u,x)=f(x)  & x\in B_{1}, 
 \\
 u=g & x\notin B_{1}.
\end{cases}
\end{equation}
In general, it isn't possible to find solutions $u$ sufficiently regular so that \eqref{BVP} will make sense classically. To deal with this, \cite{CS1} develops a theory of continuous viscosity solutions, analogous to the second-order case.

The regularity theory for \eqref{BVP} has seen rapid development in recent years. As all second-order elliptic equations can be realized as limits of nonlocal equations as $\sigma \rightarrow 2$, it is of particular interest to prove estimates uniform in $\sigma$.

This was accomplished in a sequence of papers by Caffarelli and Silvestre. In \cite{CS1} they prove that if $I$ is elliptic with respect to $\cL_{0}$, solutions $u$ to \eqref{BVP} lie in $C^{0,\alpha}(B_{1/2})$ uniformly for $\sigma>\sigma_{0}>0$. Here $\alpha$ is a universal constant depending only on $n, \lambda,\Lambda,$ and $\sigma_{0}$.

The second paper \cite{CS2} proves a Cordes-Nirenberg type result in the case $\sigma_{0}>1$. However, the proof requires a stronger assumption on the kernels than \eqref{kerelliptic}. Indeed, let $\cL_{1}$ be the collection of translation-invariant linear operators in $\cL_{0}$ satisfying the following additional constraint:
\begin{equation} \label{kerassumption}
 |\nabla K(y)|\leq C_{0} |y|^{-n-\sigma-1}
\end{equation}
for all $y \neq 0$. Then if $I$ is elliptic with respect to the class $\cL_{1}$, and is sufficiently close to a translation-invariant operator, it is shown  $u\in C^{1,\alpha}(B_{1/2})$ uniformly in $\sigma>\sigma_{0}$.

In \cite{CS3} classical interior regularity ($C^{\sigma+\alpha}$) is proved for translation-invariant, convex operators $I$ elliptic with respect to the class $\cL_{2}\subset \cL_{1}$, which imposes the additional constraint
\begin{equation} \label{kerassumption2}
 |D^{2} K(y)|\leq C_{0} |y|^{-n-\sigma-2}.
\end{equation}
In the limit as $\sigma \rightarrow 2$, this gives the Evans-Krylov theorem.

These extra kernel assumptions are used in an integration by parts argument to reduce the influence of the boundary data. This is an issue which is purely nonlocal in nature, and has no parallels in the second-order theory.

Our main purpose is to prove the interior $C^{1,\alpha}$ estimate while only assuming uniform ellipticity with respect to $\cL_{0}$, not $\cL_{1}$. This is accomplished in two stages. The first is an estimate on ``constant-coefficient'' equations:
\begin{thm} \label{part1intro}
 Let $\sigma>\sigma_{0}>0$, $I$ be translation-invariant and uniformly elliptic with respect to $\cL_{0}$, $u$ satisfy \eqref{BVP} in the viscosity sense with $f=0$ and $g\in C^{0,1}$. Then there is an $\alpha>0$ and $C$ depending only on $n, \lambda,\Lambda,$ and $\sigma_{0}$ such that $u\in C^{1,\alpha}(B_{1/2})$ and
 \begin{equation}
  \|u\|_{C^{1,\alpha}(B_{1/2})}\leq C\left(\|g\|_{C^{0,1}(B_{1}^{C})}+\|u\|_{L^{\infty}(\RR^{n})}\right).
 \end{equation}
\end{thm}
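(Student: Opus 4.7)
The approach exploits translation-invariance of $I$ via difference quotients, combined with the $C^{\alpha}$ estimate of \cite{CS1}. For small $h \in \RR^{n}$, set $w_{h}(x) := u(x+h) - u(x)$. Since $I$ is translation-invariant, both $u$ and $u(\cdot+h)$ solve the same equation in $B_{1-|h|}$, and \eqref{elliptic} gives
\[
 M^{-} w_{h} \leq 0 \leq M^{+} w_{h} \quad \text{in } B_{1-|h|}.
\]
Outside $B_{1}$ the Lipschitz assumption on $g$ gives $|w_{h}| \leq \|g\|_{C^{0,1}}|h|$, but on the boundary strip $B_{1} \setminus B_{1-|h|}$ only the crude bound $|w_{h}| \leq 2\|u\|_{L^{\infty}}$ is available. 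Without the $\cL_{1}$ regularity used in \cite{CS2}, one cannot integrate by parts to suppress this boundary contribution, and handling it is the crux of the argument.

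The plan is to iteratively improve the interior modulus of continuity of $u$ to a Lipschitz bound. Starting from the \Holder regularity of \cite{CS1} (which gives $|w_{h}| \leq C|h|^{\alpha}$ on the strip), I would compare $w_{h}$ in $B_{1-|h|}$ against the extremal-harmonic extensions of its exterior values. The crucial step is to show that such an extension, whose boundary data is supported on a strip of thickness $|h|$, is of order $|h|^{\beta}$ for some definite $\beta = \beta(\sigma_{0}) > 0$ in the interior $B_{3/4}$. This upgrades the modulus to $|h|^{\alpha + \beta}$ on a slightly smaller ball. Interleaving this bootstrap with the \Holder estimate, one reaches the Lipschitz endpoint $|w_{h}| \leq C|h|$ in $B_{3/4}$ in finitely many steps, with $C$ depending only on $n$, $\lambda$, $\Lambda$, and $\sigma_{0}$.

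Once $u$ is interior Lipschitz, the normalized quotients $v_{h} := w_{h}/|h|$ are uniformly bounded on $\RR^{n}$ and satisfy $M^{-} v_{h} \leq 0 \leq M^{+} v_{h}$ in $B_{1-|h|}$. The $C^{\alpha}$ estimate of \cite{CS1} then gives $\|v_{h}\|_{C^{\alpha}(B_{1/2})} \leq C$ uniformly in $h$; extracting a limit along $h = te$ as $t \to 0$ produces directional derivatives $\partial_{e}u \in C^{\alpha}(B_{1/2})$, which is equivalent to $u \in C^{1,\alpha}(B_{1/2})$ with the claimed estimate. The main obstacle I anticipate is the strip-to-interior extension estimate: one must construct a quantitative barrier without the kernel smoothness \eqref{kerassumption}, with constants uniform as $\sigma \to 2$.
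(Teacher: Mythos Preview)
Your overall framework---form increments $w_{h}=u(\cdot+h)-u(\cdot)$, observe $M^{-}w_{h}\le 0\le M^{+}w_{h}$ by translation invariance, and bootstrap the modulus exponent---is exactly the skeleton the paper uses. The divergence is entirely in how the boundary strip is handled, and there your plan has a genuine gap.

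You propose to control the strip contribution by a ``strip-to-interior extension estimate'': the $M^{\pm}$-extension of data of size $O(|h|^{\alpha})$ supported on an annulus of width $|h|$ should be $O(|h|^{\alpha+\beta})$ in $B_{3/4}$. This is the step you flag as the obstacle, and rightly so: with rough kernels (no \eqref{kerassumption}) there is no obvious barrier that gains a definite power of $|h|$ from the thinness of the strip, uniformly as $\sigma\to 2$. The nonlinear nature of $M^{\pm}$ prevents a Poisson-kernel decomposition, and the maximum principle alone only gives the trivial bound $|v|\le \sup_{\text{strip}}|v|$. You have not produced the barrier, and I do not see a short way to do it; so as written the inductive step does not close.

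The paper circumvents this entirely. Rather than seek $L^{\infty}$ control of $w_{h}/|h|^{\beta}$, it allows blowup at $\partial B_{1}$ and asks only for a \emph{weighted} bound
\[
|u(x+h)-u(x)|\le C|h|^{\beta}(1-|x|)^{s-\beta}, \qquad |x|<1-2|h|,
\]
with $s$ the boundary-regularity exponent. The point is that $(1-|x|)^{s-\beta}$ is still integrable on $B_{1}$, so $w_{h}/|h|^{\beta}\in L^{1}(\RR^{n},\omega)$ uniformly in $h$. The interior H\"older estimate in the form of \cite[Theorem~26]{CS2} needs only this $L^{1}(\omega)$ bound, not $L^{\infty}$, and returns $w_{h}/|h|^{\beta}\in C^{0,\alpha}(B_{1/2})$, i.e.\ $u\in C^{0,\beta+\alpha}(B_{1/2})$. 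A rescaling argument (their Lemma~\ref{part1lem2}) then upgrades the weighted estimate from exponent $\beta$ to $\beta+\alpha$ all the way up to the boundary, and one iterates. No barrier on the strip is ever needed.

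In short: your plan is correct in spirit but is missing precisely the idea that makes the argument go through without kernel regularity---trade $L^{\infty}$ control for $L^{1}(\omega)$ control via a boundary-blowup weight, and use the $L^{1}$ version of the H\"older estimate.
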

The second-order analogue of this is a corollary of the Krylov-Safanov theorem (see \cite[Corollary 5.7]{CC}). Theorem $13.1$ in \cite{CS1} proves a variant of this where the dependence is only on $\|g\|_{L^{\infty}}$, but an additional assumption is placed on the kernels (it would reduce to $\cL_{1}$ if made for every rescaling of $I$). In general, the arguments consist of forming incremental quotients
\[
 \frac{u(x+h)-u(x)}{|h|^{\beta}}
\]
and noticing that (because of translation invariance) they solve an elliptic equation with bounded right-hand side. Then an application of the \Holder regularity theorem shows these quotients are $C^{0,\alpha}$, so $u$ is $C^{\beta+\alpha}$. In the nonlocal case the difficult step is actually showing they are globally bounded uniformly in $h$, which in general won't be the case.

Our proof exploits the fact that it's enough for the incremental quotients to be integrable to obtain interior \Holder estimates. The method requires using scaling arguments to prove weighted \Holder estimates up to the boundary, and then using those estimates to show integrability of the incremental quotients. This proof is carried out in Section 3.

The second step is a perturbative argument that allows for arbitrary right-hand sides, variable coefficient kernels, and bounded boundary data. The conclusion is as follows (a precise statement for the variable-coefficient case, along with the proof, is given in section 4):
\begin{thm} \label{part2intro}
 Let $\sigma>\sigma_{0}>1$, $I$ be translation-invariant and uniformly elliptic with respect to $\cL_{0}$, $u$ satisfy \eqref{BVP} in the viscosity sense with $f,g\in L^{\infty}$. Then there is an $\alpha>0$ and $C$ depending only on $n, \lambda,\Lambda,$ and $\sigma_{0}$ such that $u\in C^{1,\alpha}(B_{1/2})$ and
 \begin{equation}
  \|u\|_{C^{1,\alpha}(B_{1/2})}\leq C\left(\|f\|_{L^{\infty}(B_{1})}+\|u\|_{L^{\infty}(\RR^{n})}\right).
 \end{equation}
\end{thm}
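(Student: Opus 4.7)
The proof is a perturbative extension of Theorem \ref{part1intro} via the standard improvement-of-flatness iteration. After a scaling reduction to $\|u\|_{L^\infty(\RR^n)} \leq 1$ and $\|f\|_\infty \leq \delta$ for a small universal $\delta$, and since $I$ is translation-invariant, it suffices to produce at the origin affine functions $\ell_k$ and radii $r_k = \rho^k$ with
\[
\|u - \ell_k\|_{L^\infty(B_{r_k})} \leq r_k^{1+\alpha}, \qquad \sup_{B_{r_k}} |\ell_{k+1} - \ell_k| \leq C r_k^{1+\alpha},
\]
for some universal $\alpha \in (0, \min(\bar\alpha, \sigma_0 - 1))$, with $\bar\alpha$ the exponent from Theorem \ref{part1intro}. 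The inductive step is closed by applying an improvement-of-flatness lemma (below) to the rescaled function $w(x) = (u - \ell_k)(r_k x)/r_k^{1+\alpha}$. The hypotheses of that lemma are verified as follows: by the inductive bounds at all previous scales, $\|w\|_{L^\infty(B_1)} \leq 1$ and $|w(y)| \leq C|y|^{1+\alpha}$ for $1 \leq |y| \leq r_k^{-1}$, while further out one only has $|w(y)| \lesssim r_k^{-(1+\alpha)}$; the far-field contribution to $Iw$ is bounded by $O(r_k^{\sigma - 1 - \alpha})$, hence arbitrarily small for $k$ large because $\sigma > \sigma_0 > 1 + \alpha$. The rescaled equation takes the form $\widetilde I w = \widetilde f$ with $\widetilde I$ still translation-invariant and $\cL_0$-elliptic, and $\|\widetilde f\|_\infty \leq \delta$.

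\textbf{Improvement-of-flatness lemma.} Claim: there are universal $\rho \in (0,1/4)$, $\delta > 0$, $C_0 > 0$ so that whenever $w \in L^\infty(\RR^n)$ satisfies $\|w\|_{L^\infty(B_1)} \leq 1$, $|w(y)| \leq |y|^{1+\alpha}$ for $|y| \geq 1$, and $I(w) = f$ in $B_1$ with $\|f\|_\infty \leq \delta$, then there is an affine $\ell$ with $|\ell(0)| + |\nabla \ell| \leq C_0$ and $\|w - \ell\|_{L^\infty(B_\rho)} \leq \rho^{1+\alpha}$. Proof by compactness: if not, take a sequence $(w_j, f_j, I_j)$ violating it with $\|f_j\|_\infty \to 0$. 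The $C^\alpha$ estimate of \cite{CS1} makes $\{w_j\}$ precompact in $C^0_{\mathrm{loc}}(B_1)$, and the polynomial tail bound together with $\sigma > 1 + \alpha$ provides enough equicontinuity of the tail integrals of $Iw_j$ for viscosity-solution stability to apply; a subsequence then yields a limit $w_\infty$ satisfying $I_\infty(w_\infty) = 0$ in $B_1$ in the viscosity sense, for some translation-invariant $I_\infty$ elliptic with respect to $\cL_0$, still with $|w_\infty(y)| \leq |y|^{1+\alpha}$ outside $B_1$.

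\textbf{Reduction to Theorem \ref{part1intro}.} To produce an affine approximator for $w_\infty$ build a comparison function: mollify $w_\infty$ at scale $\varepsilon$ on the annulus $B_R \setminus B_{1/2}$ and smoothly truncate to $0$ outside $B_R$, producing $g \in C^{0,1}(B_{1/2}^{C})$; let $\tilde w$ solve the clean problem $I_\infty(\tilde w) = 0$ in $B_{1/2}$ with $\tilde w = g$ on $B_{1/2}^{C}$. By Theorem \ref{part1intro}, $\tilde w \in C^{1,\alpha}(B_{1/4})$ with norm controlled by $\|g\|_{C^{0,1}} + \|\tilde w\|_\infty$, and its first-order Taylor polynomial $\ell$ at the origin is the candidate. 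The nonlocal maximum principle, together with weighted barrier estimates standard in the nonlocal setting, controls $\|w_\infty - \tilde w\|_{L^\infty(B_{1/4})}$ by $\|g - w_\infty\|_{L^\infty(B_{1/2}^{C} \cap B_R)}$ plus a kernel-weighted tail of $w_\infty$ over $B_R^{C}$. Taking first $R$ large (absorbing the tail using the polynomial bound and $\sigma > 1 + \alpha$), then $\varepsilon$ small (using the continuity of $w_\infty$ in $B_R$), and finally $\rho$ small drives $\|w_\infty - \ell\|_{L^\infty(B_\rho)}$ below $\rho^{1+\alpha}/2$; combined with $w_j \to w_\infty$ on $\overline{B_\rho}$ this contradicts the choice of $(w_j)$ for large $j$.

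\textbf{Main obstacle.} The delicate issue is the quantitative interplay in the comparison step: the Lipschitz norm of $g$ has to be finite so that Theorem \ref{part1intro} applies at all, yet $\|w_\infty - \tilde w\|_{L^\infty(B_{1/4})}$ must be small enough to beat the target scale $\rho^{1+\alpha}$. The only quantitative margin comes from the decay $\sigma - 1 - \alpha > 0$ played against the polynomial tail of $w$; this is precisely why the hypothesis $\sigma_0 > 1$ must be imposed, and it is the new difficulty relative to the base Theorem \ref{part1intro}.
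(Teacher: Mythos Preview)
Your compactness approach has a genuine circularity that prevents the argument from closing. In the improvement-of-flatness lemma the scale $\rho$ must be chosen \emph{universally}, i.e., before the contradiction sequence is fixed. But in your ``reduction to Theorem \ref{part1intro}'' step you mollify the limit $w_\infty$ at scale $\varepsilon$ to manufacture Lipschitz data $g$; the resulting $C^{1,\bar\alpha}$ bound on $\tilde w$ from Theorem \ref{part1intro} then depends on $\|g\|_{C^{0,1}}$, which blows up as $\varepsilon\to 0$ and in any case depends on $w_\infty$. To make $\|\tilde w-\ell\|_{L^\infty(B_\rho)}\le C(\varepsilon,w_\infty)\rho^{1+\bar\alpha}\le \tfrac12\rho^{1+\alpha}$ you must take $\rho$ small depending on $\varepsilon$ and $w_\infty$---yet $w_\infty$ came from a sequence of violators for that very $\rho$. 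The only way out is to prove a \emph{universal} $C^{1,\bar\alpha}$ bound on every possible limit $w_\infty$, but that is precisely Theorem \ref{part2intro} itself (with $f=0$), so the compactness reduction buys nothing: the limit still has merely bounded, non-Lipschitz tails, and Theorem \ref{part1intro} does not apply to it directly.

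The paper's proof is organized quite differently and avoids compactness altogether. The key observation is that the constant-coefficient estimate (in the form of Remark \ref{part1rem}) needs only a $C^{0,\beta}$-type control on the boundary data, not full Lipschitz, and yields a gain of a fixed exponent $\alpha$. The paper therefore runs the perturbative iteration repeatedly, each time proving an interior $C^{0,\beta}$ estimate for a larger $\beta$ (Lemma \ref{part2lem1}), and---crucially---tracks at each step an auxiliary condition (iii.$k$) giving $C^{0,\beta}$ regularity of the rescaled tails $w_k$. This condition is exactly what is needed to invoke Remark \ref{part1rem} on the approximating solution $v_k$, and it is propagated to step $k+1$ by applying the already-established $C^{0,\beta}$ interior estimate to $w_{k+1}$ on translated balls. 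After finitely many such passes $\beta$ is pushed close to $1$, and a final iteration with affine correctors gives $C^{1,\alpha}$. The device you are missing is precisely this bootstrapping of tail regularity through the inductive hypothesis; without it there is no quantitative way to feed non-smooth boundary data into Theorem \ref{part1intro}.
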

The argument is similar to the proof of Theorem 52 in \cite{CS2}. An approximation lemma is coupled with Theorem \ref{part1intro} in a sequence of rescalings. However, as Theorem \ref{part1intro} is weaker than Theorem 13.1 in \cite{CS1}, additional steps are required. In fact we repeat the iteration argument several times, each time using the conclusion of the previous step within the iteration to obtain improved regularity.

To prepare for the applications in later sections, in Section 5 we provide an explanation of a by-now standard approximation procedure for nonlocal operators. Introduced in \cite{CS3}, 
the idea is to replace kernels on a small ball around the origin by a multiple of the fractional Laplacian. The resulting equation will have classical solutions, and this can be used to justify proving a priori estimates rather than working with viscosity solutions directly.

In Section 6 we present an application to linear integro-differential equations of the following type:
\begin{equation}\label{linintro}
 Lu(x)= \int_{\RR^{n}}\frac{\delta_{2}u(x,y)a(x,y)}{|y|^{n+\sigma}}dy,
\end{equation}
where $\lambda\leq a(x,y)\leq \Lambda$. We show that if $a$ 
is \Holder in $x$ uniformly in $y$ and $f,g$ are \Holder as well, then $u$ is $C^{\sigma+\alpha}(B_{1/2})$. This is a natural generalization of the Schauder estimate to these nonlocal equations.

A more general version of this (including higher regularity) is proved in \cite{BFV}, but there additional regularity in $y$ assumptions are placed on the kernel. Our proof is based heavily on their technique. This type of estimate has also been studied with a combination of probabilistic and analytic methods in
\cite{B}. 
There the global problem ($Lu=f$ on the entire $\RR^{n}$) is shown to admit Schauder estimates for very general linear operators. A purely analytic approach is taken in \cite{DK}, where a local result is proven (see Corollary 1.7), but with dependence on the $C^{1,\alpha}$ norm of $g$. They also prove a variety of delicate results for the global problem. The novel aspect of our estimate is how it treats the dependence on the derivatives of the boundary data: rather than admit an explicit dependence (as in \cite{DK}) or attempt to integrate it away due to extra smoothness of the kernels (as in \cite{BFV}), 
we use the natural scaling of the operator to remove it.

In Section 7 this linear theory is applied to the question of higher regularity for a special class of nonlinear integro-differential operators:
\begin{equation}\label{nonlinex}
 I(u,x)=(2-\sigma)\int_{\RR^{n}}\frac{\rho({\delta_{2}u(x,y))}dy}{|y|^{n+\sigma}}.
\end{equation}
$\rho$ 
is assumed smooth and $0<\lambda<\rho'<\Lambda<\infty$ guarantees uniform ellipticity with respect to $\cL_{0}$. We prove an interior $C^{3,\alpha}$ estimate for solutions of the Dirichlet problem, assuming $f,g$ sufficiently smooth. This is an example of an operator for which ellipticity in the sense of \eqref{elliptic} is more natural than the interpretation as an Isaacs equation \eqref{Isaacs}. More importantly, it's a non-convex nonlinear operator which admits classical solutions. We feel there are a number of unanswered questions about operators of such form that could prove interesting: Are solutions to the Dirichlet problem smooth ($C^{\infty}$)? What kinds of (possibly nonlocal) second-order operators can be realized as limits of \eqref{nonlinex} as $\sigma \rightarrow 2$? What can be said about degenerate elliptic operators of this type, where the assumption $\lambda<\rho'<\Lambda$ is relaxed?

A natural question is whether the arguments in Sections 3 and 4 can be adapted to prove some analogue of the Evans-Krylov theorem in \cite{CS3}, but without assuming ellipticity with respect to $\cL_{2}$. For instance, are solutions to the Dirichlet problem for the maximal operator \eqref{maximal} classical? While this may be possible to show, it seems it requires substantial extra ideas. Notice that our method has two steps: first, a proof of an interior estimate under excessive conditions on the boundary data and second, a perturbative argument that exploits the fact that the amount of regularity assumed on the boundary data in the first step is superfluous relative to the scaling of the equation.

When approaching the Evans-Krylov theorem from this perspective, the main challenge in the first step will be to show that if $u$ satisfies the Dirichlet problem for $I=\sup_{\alpha} L_{\alpha}$ convex, then $L_{\alpha}u$ is integrable against the weight $(1+|x|)^{-n-\sigma}$. While the ideas we use here (weighted estimates up to the boundary and boundary regularity) may be helpful, it's not clear how. As for the second part, it would likely require a nonlinear Schauder theorem (see Section 8.1 in \cite{CC} for the second-order analogue), which would be an interesting result in its own right in the nonlocal context.

\section{Preliminaries}

In this section we collect some important definitions and results that will be used throughout the paper.

\subsection{Nonlocal Operators}

Let $\omega_{\sigma}$ be the following weight:
\begin{equation}\label{weight}
 \omega_{\sigma}(x)=\frac{1}{(1+|x|)^{n+\sigma}}.
\end{equation}
Associated to $\omega_{\sigma}$ is the space of functions integrable against the weight $L^{1}(\RR^{n},\omega_{\sigma})$. In general we'll be dealing with $\sigma>\sigma_{0}>0$, and in this case will use the notation $\omega \equiv \omega_{\sigma_{0}}$.

We'll use the notation $C^{1,1}(x)$ for functions $u$ for which there are quadratic polynomials $P,Q$ such that $Q(x)=u(x)=P(x)$ and $ Q\leq u\leq P$ on $B_{r}(x)$ for some small $r$. 

\begin{defn}\label{nonlocalopdef}
 A \emph{nonlocal operator} $I(u,x)$ on $B_{1}$ is a map, for each $x$, of $C^{1,1}(x)\cap L^{1}(\RR^{n},\omega_{\sigma})$ to $\RR$, such that for every open $\Omega \subset B_{1}$ and $u\in C^{2}(\Omega)$, $I(u,x)$ is continuous.
\end{defn}

This follows \cite[Definition 21]{CS2}; note the weak notion of continuity we assume for any such operator.

Such an operator is uniformly elliptic if it satisfies \eqref{elliptic} for all $u,v$ in $C^{1,1}(x)\cap L^{1}(\RR^{n},\omega_{\sigma})$. Viscosity solutions to \eqref{BVP} are defined in \cite[Definition 25]{CS2}.

We refer to a nonlocal operator as \emph{translation-invariant} if it commutes with translations: letting $\tau_{h}u(x)=u(x-h)$, $I$ is translation-invariant if $\tau_{-h}I(\tau_{h}u,x)=I(u,x)$ for every $x\in B_{1}$ and $u\in C^{1,1}(x)\cap L^{1}(\RR^{n},\omega_{\sigma})$.

For an operator which isn't translation-invariant, we can define the \emph{fixed-coefficient operators} $I_{x_{0}}(u,x)=\tau_{x-x_{0}}I(\tau_{x_{0}-x}u,x)$. These nonlocal operators are translation-invariant and ``freeze'' the $x$ dependence of  $I$ at $x_{0}$.

\subsection{Boundary Regularity}

We will require the following boundary regularity result:
\begin{thm}\label{boundaryreg}
Let $\sigma>\sigma_{0}>0$ and $u$ satisfy the following:
\begin{itemize}
 \item $M^{+}u>-1$ on $B_{1}$
 \item $M^{-}u<1$ on $B_{1}$
 \item $\|u\|_{L^{\infty}(B_{1})}< 1$
 \item $\|u\|_{L^{1}(\RR^{n},\omega)}<1$
 \item $|u(x)-u(y)|<|x-y|^{\beta}$ for $x\in \partial B_{1}$ and $y\in B_{2}\backslash B_{1}$.
\end{itemize}
Then there is an exponent $s>0$ and constant $C$ depending only on $\sigma_{0}, \lambda, \Lambda, n$, and $\beta$ such that 
\begin{equation}
 |u(x)-u(y)|\leq C|x-y|^{s} 
\end{equation}
for all $x\in \bar{B}_{1}$, $y\in B_{2}$.
\end{thm}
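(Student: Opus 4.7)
The plan is to combine the interior \Holder estimate from \cite{CS1} with a boundary iteration-of-oscillation argument. The interior estimate, applied at a point $z \in B_{1}$ on the ball $B_{d(z,\partial B_{1})/2}(z)$, already gives $|u(z)-u(w)| \leq C |z-w|^{\alpha_{0}} d(z,\partial B_{1})^{-\alpha_{0}+s}$ for nearby $w$, where $\alpha_{0}$ is the interior exponent. Together with the hypothesized \Holder control outside $B_{1}$, the remaining task is to show that for each $x_{0} \in \partial B_{1}$ one has
\[
 |u(x)-u(x_{0})| \leq C|x-x_{0}|^{s}
\]
for $x$ near $x_{0}$ (inside or outside $B_{1}$), with $s \leq \min(\alpha_{0},\beta)$ universal. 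Given this, combining the three estimates and using a standard gluing-through-$\partial B_{1}$ lemma produces the full conclusion.

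For the boundary estimate, fix $x_{0} \in \partial B_{1}$ and work at dyadic scales $r_{k}=2^{-k}$. Define $\omega_{k} = \operatorname{osc}_{B_{r_{k}}(x_{0})\cap B_{2}} u$ and aim to prove inductively that $\omega_{k} \leq K \mu^{k}$ for some universal $\mu \in (0,1)$ and $K$. At step $k$ consider the rescaled function
\[
 v(x) = \frac{2}{\omega_{k}}\bigl(u(x_{0}+r_{k}x)-m_{k}\bigr)-1,
\]
where $m_{k}$ is the midpoint of $u$ on $B_{r_{k}}(x_{0})\cap B_{2}$. By translation invariance and the scaling of $M^{\pm}$, $v$ satisfies $M^{+}v \geq -\varepsilon_{k}$, $M^{-}v \leq \varepsilon_{k}$ on $B_{1}\cap (r_{k}^{-1}(B_{1}-x_{0}))$, takes values in $[-1,1]$ on the relevant portion of $B_{1}$, grows no faster than $C|x|^{\beta-s}$ on the piece of $B_{1}$ lying outside the rescaled domain (by hypothesis), and has $L^{1}(\omega)$-tail bounded uniformly in $k$ after checking the scaling of the weight (this requires $s \leq \sigma_{0}\beta/\sigma$, which is why $s$ must be small).

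The oscillation improvement reduces to the following alternative: either $|\{v \geq 0\}\cap B_{1/2}| \geq \tfrac{1}{2}|B_{1/2}|$ or the analogous statement with $v$ replaced by $-v$. Assume the former; roughly half of $B_{1/2}$ lies inside the rescaled domain (since the boundary of $B_{1}$ at $x_{0}$ is nearly a hyperplane at small scale), so $|\{v\geq 0\}\cap B_{1/2}\cap \Omega_{k}| \geq c > 0$, where $\Omega_{k}$ is the rescaled interior. Add to $v$ a barrier $\psi$ supported near $x_{0}$ with $\psi(x_{0})=0$, $\psi \geq -1$, $M^{-}\psi \geq 1$ on $B_{1/2}\cap \Omega_{k}$, and controlled tail; a barrier of the form $\psi(x)= C[(d(x,\partial B_{1})/r_{k})_{+}^{s}-1]\chi(x)$, truncated to be zero outside $B_{2}$, works once $s$ is chosen small enough to handle the computation of $M^{-}$ (this is the standard computation behind the nonlocal Hopf principle). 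Then $v+\psi$ is a supersolution on $\Omega_{k}\cap B_{1/2}$ with large positive measure set, and the weak Harnack / $L^{\varepsilon}$ estimate from \cite{CS1} yields $v+\psi \geq \theta$ on $B_{1/4}$, hence $v \geq \theta - \psi|_{B_{1/4}} \geq \theta/2$ near $x_{0}$. This gives $\omega_{k+1} \leq (1-\theta/4)\omega_{k} + C\varepsilon_{k} + C\cdot(\text{tail correction})$; provided $\mu$ is chosen in $(1-\theta/4, 1)$ and $s$ is small enough, the corrections are absorbed and the inductive bound closes.

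The main obstacle is controlling the tail contribution and the outside-of-$B_{1}$ part of $v$ in a scale-invariant way: at each step the rescaled function is only bounded by $C|x|^{\beta-s}$ on a large set, so the barrier, the $L^{1}(\omega)$ norm, and the $M^{\pm}$ computation of $\psi$ must all be arranged so that their contributions to the oscillation estimate at scale $r_{k}$ decay geometrically with $k$. This forces $s$ to depend on all of $\beta,\sigma_{0},\lambda,\Lambda,n$, and in particular $s < \beta$. Once $s$ is fixed, the above iteration produces the claimed \Holder bound at $x_{0}$, uniform in $x_{0}\in\partial B_{1}$, and combining with the interior and exterior \Holder estimates yields the theorem.
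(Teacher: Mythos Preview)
Your plan is a workable route to boundary \Holder regularity, but it is not the one the paper takes. The paper's argument is a two-line reduction: multiply $u$ by a smooth cutoff $\eta$ that is $1$ on $B_{3/2}$ and $0$ outside $B_{2}$, observe that $M^{\pm}(\eta u)$ on $B_{1}$ differs from $M^{\pm}u$ by a term controlled by $\|u\|_{L^{1}(\RR^{n},\omega)}$ (since $(1-\eta)u$ is supported away from $B_{1}$), and then invoke the already-proved boundary regularity result \cite[Theorem 32]{CS2} with modulus $\rho(z)=C|z|^{\beta}$. The point is that the localized function $\eta u$ is globally bounded and still \Holder across $\partial B_{1}$, so the existing black-box applies directly; the only thing left is to check that the abstract modulus produced there is in fact \Holder when the input modulus is.

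What you are outlining is essentially a reproof of that black-box: the dyadic oscillation iteration at a boundary point, with tails controlled inductively by the previous scales and a barrier of Hopf type to get the improvement-of-oscillation step. This is the standard machinery behind results like \cite[Theorem 32]{CS2}, and carried out carefully it would yield the theorem. Two places in your writeup deserve attention if you pursue it: first, the growth control on the rescaled $v$ inside $B_{1}$ but outside $B_{r_{k}}(x_{0})$ does not come from the hypothesis (which only speaks to $B_{2}\setminus B_{1}$) but from the inductive bounds $\omega_{j}\leq K\mu^{j}$ on earlier annuli, giving $|v(x)|\lesssim |x|^{-\log_{2}\mu}$; second, the barrier/weak-Harnack step needs the signs sorted out (one typically applies weak Harnack to $1+v$ or $1-v$ after subtracting a subsolution barrier, not adding one). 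The paper's reduction buys brevity and avoids redoing this; your approach is more self-contained and makes the dependence of $s$ on the data explicit.
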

\begin{proof}[Sketch of proof]
Let $\eta$ be a smooth cutoff with $\eta \equiv 1$ in $B_{3/2}$ and $\eta \equiv 0$ outside $B_{2}$. Then $|\eta u(x)-\eta u(y)|\leq C|x-y|^{\beta}$ for all $x\in \partial B_{1}$, $y\in B_{2}\backslash B_{1}$. Using \eqref{elliptic},
\[
M^{+}\eta u (x)\geq M^{+}u(x)-M^{+}(1-\eta)u(x)\geq -1-C\|u\|_{L^{1}(\RR^{n},\omega)}
\]
for $x$ in $B_{1}$. A similar computation gives $M^{-}u(x)<C$, so we can apply \cite[Theorem 32]{CS2} with $\rho(z)=C|z|^{\beta}$. That the $\tilde{\rho}$ produced in that theorem is actually \Holder follows by going though the proof and computing the modulus explicitly in each step.
\end{proof}

\subsection{Approximation and Scaling}

In the perturbative argument it will be necessary to have notions of ``closeness'' between nonlocal operators, as well as scale-invariant versions of this notion. We summarize the notation used in \cite{CS2}. A norm on nonlocal operators of order $\sigma$ is given by
\begin{align*} 
 \|I\|=\sup\{\frac{I(u,x)}{1+M}\vert & x\in B_{1}, u\in C^{2}(x), \|u\|_{L^{1}(\RR^{n}.\omega)}\leq M, \\
 & |u(x)-u(y)-(x-y)\nabla u(x)|\leq M|y|^{2} \text{for } y\in B_{1}(x)\}.\\
\end{align*}

The rescaling of the nonlocal operator $I$ is given by $I_{\mu, \gamma}(u,x)=\gamma^{\sigma} \mu I(\tilde{u}/\mu,\gamma x)$ where $\tilde{u}(x)=u(x/\gamma)$. Note that as the extremal operators $M^{+},M^{-}$ are invariant under these scalings, if $I$ is uniformly elliptic with respect to $\cL_{0}$ then so is $I_{\mu, \gamma}$. The scale-invariant norm of $I$ is given by
\begin{equation} \label{normscaled}
 \|I\|_{\sigma}=\sup_{\gamma\leq 1}\|I_{1,\gamma}\|.
\end{equation}

\section{The Constant-Coefficient Estimate}

The aim of this section is to prove Theorem \ref{part1intro}. In fact, we will prove a stronger result with milder but more technical hypotheses. These hypotheses have desirable scaling properties that will be useful here and in the following section. To facilitate the exposition, we introduce the following seminorm for $\beta \leq 1$:
\begin{equation}\label{Aseminorm}
 [u]_{\cA^{\beta}}=\sup_{|h|<1/8}|h|^{-\beta}\int_{\RR^{n}\backslash B_{1+2|h|}}\frac{|u(y+h)-u(y)|dy}{|y|^{n+\sigma_{0}}}.
\end{equation}
This quantity measures the ``$\beta$ derivative'' of $u$ in an $L^{1}$ sense. It is always controlled by the $C^{0,\beta}(B_{1}^{C})$ seminorm. More generally, it is controlled by this weighted \Holder norm, provided $p<\sigma_{0}$ :
\begin{equation} \label{WeightedHolder}
\sup_{|h|<1/8, |x|>1+2|h|}|h|^{-\beta}|u(x+h)-u(x)||x|^{-p}.
\end{equation}

We will also use the following notation for the oscillation:
\begin{equation}
 O[u](\Omega)=\sup\{|u(x)-u(y)|\vert x,y\in \Omega\}.
\end{equation}

\begin{thm}\label{part1}
 Let $\sigma>\sigma_{0}>0$, $I$ be translation-invariant and uniformly elliptic with respect to $\cL_{0}$. Let $u$ satisfy the BVP \eqref{BVP} in the viscosity sense, and assume the following:
\begin{itemize}
 \item[(A)] $\|f\|_{C^{0,1}(B_{1})}<1$
 \item[(B)] $\|u\|_{L^{1}(\RR^{n},\omega)}+\|u\|_{L^{\infty}(B_{1})}<1$
 \item[(C)] $[u]_{\cA^{1}}<1$
 \item[(D)] $\|u\|_{C^{0,1}(B_{2}\backslash B_{1})}<1$
 \item[(E)] $|I(0)|<1$.
\end{itemize}
Then there is an exponent $\alpha$ and constant $C$ depending only on $\sigma_{0}, \lambda, \Lambda,$ and $n$ such that $u\in C^{1,\alpha}(B_{1/2})$ and
\begin{equation}\label{part1c}
 \|u\|_{C^{1,\alpha}(B_{1/2})}\leq C.
\end{equation}
\end{thm}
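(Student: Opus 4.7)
\medskip

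\noindent\textbf{Proof proposal.} The strategy follows the two-step outline in the introduction: first obtain integrability of incremental quotients via weighted \Holder estimates up to $\partial B_1$, then use the translation invariance of $I$ to show these quotients solve an elliptic equation for which an interior \Holder estimate is available. The hypotheses (A)--(E) are designed precisely so that these two steps can be iterated, upgrading the \Holder exponent on $u$ at each iteration until it exceeds $1$.

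\medskip

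\noindent\emph{Step 1: global \Holder regularity.} From ellipticity and (E) we get $M^{+}u \geq -\|f\|_{L^{\infty}}-|I(0)| \geq -C$ and $M^{-}u \leq C$ in $B_{1}$. Hypothesis (D) provides the required modulus on $\partial B_{1} \times (B_{2}\setminus B_{1})$, and (B) gives the $L^{\infty}$ and $L^{1}(\omega)$ bounds. Theorem \ref{boundaryreg} then yields $u \in C^{0,s}(\overline{B_{1}})$ for some universal $s>0$.

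\medskip

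\noindent\emph{Step 2: weighted \Holder estimates up to the boundary.} Given $x_{0}\in B_{1}$ at distance $r=d(x_{0},\partial B_{1})$, I rescale $\tilde{u}(x)=u(x_{0}+rx)$ on $B_{1}$. Because $M^{\pm}$ are scale-invariant under the nonlocal scaling and assumptions (A)--(E) rescale appropriately (with factors of $r$), one can reapply Step 1 to $\tilde{u}$ on each such ball and obtain a weighted estimate of the form
\begin{equation*}
  [u]_{C^{0,s}(B_{r/2}(x_{0}))} \leq C r^{-(1-s)}
\end{equation*}
or, equivalently, an estimate on the second difference quotients that integrates against $\omega$. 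This is the technical heart of the proof; the boundary regularity theorem is applied at every scale, and the universal constant in Theorem \ref{boundaryreg} must be tracked.

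\medskip

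\noindent\emph{Step 3: integrability of incremental quotients.} Define
\begin{equation*}
  v_{h}(x) = \frac{u(x+h)-u(x)}{|h|^{\beta}}, \qquad \beta \in (0,1].
\end{equation*}
Inside $B_{1}$ the weighted estimate from Step 2 bounds $|v_{h}(x)|$ by $C\,d(x,\partial B_{1})^{-(1-s)}$ for $\beta\leq s$ and $|h|$ small relative to $d(x,\partial B_{1})$; this is integrable on $B_{1}$. The contribution from the annulus $B_{2}\setminus B_{1}$ is handled directly by (D), and the tail $\RR^{n}\setminus B_{2}$ is controlled by hypothesis (C) against the weight $\omega_{\sigma_{0}}$. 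Putting these together yields
\begin{equation*}
  \|v_{h}\|_{L^{1}(\RR^{n},\omega)} \leq C
\end{equation*}
uniformly in small $h$ and in $\beta \leq s$.

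\medskip

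\noindent\emph{Step 4: the equation for $v_{h}$ and iteration to $C^{1,\alpha}$.} Translation invariance gives $I(u(\cdot+h),x)=f(x+h)$, so by \eqref{elliptic}
\begin{equation*}
  M^{-}(u(\cdot+h)-u)(x) \leq f(x+h)-f(x) \leq |h|,
\end{equation*}
and symmetrically for $M^{+}$. Dividing by $|h|^{\beta}$, the function $v_{h}$ satisfies
\begin{equation*}
  M^{-}v_{h} \leq |h|^{1-\beta}, \qquad M^{+}v_{h} \geq -|h|^{1-\beta}
\end{equation*}
in $B_{1/2}$. The $L^{1}(\omega)$-based interior \Holder estimate (a version of the Krylov--Safanov result in \cite{CS1} formulated with the weighted tail rather than an $L^{\infty}$ bound) then gives
\begin{equation*}
  [v_{h}]_{C^{0,\alpha}(B_{1/4})} \leq C\bigl(\|v_{h}\|_{L^{1}(\RR^{n},\omega)} + |h|^{1-\beta}\bigr) \leq C,
\end{equation*}
uniformly in $h$. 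Since uniform $C^{0,\alpha}$ control on $v_{h}$ for $|h|$ small means $u\in C^{\beta+\alpha}(B_{1/4})$, I can feed the improved regularity back into Step 2 (the weighted estimate now holds with exponent $\beta+\alpha$) and repeat. After finitely many rounds the exponent crosses $1$, at which point one more application yields $u\in C^{1,\alpha}(B_{1/2})$.

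\medskip

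\noindent\emph{Main obstacle.} The delicate step is Step 3: showing that the weighted estimates genuinely produce an $L^{1}(\omega)$ bound on $v_{h}$ that is \emph{uniform} in $h$, with universal constant independent of the particular rescaling, and that the boundary regularity constant from Theorem \ref{boundaryreg} behaves well enough under scaling to iterate. Relatedly, the interior \Holder estimate of Step 4 must be stated in the $L^{1}(\omega)$ form rather than the usual $L^{\infty}$ form, which is why the hypotheses (C), (D) are included in place of a plain global $L^{\infty}$ assumption.
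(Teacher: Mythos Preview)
The overall architecture is right---alternate between an interior exponent gain for incremental quotients and a weighted \Holder estimate up to $\partial B_{1}$---but Step~2 misidentifies its own mechanism, and this is a genuine gap. Reapplying boundary regularity (Theorem~\ref{boundaryreg}) at each scale cannot raise the \Holder exponent: it returns the same universal $s$ every time, so the displayed bound $[u]_{C^{0,s}(B_{r/2}(x_{0}))}\le C r^{-(1-s)}$ is actually \emph{weaker} than the global $C^{0,s}$ bound you already have from Step~1, and it never becomes a weighted $C^{0,\beta}$ estimate for any $\beta>s$. Since Step~3 only yields a uniform $L^{1}(\omega)$ bound on $v_{h}$ for $\beta$ up to the exponent available in the weighted estimate, your iteration stalls at $\beta=s$.

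In the paper the upgrade is accomplished not by boundary regularity but by reapplying the \emph{interior improvement} (your Steps~3--4, i.e.\ Lemma~\ref{part1lem1}) to a rescaled copy of $u$ centered near the boundary: set $\tilde u(x)=u(x/R)$, take $|x_{0}|=R-2$, and apply Lemma~\ref{part1lem1} to $R^{s}\tilde u$ on $B_{1}(x_{0})$; scaling \eqref{part1lem1c1} back yields $|u(x+h)-u(x)|\le C|h|^{\beta+\alpha}(1-|x|)^{s-\beta-\alpha}$, which is exactly the input~(iii) needed for the next round. The real technical heart therefore lies here, not in your Step~3: one must verify hypothesis~(v) of Lemma~\ref{part1lem1}, namely $[\,R^{s}\tilde u_{x_{0}}\,]_{\cA^{\beta}}\le C$, for the rescaled function. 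That tail integral sees four regimes---the interior of $B_{R}$ where only the previous weighted bound $|h|^{\beta}(1-|\cdot|)^{s-\beta}$ is available, a thin shell around $\partial B_{R}$, the annulus $B_{2R}\setminus B_{R}$ handled by~(D), and the far exterior where the original $\cA^{\beta}$ bound rescales---and controlling the interior and shell pieces near the tangent point requires an explicit surface-integral estimate (the paper carries it out in stereographic coordinates). By contrast your Step~3 is immediate once the weighted estimate is in hand: the interior contribution is just $\int_{B_{1-2|h|}}(1-|y|)^{s-1}\,dy<\infty$. So the ``main obstacle'' paragraph has the difficulty located in the wrong step.
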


The proof proceeds by iterating two lemmas. The first uses a weighted \Holder estimate up to the boundary to prove higher regularity in the interior. The second rescales the conclusion of the first in order to obtain a weighted estimate all the way up to the boundary.

\begin{lem}\label{part1lem1}
 Let $I$ be as in Theorem \ref{part1}, $s\leq \beta \leq 1$, and $u$ satisfy the BVP \eqref{BVP}. Assume the following:
\begin{itemize}
 \item[(i)] $\|f\|_{C^{0,\beta}(B_{1})}<1$
 \item[(ii)] $\|u\|_{L^{1}(\RR^{n},\omega)}<\infty$
 \item[(iii)] $|u(x+h)-u(x)|<|h|^{\beta}(1-|x|)^{s-\beta}$ for $|x|<1-2|h|$.
 \item[(iv)] $O[u](B_{2})<1$
 \item[(v)] $[u]_{\cA^{\beta}}<1$
\end{itemize}
Then there is an exponent $\alpha$ and constant $C$ depending only on $s, \sigma_{0}, \lambda, \Lambda,$ and $n$ such that if $\beta+\alpha<1$, $u\in C^{0,\beta+\alpha}(B_{1/2})$ and
\begin{equation} \label{part1lem1c1}
 [u]_{C^{0,\beta+\alpha}(B_{1/2})}\leq C.
\end{equation}
If $\beta+\alpha>1$. we conclude instead that
\begin{equation} \label{part1lem1c2}
 [\nabla u]_{C^{0,\beta+\alpha-1}(B_{1/2})}\leq C.
\end{equation}
\end{lem}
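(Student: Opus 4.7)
The plan is to prove higher interior regularity for $u$ by applying the interior \Holder estimate of \cite{CS1} to the incremental quotient
\[
v_h(x) = \frac{u(x+h) - u(x)}{|h|^\beta}, \qquad |h| \leq \tfrac{1}{16}.
\]
Since $I$ is translation invariant, $u(\cdot+h)$ solves $I(u(\cdot+h),x) = f(x+h)$ on $B_{1-|h|}$. By ellipticity with respect to $\cL_0$ and hypothesis (i),
\[
M^- v_h(x) \leq |h|^{-\beta}\bigl(f(x+h) - f(x)\bigr) \leq \|f\|_{C^{0,\beta}(B_1)} \leq 1,
\]
and similarly $M^+ v_h(x) \geq -1$ in the viscosity sense on $B_{1-|h|}$.

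To invoke the \Holder regularity theorem on an interior ball, I need bounds on $\|v_h\|_{L^\infty(B_{3/4})}$ and $\|v_h\|_{L^1(\RR^n,\omega)}$ uniform in $h$. The first is immediate from (iii): on $B_{3/4}$ we have $1-|x| \geq 1/4$, so $|v_h(x)| \leq 4^{\beta-s}$, a constant controlled by $s$ (since $\beta \leq 1$). For the weighted $L^1$ norm I split $\RR^n$ into four regions. On $B_{3/4}$ the $L^\infty$ bound suffices. On $B_{1-2|h|}\setminus B_{3/4}$, (iii) gives $|v_h(x)| \leq (1-|x|)^{s-\beta}$, and since $s-\beta > -1$ the integral converges uniformly in $h$. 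On the thin shell $B_{1+2|h|}\setminus B_{1-2|h|}$ of measure $\sim |h|$, the crude bound $|v_h| \leq |h|^{-\beta} O[u](B_2) \leq |h|^{-\beta}$ from (iv) contributes $\lesssim |h|^{1-\beta}$, bounded since $\beta \leq 1$. Finally, on $\RR^n\setminus B_{1+2|h|}$ hypothesis (v) is exactly what is needed.

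With $|M^\pm v_h| \leq 1$ in place on $B_{1-|h|}$ together with the uniform $L^\infty$ and $L^1(\omega)$ bounds, the interior \Holder regularity theorem, applied via a standard covering of $B_{1/2}$ by balls of radius on the order of $1/16$, yields $[v_h]_{C^{0,\alpha}(B_{1/2})} \leq C$ with $\alpha$ universal and independent of $h$. Passing back to $u$ is via second differences: for $x$, $x\pm h \in B_{1/2}$,
\[
\bigl|u(x+h) + u(x-h) - 2u(x)\bigr| = |h|^\beta \bigl|v_h(x) - v_h(x-h)\bigr| \leq C|h|^{\beta+\alpha}.
\]
A uniform second-difference bound of this strength is equivalent, by the standard characterization, to $u \in C^{0,\beta+\alpha}(B_{1/2})$ when $\beta+\alpha < 1$ (yielding \eqref{part1lem1c1}), and to $u \in C^{1,\beta+\alpha-1}(B_{1/2})$ when $1 < \beta+\alpha < 2$ (yielding \eqref{part1lem1c2}).

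The main technical obstacle is securing the $L^1(\RR^n,\omega)$ bound on $v_h$ uniformly as $|h| \to 0$. Near $\partial B_1$ the quotient $v_h$ may blow up like $(1-|x|)^{s-\beta}$, and on the shell $B_{1+2|h|}\setminus B_{1-2|h|}$ hypothesis (iii) does not apply at all. Hypotheses (iv) and (v) are engineered precisely to cover these two troublesome regions — the shell via the oscillation bound, the far field via the seminorm $[\cdot]_{\cA^\beta}$ — with the restrictions $\beta \leq 1$ and $s > 0$ ensuring that each of the four regimes contributes a bound independent of $h$.
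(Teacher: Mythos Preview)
Your proposal is correct and follows essentially the same approach as the paper: bound the incremental quotient in $L^{1}(\RR^{n},\omega)$ by splitting the domain into an interior piece (controlled by (iii) and the integrability of $(1-|x|)^{s-\beta}$), the thin shell near $\partial B_{1}$ (controlled by (iv)), and the exterior (controlled by (v)), then apply the interior \Holder estimate and the standard incremental-quotient lemma. The only differences are cosmetic---you split into four regions rather than three and phrase the final step via second differences, whereas the paper cites \cite[Lemma 5.6]{CC} directly.
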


\begin{proof}
For $|x|<3/4$ and $|h|<1/8$ consider the difference quotient
\[
 w_{h}(x)=\frac{u(x+h)-u(x)}{|h|^{\beta}}.
\]
By \cite[Theorem 5.9]{CS1} and (i), we have
\[
 M^{+}w_{h}(x)\geq \frac{f(x+h)-f(x)}{|h|^{\beta}}\geq -[f]_{C^{0,\beta}}>-1.
\]
(Notice that while \cite[Theorem 5.9]{CS1} requires $u$ globally bounded, its proof goes through under our assumption (ii) instead). Likewise, $M^{-}w_{h}(x)<1$.
If we could show  
\begin{equation} \label{goal1}
\|w_{h}\|_{L^{1}(\RR^{n},\omega)}<C, 
\end{equation}
an application of \cite[Theorem 26]{CS2} would give $\|w_{h}\|_{C^{0,\alpha}(B_{1/2})}<C$. A standard lemma then gives the conclusions \eqref{part1lem1c1} and \eqref{part1lem1c2} (see \cite[Lemma 5.6]{CC}).

We proceed to estimate \eqref{goal1} by breaking the domain of integration into three parts:
\begin{align*}
 \|w_{h}\|_{L^{1}(\RR^{n},\omega)} &= \int_{\RR^{n}}\frac{|u(y+h)-u(y)|dy}{|h|^{\beta}(1+|y|)^{n+\sigma_{0}}} \\
 &=\int_{B_{1-2|h|}}+\int_{B_{1+2|h|}\backslash B_{1-2|h|}}+\int_{B_{1+2|h|}^{C}} \\
 &=I_{1}+I_{2}+I_{3}.\\
\end{align*}
The first part is estimated by the weighted estimate (iii):
\[
 I_{1}\leq \int_{B_{1-2|h|}}\frac{|u(y+h)-u(y)|dy}{|h|^{\beta}}\leq \int_{B_{1-2|h|}}(1-|y|)^{s-1}dy\leq C/s.
\]
The second integral is over a region of volume $C|h|$:
\[
 I_{2}\leq C|h|O[u](B_{2})|h|^{-\beta}\leq C
\]
and $I_{3}<C$ immediately from assumption (v). This gives \eqref{goal1} and concludes the proof.
\end{proof}

\begin{lem}\label{part1lem2}
 Take $s\leq \beta < 1$ and make the same assumptions as in Lemma \ref{part1lem1} as well as:
\begin{itemize}
 \item[(vi)] $[u]_{C^{0,\beta}(B_{2}\backslash B_{1})}<1$
 \item[(vii)] $[u]_{C^{0,s}(B_{2})}<1$
\end{itemize}
Then we further have the estimate
\begin{equation} \label{part1lem2c1}
 |u(x+h)-u(x)|<C|h|^{\BA}(1-|x|)^{s-\BA} \text{ for } |x|<1-2|h|.
\end{equation}
\end{lem}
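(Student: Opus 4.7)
The plan is to carry out a scaling argument that reduces the weighted boundary estimate to the interior H\"older estimate established in Lemma \ref{part1lem1}. Fix $x \in B_1$ and set $r = 1 - |x|$; the hypothesis $|x| < 1 - 2|h|$ appearing in the conclusion is exactly $|h| < r/2$, so it is enough to produce the claimed bound for such $h$. Define the rescaled function
\[
v(y) = \frac{u(x + ry) - u(x)}{A r^s},
\]
with $A \geq 1$ a universal constant to be chosen. Because $I$ is translation-invariant and $\cL_0$ is scale-invariant, $v$ solves an equation of the same form on $B_1$ in the $y$-variable, with right-hand side $\tilde f(y) = r^{\sigma - s} f(x + ry)/A$. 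The strategy is to verify that $v$ meets hypotheses (i)--(v) of Lemma \ref{part1lem1} with the same $\beta$ and $s$, apply that lemma, and then unscale.

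Without loss of generality we may assume $s \leq \sigma_0$, so that all incidental non-negative powers of $r$ produced by the rescaling (such as $r^{\sigma - s}$) are bounded by $1$; then (i) follows from (i) of Theorem \ref{part1} after choosing $A$ large enough to absorb constants. Hypothesis (iv) is immediate from (vii): $O[v](B_2) \leq (Ar^s)^{-1} O[u](B_{2r}(x)) \leq 4^s / A$. For (iii), the substitution $z = x + ry$, $\tilde h = rh'$ converts the required inequality for $v$ into
\[
|u(z + \tilde h) - u(z)| \leq A |\tilde h|^\beta (r - |z - x|)^{s - \beta} \quad \text{for } |z - x| < r - 2|\tilde h|,
\]
which follows from (iii) for $u$, the triangle inequality $1 - |z| \geq r - |z - x|$, and the sign $s - \beta \leq 0$.

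The delicate verifications are (ii) and (v), because the exterior of the $y$-unit ball corresponds to the $u$-region $\{|z - x| > r\}$, which straddles $\partial B_1$. After the change of variables each integral decomposes into three pieces: the annulus $B_1 \setminus B_r(x)$, handled by (vii); the shell $B_2 \setminus B_1$, handled by (vi); and the outer tail $\RR^n \setminus B_2$, controlled by (v) for $u$. I expect the main technical obstacle to lie in the bookkeeping here: one must track the Jacobian $r^n$ and the fact that the weight $(1 + |y|)^{-n - \sigma_0}$ is replaced by $(r + |z - x|)^{-n - \sigma_0}$, and check that each contribution carries an overall factor $r^p$ with $p \geq 0$ (which holds provided $s \leq \sigma_0$ and $\beta \geq 0$), so that it can be absorbed into $A$.

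Once (i)--(v) are verified, Lemma \ref{part1lem1} gives $[v]_{C^{0, \beta + \alpha}(B_{1/2})} \leq C$ when $\beta + \alpha \leq 1$, and $[\nabla v]_{C^{0, \beta + \alpha - 1}(B_{1/2})} \leq C$ when $\beta + \alpha > 1$; in the latter case a standard interpolation with the $L^\infty$ bound on $v$ from (iv) upgrades this seminorm bound to a uniform Lipschitz estimate for $v$ on $B_{1/4}$. Translating back through the definition of $v$ then produces
\[
|u(x + h) - u(x)| \leq C r^{s - \BA} |h|^{\BA} \quad \text{for } |h| < r/4,
\]
and the remaining range $r/4 \leq |h| < r/2$ is handled directly by hypothesis (vii), yielding the full conclusion \eqref{part1lem2c1}.
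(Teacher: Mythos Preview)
Your overall strategy---rescale so the chosen boundary point sits at the center of a unit ball, verify (i)--(v) for the rescaled function, apply Lemma~\ref{part1lem1}, and unscale---is exactly the paper's approach. The treatment of (i), (iii), (iv) and the final unscaling are fine.

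The gap is in your verification of (v). You propose that on the annulus $B_1\setminus B_r(x)$ the increment $|u(z+\tilde h)-u(z)|$ is ``handled by (vii)''. But (vii) only gives $|u(z+\tilde h)-u(z)|\le |\tilde h|^{s}$, and after dividing by $|\tilde h|^{\beta}$ this contributes a factor $|\tilde h|^{s-\beta}$ which blows up as $\tilde h\to 0$ whenever $\beta>s$. Since the region of integration does not shrink with $\tilde h$, nothing rescues this. The correct tool on this region is hypothesis (iii)---the weighted $C^{0,\beta}$ estimate $|u(z+\tilde h)-u(z)|\le |\tilde h|^{\beta}(1-|z|)^{s-\beta}$---and indeed this is the crux of the bootstrap: the weighted estimate at level $\beta$ is what feeds back into $[v]_{\cA^\beta}$ and permits Lemma~\ref{part1lem1} to gain $\alpha$ more. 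Using (iii) leaves one with the integral
\[
\int_{\{|z-x|>r,\ |z|<1-2|\tilde h|\}}\frac{(1-|z|)^{s-\beta}}{|z-x|^{n+\sigma_0}}\,dz,
\]
whose uniform boundedness in $r$ is not automatic; the paper handles it by an explicit polar/stereographic computation.

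Your three-region split also misses a thin shell of width $\sim|\tilde h|$ straddling $\partial B_1$, where neither (iii) (which requires $|z|<1-2|\tilde h|$) nor (vi) (which requires both $z$ and $z+\tilde h$ in $B_2\setminus B_1$) applies. The paper treats this as a separate piece; there (vii) \emph{does} suffice, because the $|\tilde h|^{s-\beta}$ loss is offset by the shell's measure $\sim|\tilde h|$. So the correct decomposition for (v) has four parts, not three: interior (use (iii)), thin shell around $\partial B_1$ (use (vii)), annulus $B_2\setminus B_{1+2|\tilde h|}$ (use (vi)), and far tail (use (v) for $u$).
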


\begin{proof}
 Fix $R$ large and consider the rescaled function $\tilde{u}(x)=u(x/R)$. This satisfies a rescaled equation 
\[
I_{1,1/R}(\tilde{u},x)=R^{-\sigma}f(x/R)\equiv \tilde{f}.
\]
Take $|x_{0}|=R-2$; we wish to apply Lemma \ref{part1lem1} to $R^{s}\tilde{u}$ on $B_{1}(x_{0})$. To that end, we check assumptions (i)-(v) on the rescaled equation. 

For (i), use 
\[
\|\tilde{f}\|_{C^{0,\beta}}\leq R^{-\sigma}\|f\|_{C^{0,\beta}}\leq R^{-\sigma}. 
\]
(ii) is immediate since the actual value of the norm is irrelevant. For (iii), we use the fact that $|h|<1/2\leq(R-|x|)/2$ (since ($R-|x_{0}|=2$):
\[
 |\tilde{u}(x+h)-\tilde{u}(x)|\leq |h/R|^{\beta}(1-|x/R|)^{s-\beta}\leq R^{-s}|h|^{\beta}(R-|x|)^{\beta}\leq CR^{-s}|h|^{\beta},
\]
using the fact we assumed (iii) on the non-scaled equation. (iv) can be estimated by means of assumption (vii):
\[
 O[\tilde{u}_{x_{0}}](B_{2})\leq O[u](B_{2/R}(x_{0}/R))\leq CR^{-s}
\]
where $\tilde{u}_{x_{0}}(x)=\tilde{u}(x-x_{0})$.

Assumption (v) takes more work to check. We split each integral into four regions:
\begin{align*}
 &\int_{|y-x_{0}|>1+2|h|}\frac{|\tilde{u}(y+h)-\tilde{u}(y)|}{|y-x_{0}|^{n+\sigma_{0}}}dy \\
&\leq \int_{B_{R-2|h|}(0)\backslash B_{1}(x_{0})}
+\int_{B_{R+2|h|}(0)\backslash B_{R-2|h|}(0)}
+\int_{B_{2R}(0)\backslash B_{R+2|h|}(0)}
+\int_{B_{2R}^{C}(0)} \\
&= I_{1}+I_{2}+I_{3}+I_{4}.\\
\end{align*}
$I_{4}$ is estimated by rescaling assumption (v):
\begin{align*}
 I_{4} &\leq \int_{|y|>2R}\frac{|\tilde{u}(y+h)-\tilde{u}(y)|}{|y|^{n+\sigma_{0}}}\left(\frac{|y|}{|y-x_{0}|}\right)^{n+\sigma_{0}}dy \\
&\leq C \int_{|y|>2}\frac{|u(y+h/R)-u(y)|}{|yR|^{n+\sigma_{0}}}R^{n}dy \\
&\leq C[u]_{\cA^{\beta}}|h|^{\beta}R^{-\sigma_{0}-\beta} \leq C|h|^{\beta}R^{-s}. \\
\end{align*}
For $I_{3}$ we use (vi):
\begin{align*}
 I_{3} &= \int_{R+2|h|<|y|<2R}\frac{|\tilde{u}(y+h)-\tilde{u}(y)|}{|x_{0}-y|^{n+\sigma_{0}}}dy\\
&\leq C \int_{R+2|h|<|y|<2R}|h/R|^{\beta}|x_{0}-y|^{-n-\sigma_{0}}dy \\
&\leq C \int_{1<|y-x_{0}|}|h/R|^{\beta}|x_{0}-y|^{-n-\sigma_{0}}dy 
\leq C|h|^{\beta}R^{-s}.\\
\end{align*}
For the next integral we rescale the oscillation:
\begin{align*}
 I_{2}&\leq \int_{R-2|h|<|y|<R+2|h|}\frac{O[\tilde{u}_{x}](B_{h})}{|x_{0}-y|^{n+\sigma_{0}}}dy \\
&\leq C|h/R|^{s} \int_{R-2|h|<|y|<R+2|h|}\frac{1}{|x_{0}-y|^{n+\sigma_{0}}}dy, \\
\end{align*}
which can be estimated explicitly. The following is easy to see using polar coordinates about $0$.
\[
 \int_{R-2|h|<|y|<R+2|h|}\frac{1}{|x_{0}-y|^{n+\sigma_{0}}}dy
\leq C|h|\int_{|y|=R}\frac{1}{|x_{0}-y|^{n+\sigma_{0}}}dy
\]
Without loss of generality take $x_{0}=(R-2,0,0,\ldots)$. Now we estimate the integral over the hemisphere away from $x_{0}$:
\[
 C|h|\int_{|y|=R, y_{1}<0}\frac{1}{|x_{0}-y|^{n+\sigma_{0}}}dy\leq C|h|R^{n-1}R^{-n-\sigma_{0}}.
\]
 For the other hemisphere, change variables to the stereographic projection from the point $(-R,0,0,\ldots)$, i.e. 
\begin{equation}\label{stereo}
y=(y_{1},y_{2},\ldots,y_{n})\in \partial B_{R} \rightarrow z=\left(\frac{y_{2}}{R+y_{1}},\ldots,\frac{y_{n}}{R+y_{1}}\right)\in \RR^{n-1}.
\end{equation}
In these coordinates, $|x_{0}-y|>C\sqrt{|Rz|^{2}+4}$ provided $y_{1}>0$. The volume element is $dy=R^{n-1}/(1+|z|^{2})^{n-1}dz$. putting all this together, the remaining part of the integral is
\[
\leq C|h|\int_{|z|\leq 1}\frac{R^{n-1}}{(1+|z|^{2})^{n-1}(1+|Rz|)^{n+\sigma_{0}}}dz,
\]
and (changing variables) that integral is bounded by
\[
 \int_{\RR^{n-1}}\frac{1}{(1+|z/R|^{2})^{n-1}(1+|z|)^{n+\sigma_{0}}}dz\leq C.
\]

The final part remaining, $I_{1}$, works similarly, but using the weighted estimate (iii). We break the integral into two parts:
\begin{align*}
 I_{1} &\leq \int_{|y|<R-2|h|, |y-x_{0}|>1}\frac{|h/R|^{\beta}(1-|y/R|)^{s-\beta}}{|x_{0}-y|^{n+\sigma_{0}}}dy \\
&\leq CR^{-s}|h|^{\beta}\int_{|y|\leq R}\frac{(R-|y|)^{s-\beta}}{(1+|y-x_{0}|)^{n+\sigma_{0}}}dy \\
&\leq CR^{-s}|h|^{\beta}\left(\int_{R-1<|y|<R} +\int_{|y|<R-1} \right). \\
\end{align*}
Over the region $|y|<R-1$, the weight is bounded by $1$, so we have
\[
 \int_{|y|<R-1}\frac{(R-|y|)^{s-\beta}}{(1+|y-x_{0}|)^{n+\sigma_{0}}}dy
\leq \int_{\RR^{n}}\frac{1}{(1+|y-x_{0}|)^{n+\sigma_{0}}}dy<C
\]
Over the other region, use the triangle inequality to give $|y-x_{0}|\geq |Ry/|y|-x_{0}|-|Ry/|y|-y|\geq |Ry/|y|-x_{0}|-1$. Plugging this in and switching to polar coordinates about $0$,
\[
 \int_{R-1<|y|<R}\frac{(R-|y|)^{s-\beta}}{(1+|y-x_{0}|)^{n+\sigma_{0}}}dy
\leq \int_{|y|=R}\frac{1}{|y-x_{0}|^{n+\sigma_{0}}}dy\int_{R-1}^{R}(R-r)^{s-\beta}dr.
\]
The first factor we've already estimated above using \eqref{stereo}, while the second factor is finite. This shows that $I_{1}\leq CR^{-s}|h|^{\beta}$.

We can now invoke Lemma \ref{part1lem1} on the function $R^{s}\tilde{u}_{x_{0}}$. Set $x_{0}=Rz$ with $1-2/R=|z|$. Conclusion \eqref{part1lem1c1} implies that if $|h|<1/2R=(1-|z|)/4$, 
\begin{align*}
|u(z+h)-u(z)|&= |\tilde{u}(Rz+Rh)-\tilde{u}(Rz)|\\
&\leq CR^{-s}|Rh|^{\BA} \leq C |h|^{\BA}(1-|z|)^{s-\BA}.
\end{align*}
At the cost of a larger constant, the same will hold for $|h|<(1-|z|)/2$. This gives \eqref{part1lem2c1} and completes the proof.
\end{proof}

\begin{proof}[Proof of Theorem \ref{part1}]
First, use (A), (B), (D), and (E) with Theorem \ref{boundaryreg} to obtain $u \in C^{0,s}(B_{2})$ for some $s$. We can now apply Lemmas \ref{part1lem1} and \ref{part1lem2} with $\beta=s$. Indeed, (i) follows from (A), (ii) from (B), (iii), (iv), and (vii) from the boundary regularity just mentioned, (v) from (C), and (vi) from (D).

Now apply both lemmas with $\beta=s + \alpha$. All the assumptions are satisfied for the same reason except (iii), which follows from \eqref{part1lem2c1} from the previous step. Iterate this until the first time $\beta+\alpha>1$, and then use Lemma \ref{part1lem1} one final time. Note that this requires only a finite, universal, number of steps (at most $1/\alpha+1$), and so the growth of the constants in each step is not a problem.
\end{proof}

\begin{rem}\label{part1rem}
 Note that if in Theorem \ref{part1} we assume
\begin{itemize}
 \item[(A')]$\|f\|_{C^{0,\beta}(B_{1})}<1$
 \item[(C')]$[u]_{\cA^{\beta}}<1$
 \item[(D')]$\|u\|_{C^{0,\beta}(B_{2}\backslash B_{1})}<1$
\end{itemize}
in place of (A), (C), and (D), the proof still goes through up to applying Lemma \ref{part1lem1} with exponent $\beta$, giving the conclusion
\begin{equation}\label{part1remc}
  \|u\|_{C^{\beta+\alpha}(B_{1/2})}\leq C
\end{equation}
in place of \eqref{part1c} (provided $\beta+\alpha \neq 1$.
\end{rem}

In later sections it will be helpful to have the following quantitative version of Theorem \ref{part1}.

\begin{cor}\label{part1cor}
Under the assumptions of Theorem \ref{part1}, we have the further estimate
\begin{equation}\label{part1corc1}
 |\nabla u(x+h) - \nabla u(x)|\leq C |h|^{\alpha'}(1-|x|)^{s-\alpha'-1}
\end{equation}
for $|h|<(1-|x|)/2$ and $0<\alpha'\leq \alpha)$.
\end{cor}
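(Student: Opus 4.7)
The plan is to execute one more rescaling argument in the style of Lemma \ref{part1lem2}, now aimed at turning the interior $C^{1,\alpha}$ estimate into a weighted version up to the boundary. Let $\beta_0$ be the value of $\beta$ at the last step of the iteration in Theorem \ref{part1}'s proof, so $\beta_0<1$; the second-to-last step's application of Lemma \ref{part1lem2} yields via \eqref{part1lem2c1} the weighted \Holder estimate
\[
|u(x+h)-u(x)|\leq C|h|^{\beta_0}(1-|x|)^{s-\beta_0}\quad\text{for }|x|<1-2|h|,
\]
and the corollary's $\alpha$ is exactly the exponent Lemma \ref{part1lem1} produces from this $\beta_0$. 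For $|x|<1/2$ the corollary follows directly from \eqref{part1c}. So fix $x$ with $|x|\geq 1/2$, set $R=2/(1-|x|)\geq 4$, $z_0=Rx$ (so $|z_0|=R-2$), and let $W(y):=R^{s}(u((y+z_0)/R)-u(x))$. Translation-invariance of $I$ together with the $1$-homogeneity of $M^{\pm}$ yields that $W$ satisfies a uniformly elliptic, translation-invariant equation $J(W,y)=R^{s-\sigma}f((y+z_0)/R)$ on $B_1$, with $J$ elliptic with respect to $\cL_0$ with the same constants as $I$.

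Apply Lemma \ref{part1lem1} to $W$ with $\beta=\beta_0$, verifying its hypotheses almost verbatim from Lemma \ref{part1lem2}'s argument. Hypothesis (i) reduces to bounding $\|R^{s-\sigma}f((\cdot+z_0)/R)\|_{C^{0,\beta_0}(B_1)}\leq CR^{s-\sigma}$, a universal constant. Hypothesis (ii) is automatic. For (iii), note that $(y+z_0)/R\in B_1$ at distance at least $(1+2|h|)/R$ from $\partial B_1$ when $|y|<1-2|h|$; the weighted estimate above then transports to $|W(y+h)-W(y)|\leq C|h|^{\beta_0}$, which is in turn bounded by $C|h|^{\beta_0}(1-|y|)^{s-\beta_0}$ since $s<\beta_0$. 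Hypothesis (iv) is $O[W](B_2)\leq R^{s}O[u](B_{2/R}(x))\leq C$, using the $C^{0,s}(B_2)$ regularity of $u$ from Theorem \ref{boundaryreg}. Hypothesis (v) is the most delicate: split the $\cA^{\beta_0}$ integral of $W$ into the same four regions $B_{R-2|h|}\setminus B_1(z_0)$, $B_{R+2|h|}\setminus B_{R-2|h|}$, $B_{2R}\setminus B_{R+2|h|}$, and the complement of $B_{2R}$ as in the estimation of (v) in Lemma \ref{part1lem2}, bounding each by $CR^{-s}|h|^{\beta_0}$ via respectively the weighted \Holder estimate, the oscillation bound from (D) rescaled, the spherical-cap computation \eqref{stereo}, and $[u]_{\cA^{\beta_0}}\leq[u]_{\cA^{1}}<1$ from (C).

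With every hypothesis in hand, the conclusion \eqref{part1lem1c2} of Lemma \ref{part1lem1} (applicable since $\beta_0+\alpha>1$ for $\alpha$ the corollary's exponent) gives $[\nabla W]_{C^{0,\alpha}(B_{1/2})}\leq C$. Unraveling $\nabla W(y)=R^{s-1}\nabla u(x+y/R)$ and taking the two arguments as $0$ and $Rh$ for $|h|<1/(2R)=(1-|x|)/4$ produces
\[
R^{s-1}|\nabla u(x+h)-\nabla u(x)|\leq C(R|h|)^{\alpha},
\]
which is \eqref{part1corc1} at $\alpha'=\alpha$ on that range of $h$. The remaining range $|h|\in((1-|x|)/4,(1-|x|)/2)$ is absorbed into a larger constant via the triangle-inequality argument at the end of Lemma \ref{part1lem2}'s proof. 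Finally, for $0<\alpha'<\alpha$, the constraint $|h|<(1-|x|)/2$ combined with $\alpha-\alpha'>0$ gives $|h|^{\alpha-\alpha'}\leq C(1-|x|)^{\alpha-\alpha'}$, so the estimate at exponent $\alpha$ implies \eqref{part1corc1} at the prescribed smaller exponent $\alpha'$.

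The main technical obstacle is hypothesis (v): tracking the correct scaling power $R^{-s}|h|^{\beta_0}$ across the four pieces of the integral, in particular handling the annulus $B_{R+2|h|}\setminus B_{R-2|h|}$ via the stereographic projection \eqref{stereo}. Since this is a direct transcription of Lemma \ref{part1lem2}'s computation with the sole substitution $\beta\mapsto\beta_0$, no new ideas are needed; the only care required is to check that the integrand in the innermost region, which involves $(R-|y|)^{s-\beta_0}$, remains integrable, and this holds because $s>0$ forces the corresponding one-dimensional weight to be in $L^{1}$ up to the boundary.
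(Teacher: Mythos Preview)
Your proposal is correct and follows essentially the same approach as the paper: re-run the rescaling argument of Lemma \ref{part1lem2} so as to extract the gradient conclusion \eqref{part1lem1c2} of Lemma \ref{part1lem1} rather than \eqref{part1lem1c1}, then undo the scaling to produce the weighted estimate \eqref{part1corc1}. The paper's one-line proof phrases this as ``applying the proof of Lemma \ref{part1lem2} with $\beta=1$'' instead of your $\beta=\beta_0$; this is a cosmetic difference (your choice simply avoids first passing through the weighted Lipschitz estimate), and aside from a harmless mislabeling of which ingredient handles which of the four regions in (v), your argument is sound.
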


\begin{proof}
 \eqref{part1corc1} is an immediate consequence of applying the proof of Lemma \ref{part1lem2} with $\beta=1$.
\end{proof}

\section{Perturbative Argument}

\renewcommand{\BA}{\beta+\alpha}

In this section we prove Theorem \ref{part2intro}, in the following more general form:

\begin{thm} \label{part2}
 Let $I$ be uniformly elliptic with respect to $\cL_{0}$, $\sigma>\sigma_{0}>1$, and $u$ satisfy $-C_{0}\leq I(u,x)\leq C_{0}$ in the viscosity sense on $B_{1}$. Assume $u\in L^{1}(\RR^{n},\omega)$. Then there exist $\alpha_{1}, \eta>0$ such that if $\|I_{x_{0}}-I\|_{\sigma}< \eta$ for each $x_{0}\in B_{1/2}$, then we have
\begin{equation}
 \|u\|_{C^{1,\alpha_{1}}(B_{1/2})}\leq C \left(\|u\|_{L^{1}(\RR^{n},\omega)}+\|u\|_{L^{\infty}(B_{1})}+C_{0}+\|I(0,x)\|_{L^{\infty}(B_{1})}\right).
\end{equation}
\end{thm}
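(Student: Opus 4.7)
The plan is to combine Theorem \ref{part1} (and Remark \ref{part1rem}) with a Caffarelli-style approximation-and-iteration scheme, in the spirit of \cite[Theorem 52]{CS2}. After normalizing so that the right-hand side of the claimed estimate equals $1$, the goal is to build, for each $x_0 \in B_{1/2}$, a sequence of affine functions $\ell_k(x) = a_k + b_k\cdot (x-x_0)$ such that
\[
\|u - \ell_k\|_{L^\infty(B_{r^k}(x_0))} \leq r^{k(1+\alpha_1)},
\]
for a universal $r \in (0,1)$ and $\alpha_1 > 0$, with $|b_{k+1}-b_k|\leq C r^{k\alpha_1}$. Standard arguments then convert this into the pointwise $C^{1,\alpha_1}$ estimate.

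The engine is an approximation lemma: given $\delta > 0$ one finds $\eta = \eta(\delta) > 0$ such that if $\|I_{x_0}-I\|_\sigma < \eta$, $\|I(u,\cdot)\|_{L^\infty(B_1)}\leq \eta$, and $u$ satisfies the size normalizations, then there is a $v$ solving the translation-invariant equation $I_{x_0}(v,\cdot) = 0$ on a slightly smaller ball, agreeing with $u$ outside it, with $\|u-v\|_{L^\infty(B_{1/2})}\leq \delta$. This is proved by a Pucci/maximum-principle comparison: $u-v$ sits in the usual extremal class with right-hand side controlled by $\eta$ plus the closeness of $I$ to $I_{x_0}$ tested against $v$, and one uses the $\cL_0$-ABP estimate from \cite{CS1}. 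One then applies Theorem \ref{part1} (or Remark \ref{part1rem}) to $v$ to deduce $|v(x)-\ell(x)|\leq C |x-x_0|^{1+\alpha}$ for a suitable affine $\ell$, hence by triangle inequality $\|u-\ell\|_{L^\infty(B_r(x_0))}\leq C(\delta + r^{1+\alpha})$. Choosing $r$ small, then $\delta$ small, produces the decay $r^{1+\alpha_1}$ and sets up the induction: define $u_{k+1}(x) = (u - \ell_k)(x_0 + r^k x)/r^{k(1+\alpha_1)}$, which satisfies a rescaled equation whose rescaled operator still satisfies the $\eta$-closeness hypothesis by definition of $\|\cdot\|_\sigma$, and for which the size hypotheses are preserved by construction.

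The main obstacle is that Theorem \ref{part1} demands the strong boundary-data hypotheses (C) and (D), which the function $v$ produced by the approximation lemma only inherits weakly: the boundary data of $v$ is essentially $u$, and at the very first step $u$ is only in $L^1(\RR^n,\omega)\cap L^\infty$. This is why the author announces that the iteration must be run \emph{several times}. First, one uses the $\cL_0$ \Holder estimate \cite{CS1} to upgrade $u$ to $C^{0,\alpha_0}(B_{7/8})$, which feeds into Remark \ref{part1rem} with $\beta = \alpha_0$ to yield a pointwise $C^{0,\alpha_0+\alpha}$ expansion at every $x_0\in B_{1/2}$ via the inductive scheme above (with polynomial of degree $0$ in place of the affine $\ell_k$). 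This upgraded regularity is now used as the boundary data for a second pass, producing $C^{0,\alpha_0+2\alpha}$, and so on. After a universal number of iterations $\beta+\alpha$ first exceeds $1$, at which point one reinvokes the full Theorem \ref{part1} (with affine polynomials $\ell_k$ and using Corollary \ref{part1cor} for the weighted gradient bound needed to verify assumption (iii) at each rescaling) to obtain the $C^{1,\alpha_1}$ decay.

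The verification that hypothesis (v) of Lemma \ref{part1lem1} — equivalently the $\cA^\beta$ seminorm — is preserved under the rescaling $u \mapsto u_{k+1}$ uses exactly the same decomposition as in the proof of Lemma \ref{part1lem2}: the far tail is controlled by a direct change of variables in $[u]_{\cA^\beta}$, and the near-boundary contribution by the interior Hölder (or $C^{0,1}$) information carried along inductively. Once all scalings go through, the pointwise $C^{1,\alpha_1}$ bound at each $x_0\in B_{1/2}$ combines, in the standard way, with the uniformly controlled gradients $|b_k|$ to yield the global $C^{1,\alpha_1}(B_{1/2})$ estimate.
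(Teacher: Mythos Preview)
Your proposal is correct and follows essentially the same approach as the paper: an approximation lemma (the paper cites \cite[Lemma 7]{CS2}) feeding Remark~\ref{part1rem} inside a Caffarelli-style iteration, bootstrapped through a ladder of $C^{0,\beta}$ estimates before a final pass with affine functions. The only organizational difference is that the paper packages the bootstrap as an explicit inductive lemma (Lemma~\ref{part2lem1}), multiplying by a cutoff at each stage to reduce to globally $C^{0,\beta}$ data, and verifies the propagated tail condition (your ``hypothesis (v)'') by reapplying that very lemma to the rescaled $w_{k+1}$ at every center rather than by a direct decomposition in the style of Lemma~\ref{part1lem2}.
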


By $I_{x_{0}}$ we mean the operator with fixed coefficients. Note that there is no loss of generality in assuming $I(0,x)=0$, for otherwise consider the operator $I(u,x)-I(0,x)$ instead, which satisfies all of the same assumptions. In what follows, let $\alpha$ be one quarter the minimum of the exponent in the conclusion of Theorem \ref{part1} and $\sigma_{0}-1$. 

The structure of the argument is as follows: rather than prove the interior $C^{1,\alpha}$ estimate immediately, we instead prove a sequence of $C^{0,\beta}$ interior estimates, each time increasing $\beta$ a fixed amount. Then a final step takes us from a $C^{0,\beta}$ estimate with $\beta$ very close to $1$ to the desired $C^{1,\alpha}$ estimate. We begin by proving the following lemma by induction on the parameter $\beta$.

\begin{lem}\label{part2lem1}
 Fix parameter $\beta\in (0,1)$. Under the same assumptions as Theorem \ref{part2}, we have 
\begin{equation} \label{part2lem1c}
 \|u\|_{C^{0,\beta}(B_{1/2})}\leq C \left(\|u\|_{L^{1}(\RR^{n},\omega)}+\|u\|_{L^{\infty}(B_{1})}+C_{0}\right)
\end{equation}
for a constant $C=C(\beta)$.
\end{lem}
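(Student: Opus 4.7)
The plan is to argue by induction on $\beta$. For the base case, let $\alpha_{0}>0$ be the universal exponent from the interior \Holder estimate of \cite{CS1}. Since $u$ is a viscosity solution with $-C_{0}\leq I(u,\cdot)\leq C_{0}$ on $B_{1}$ and $I$ is uniformly elliptic with respect to $\cL_{0}$ (after the reduction $I(0,\cdot)\equiv 0$ noted just before the lemma), the ellipticity gives $M^{-}u\leq C_{0}$ and $M^{+}u\geq -C_{0}$ on $B_{1}$, and the cited theorem yields \eqref{part2lem1c} with $\beta=\alpha_{0}$.

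For the inductive step, suppose \eqref{part2lem1c} holds for some $\beta<1$; I upgrade to $\beta+\alpha$, with $\alpha$ the universal constant fixed in the paragraph before the lemma. Iteration then reaches any target $\beta\in(0,1)$ in a finite number of steps. After normalization, I may assume the right-hand side of \eqref{part2lem1c} is bounded by $1$; a covering argument applied to the inductive hypothesis at shifted centers also gives $u\in C^{0,\beta}(B_{1-\delta})$ for any small $\delta>0$, with constants depending on $\delta$.

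The core of the inductive step is a comparison-and-iteration scheme at each $x_{0}\in B_{1/2}$. At a small scale $r$, let $v$ solve the Dirichlet problem $I_{x_{0}}(v,\cdot)=0$ in $B_{r}(x_{0})$ with $v=u$ on $\RR^{n}\setminus B_{r}(x_{0})$. A standard approximation lemma, built from the regularization of Section 5 and the smallness $\|I-I_{x_{0}}\|_{\sigma}<\eta$, yields
\begin{equation*}
\|u-v\|_{L^{\infty}(B_{r}(x_{0}))}\leq \tau(\eta),
\end{equation*}
with $\tau(\eta)\to 0$ as $\eta\to 0$. Rescaling $\tilde{v}(y)=v(x_{0}+ry)$ to the unit ball and applying Remark \ref{part1rem} with exponent $\beta$ gives $\tilde{v}\in C^{0,\beta+\alpha}(B_{1/2})$ uniformly, hence $v\in C^{0,\beta+\alpha}(B_{r/2}(x_{0}))$ with proper scaling. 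The triangle inequality then yields
\begin{equation*}
O[u](B_{\rho}(x_{0}))\leq C\rho^{\beta+\alpha}+2\tau(\eta) \quad\text{for } \rho\leq r/2,
\end{equation*}
and choosing $\eta$ small and iterating dyadically with a geometric ratio produces the desired $C^{0,\beta+\alpha}$ bound on $B_{1/2}$.

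The main technical obstacle is verifying the hypotheses of Remark \ref{part1rem} for the rescaled $\tilde{v}$. Assumptions (A') and (E) are free since $I_{x_{0}}(v)=0$; (B) follows from the $L^{\infty}$ bound on $u$ and a direct computation showing the weighted $L^{1}$ norm transforms correctly under the rescaling; (D') reduces via the $r^{\beta}$ scaling factor to the $C^{0,\beta}$ seminorm of $u$ on the annulus $B_{2r}(x_{0})\setminus B_{r}(x_{0})$, controlled by the inductive hypothesis on $B_{1-\delta}$. The delicate assumption is (C'), the $\cA^{\beta}$ seminorm of $\tilde{v}$: after changing variables, one must bound
\begin{equation*}
|h|^{-\beta}r^{\sigma_{0}}\int_{|z-x_{0}|>r(1+2|h|)}\frac{|u(z+rh)-u(z)|}{|z-x_{0}|^{n+\sigma_{0}}}dz
\end{equation*}
uniformly in $|h|<1/8$. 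The integral splits into a near region ($|z-x_{0}|\lesssim 1$), where the inductive \Holder estimate combines with the $r^{\sigma_{0}}$ factor and the weight's singularity to give a bound of order $r^{\beta}$, and a far region ($|z-x_{0}|\gtrsim 1$), where only $u\in L^{1}(\RR^{n},\omega)$ is available. Handling the far region uniformly in $|h|$ without a $|h|^{-\beta}$ blowup is the crux of the estimate; it relies on $\sigma_{0}>1$, on the choice $\alpha<\sigma_{0}-1$, and on exploiting the decay of the weight together with the smallness of the shift $|rh|$. Once (C') is established, Remark \ref{part1rem} applies and the inductive step goes through.
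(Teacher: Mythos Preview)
Your outline (induction on $\beta$, base case from the \Holder estimate, inductive step by comparison with the frozen-coefficient solution) is the same as the paper's. The gap is in the sentence ``choosing $\eta$ small and iterating dyadically with a geometric ratio produces the desired $C^{0,\beta+\alpha}$ bound.'' The single-scale inequality
\[
O[u](B_{\rho}(x_{0}))\leq C\rho^{\beta+\alpha}+2\tau(\eta),\qquad \rho\leq r/2,
\]
does not improve by repetition: the additive error $2\tau(\eta)$ is fixed and independent of $\rho$, so you never get $O[u](B_{\rho})\lesssim \rho^{\beta+\alpha}$. To absorb it you must, at each step $k$, subtract a constant $a_{k}$ and rescale to $w_{k}(x)=(u(\gamma^{k}x)-a_{k})\gamma^{-k(\beta+\alpha)}$, and then re-verify for $w_{k}$ \emph{both} the hypotheses of the approximation lemma (a modulus of continuity with controlled growth at infinity) \emph{and} the hypotheses (B), (C'), (D') of Remark~\ref{part1rem}. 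These do not follow from the step-$0$ estimates you wrote down; they require the inductive conditions the paper calls (ii.$k$) and (iii.$k$), and checking (iii.$k{+}1$) in fact uses the inductive hypothesis (Lemma~\ref{part2lem1} at level $\beta$) applied to $w_{k+1}$ on large balls. This is precisely the content of Lemma~\ref{part2lem2}, and it is where the work lies; your proposal elides it.

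The paper also makes a simplification you are missing. Rather than wrestle with the far-region tail of $u$ in (C') at every scale, it multiplies $u$ by a cutoff $\phi$ supported in $B_{1/2}$, so $\phi u$ is compactly supported and globally $C^{0,\beta}$ by the inductive hypothesis; a short viscosity computation shows $|I(\phi u,x)|$ is still bounded on $B_{1/8}$. Then Lemma~\ref{part2lem2} (the full iteration with the tracked conditions (i.$k$)--(iii.$k$)) is applied to $\phi u$. Your far-region discussion (change of variables in the weight to gain a factor $|rh|$) is a correct idea for the \emph{first} step, but it does not survive the rescaling $w_{k}$, whose tails grow like $|x|^{\beta+2\alpha}$ and whose modulus degrades; the cutoff trick is what makes this manageable.
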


Note that the constant in \eqref{part2lem1c} is allowed to depend on $\beta$; in particular, it may blow up as $\beta \rightarrow 1$. In the proof of Theorem \ref{part2}, we'll only use Lemma \ref{part2lem1} for some fixed parameter $\beta<1$, and won't care about this limiting behavior. Of course, once Theorem \ref{part2} is proven,  \eqref{part2lem1c} is valid with a constant independent of $\beta$.

We break up the proof of Lemma \ref{part2lem1} into parts. The first step is the following lemma, where notably we assume that $u$ is in $C^{\beta}(\RR^{n})$:

\begin{lem} \label{part2lem2}
 Assume Lemma \ref{part2lem1} holds with parameter $\beta<1$. Then there exist $\eta, C$ depending only on $\beta, \lambda, \Lambda, \sigma_{0}$, and $n$ such that if $-\eta \leq I(u,x)\leq \eta$ on $B_{1}$, with $I$ as in Theorem \ref{part2}, $\beta+\alpha<1$, and $\|u\|_{C^{0,\beta}(\RR^{n})}\leq 1$, then $|u(x)-u(0)|<C|x|^{\BA}$ on $B_{1/2}$.
\end{lem}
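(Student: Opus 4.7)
The plan is to argue by contradiction via a blow-up/compactness scheme, with Remark \ref{part1rem} applied at arbitrarily large scales playing the role of a Liouville theorem. A direct iteration on balls shrinking to the origin is delicate because the natural rescaling $v_k(x) = (u(r^k x) - c_k)/r^{k(\beta+\alpha)}$ produces functions with $\|v_k\|_{L^1(\RR^n,\omega)} \sim r^{-k\alpha}$ (since $u$ is only H\"older-continuous, not integrable at a higher rate), so any off-the-shelf approximation lemma would force $\eta$ to shrink with $k$.

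Suppose the conclusion fails: there exist $\eta_n \to 0$, operators $I_n$, and solutions $u_n$ satisfying the hypotheses with $\eta = \eta_n$, but with
\[ A_n := \sup_{x \in B_{1/2}} \frac{|u_n(x) - u_n(0)|}{|x|^{\beta+\alpha}} \longrightarrow \infty. \]
Pick $x_n \in \bar B_{1/2}$ nearly attaining the supremum; the bound $\|u_n\|_{C^{0,\beta}(\RR^n)} \leq 1$ together with $A_n\to\infty$ forces $|x_n| \to 0$. Form the blow-ups
\[ w_n(y) := \frac{u_n(|x_n|y) - u_n(0)}{A_n |x_n|^{\beta+\alpha}}, \]
which satisfy (i) $|w_n(y)| \leq |y|^{\beta+\alpha}$ on $B_{1/(2|x_n|)}$, (ii) $|w_n(x_n/|x_n|)| \geq 1/2$, and (iii) $[w_n]_{C^{0,\beta}(\RR^n)} \leq (A_n|x_n|^\alpha)^{-1}$. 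By scale-invariance of $\|\cdot\|_\sigma$, each $w_n$ solves a rescaled equation $J_n(w_n, \cdot) = g_n$ on $B_{1/|x_n|}$ with $\|g_n\|_\infty \to 0$ and $\|J_n - (J_n)_{y_0}\|_\sigma \leq \eta_n \to 0$. If $A_n|x_n|^\alpha \to \infty$, (iii) together with $w_n(0) = 0$ gives $w_n \to 0$ locally uniformly, contradicting (ii). Otherwise $[w_n]_{C^{0,\beta}}$ stays bounded, and by Arzel\`a--Ascoli combined with compactness of the class of translation-invariant operators uniformly elliptic with respect to $\cL_0$ of orders in $[\sigma_0, 2]$ (invoking Lemma \ref{part2lem1} to supply the interior $C^{0,\beta}$ control needed to pass to the limit), a subsequence gives $w_n \to w_\infty$ locally uniformly and $J_n \to I_\infty$ translation-invariant, with $I_\infty(w_\infty, \cdot) = 0$ on $\RR^n$ by stability of viscosity solutions. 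The limit satisfies $|w_\infty(y)| \leq |y|^{\beta+\alpha}$, $w_\infty(0) = 0$, and $|w_\infty(y_*)| \geq 1/2$ for some $|y_*|=1$.

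The Liouville contradiction follows from Remark \ref{part1rem}: for each $R \geq 1$, the rescaling $\tilde w_R(y) := w_\infty(Ry)/R^{\beta+\alpha}$ on $B_1$ satisfies hypotheses (A'), (B), (C'), (D'), (E) uniformly in $R$, since the growth bound $|w_\infty(y)|\leq|y|^{\beta+\alpha}$ and the uniform $C^{0,\beta}$-seminorm of $w_\infty$ jointly control $\|\tilde w_R\|_{L^\infty(B_2)}$, $\|\tilde w_R\|_{L^1(\RR^n,\omega)}$, $[\tilde w_R]_{C^{0,\beta}(B_2 \setminus B_1)}$, and $[\tilde w_R]_{\cA^\beta}$. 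The remark then yields $[\tilde w_R]_{C^{0,\beta+\alpha_0}(B_{1/2})} \leq C$ for the universal exponent $\alpha_0$ from Theorem \ref{part1}, which unfolds to
\[ [w_\infty]_{C^{0,\beta+\alpha_0}(B_{R/2})} \leq C R^{\alpha - \alpha_0}. \]
Since $\alpha = \alpha_0/4 < \alpha_0$, letting $R \to \infty$ forces $w_\infty$ to be constant, hence $\equiv 0$, contradicting (ii). The main obstacle is the stability/compactness step for the operators: extracting a limiting translation-invariant $I_\infty$ and order $\sigma_\infty \in [\sigma_0, 2]$ in the $\|\cdot\|_\sigma$-topology, and verifying that $w_\infty$ is genuinely a global viscosity solution of $I_\infty(w_\infty, \cdot) = 0$ as the rescaled domains $B_{1/|x_n|}$ exhaust $\RR^n$.
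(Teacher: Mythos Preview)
Your blow-up strategy is genuinely different from the paper's direct iteration, and it is the kind of argument that can be made to work, but as written there is a real gap in the dichotomy. From $\|u_n\|_{C^{0,\beta}}\le 1$ and the near-attainment of $A_n$ at $x_n$ you get $|u_n(x_n)-u_n(0)|\le |x_n|^\beta$ and $|u_n(x_n)-u_n(0)|\ge \tfrac12 A_n|x_n|^{\beta+\alpha}$, hence $A_n|x_n|^\alpha\le 2$ always; so your Case~1 ($A_n|x_n|^\alpha\to\infty$) is vacuous. The dangerous scenario is the opposite one, $A_n|x_n|^\alpha\to 0$, and in that case your bound $[w_n]_{C^{0,\beta}(\RR^n)}\le (A_n|x_n|^\alpha)^{-1}$ gives no uniform control. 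Your Liouville step then breaks: you explicitly invoke ``the uniform $C^{0,\beta}$-seminorm of $w_\infty$'' to verify $(C')$ and $(D')$ of Remark~\ref{part1rem} for $\tilde w_R$, but without a global $C^{0,\beta}$ bound on $w_\infty$ neither $[\tilde w_R]_{\cA^\beta}$ nor $[\tilde w_R]_{C^{0,\beta}(B_2\setminus B_1)}$ is obviously bounded uniformly in $R$. This can be repaired --- for instance by applying the inductive hypothesis (Lemma~\ref{part2lem1}) to $w_\infty$ at every scale $S$ to obtain $[w_\infty]_{C^{0,\beta}(B_S)}\le CS^\alpha$ and then summing dyadically, or by setting up the blow-up via a monotone quantity so that the global H\"older control is built in --- but it is not free and you have not done it. The operator-compactness issue you flag is also genuine, though it is closer to a citation problem than a conceptual one.

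For comparison, the paper avoids all of this by a constructive iteration: it builds $a_k$ so that $w_k(x)=\gamma^{-k(\beta+\alpha)}(u(\gamma^k x)-a_k)$ satisfies not only the pointwise decay (your (i)) but also a growth bound (ii.k) $|w_k(x)|\le 1+|x|^{\beta+2\alpha}$ and a weighted H\"older bound (iii.k) $|w_k(x+h)-w_k(x)|\le |h|^\beta(1+|2x|^{\beta+2\alpha})$. These two are exactly what keep $\|w_k\|_{L^1(\RR^n,\omega)}$ and $[w_k]_{\cA^\beta}$ uniformly bounded (since $\beta+2\alpha<\sigma_0$), so your opening worry that the rescalings have $L^1(\omega)$-norms of order $r^{-k\alpha}$ does not materialize. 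The inductive hypothesis (Lemma~\ref{part2lem1}) is used at each step precisely to propagate (iii.k), which plays the role that the global $C^{0,\beta}$ seminorm plays in your argument.
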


\begin{proof}
Let $\gamma$ be a (small) constant to be chosen below. Choose $\eta$ so as to make \cite[Lemma 7]{CS2} apply with $\rho(z)=2|z|^{\beta}+2|z|^{\beta+2\alpha}$, $\epsilon=\gamma^{\beta+3\alpha}$, and $M=1$. We construct a sequence of numbers $a_{k}$ with the following properties:
\begin{itemize}
 \item[(i.k)] $\sup_{B_{\gamma^{k}}}|u(x)-a_{k}|\leq \gamma^{k(\BA)}$
 \item[(ii.k)] $|u(\gamma^{k}x)-a_{k}|\leq \gamma^{k(\BA)}(1+|x|^{\beta+2\alpha})$ for $x\in \RR^{n}$
 \item[(iii.k)] $|u(\gamma^{k}(x+h))-u(\gamma^{k}(x))|\leq |h|^{\beta}\gamma^{k(\BA)}(1+|2x|^{\beta+2\alpha})$ for $|h|<1$ and $x\in \RR^{n}$.
\end{itemize}
The conclusion then follows, as it's easy to check that $|a_{k+1}-a_{k}|<2\gamma^{k(\BA)}$, $\lim a_{k}=u(0)$, and $|u(x)-u(0)|\leq 2\gamma^{k(\BA)}$ for $|x|<\gamma^{k}$. Heuristically, (i) controls the oscillation of $u$ on smaller balls at the correct rate to recover the desired $|x|^{\BA}$ modulus, while (ii) and (iii) are artifacts of the nonlocal nature of the operator, and are needed to control the growth and smoothness of the tails of $u$ at each step in the iteration.

Set $a_{0}=0$; then (i.0)-(iii.0) are immediate. Now given $a_{k}$ satisfying (i.k), (ii.k), and (iii.k), we construct $a_{k+1}$ satisfying (i.k+1), (ii.k+1) and (iii.k+1). Define
\[
 w_{k}(x)=\frac{u(\gamma^{k}x)-a_{k}}{\gamma^{k(\BA)}}.
\]
This satisfies $|I_{k}(w_{k},x)|\leq \eta \gamma^{k(\sigma-\beta-\alpha)}$ for $|x|<\gamma^{-k}$, where $I_{k} \equiv I_{\gamma^{-k(\BA)},\gamma^{k}}(w_{k},x)$.  Notice that the right-hand side remains bounded by $\eta$ (i.e. it gets smaller as $k$ increases). Now let $I_{k}^{0}$ be the operator with coefficients fixed at $0$, and solve
\[
 \begin{cases}
  I^{0}_{k}(v_{k},x)=0 & x \in B_{1} \\
  v_{k}=w_{k} & x \notin B_{1}.
 \end{cases}
\]
This is the Dirichlet problem for a uniformly elliptic translation-invariant operator $I^{0}$, and so it admits a unique solution. As $\|I_{k}-I_{k}^{0}\|\leq \eta$ by assumption, we use (ii.k) and (iii.k) to see that \cite[Lemma 7]{CS2} applies. This gives 
\begin{equation} \label{A1}
|w_{k}-v_{k}|\leq \gamma^{\beta+3\alpha}.
\end{equation}

At the same time, notice that $\|w_{k}\|_{L^{1}(\RR^{n},\omega)}\leq C$ by (ii.k) and $[w_{k}]_{\cA^{\beta}}+\|w_{k}\|_{C^{0,\beta}(B_{2})}\leq C$ by (iii.k), so using Remark \ref{part1rem} we have
\begin{equation} \label{A2}
\|v_{k}\|_{C^{0,1 \wedge (\beta+3\alpha)}(B_{1/2})}\leq C_{1}.
\end{equation}

Putting these together gives the following two inequalities. For $x\in B_{1/2}$.
\begin{align}
  |w_{k}(x)-v_{k}(0)| &\leq |w_{k}(x)-v_{k}(x)| + |v_{k}(x)-v_{k}(0)| \notag\\
  &\leq \gamma^{\beta+3\alpha} + C_{1}|x|^{1 \wedge (\beta+3\alpha)},  \label{B1}
\end{align}
using \eqref{A1} and \eqref{A2}. Everywhere else (in fact, for any $x\in \RR^{n}$), we can estimate this differently: 
\begin{align}
 |w_{k}(x)-v_{k}(0)| &\leq |w_{k}(x)-w_{k}(0)| + |w_{k}(0)-v_{k}(0)| \leq |w_{k}(x)|+|w_{k}(0)|+|w_{k}(0)-v_{k}(0)| \notag\\
 &\leq  (1+|x|^{\beta+2\alpha}) + 1 + \gamma^{\beta+3\alpha}, \label{B2}
\end{align}
this time using (i.k), (ii.k), and \eqref{A1}.

Set $a_{k+1}=a_{k}+\gamma^{k(\BA)}v_{k}(0)$. Then $w_{k+1}$ verifies the formula $w_{k+1}(x)=\frac{w_{k}(\gamma x)-v_{k}(0)}{\gamma^{\BA}}$. We can now use this formula to get estimates on $w_{k+1}$ in several different regions. For $x\in B_{1}$,
\begin{align}
   |w_{k+1}(x)| = &\gamma^{-\beta-\alpha}|w_{k}(\gamma x)-v_{k}(0)| \notag \\
&\leq \gamma^{2\alpha} + C_{1}|\gamma x|^{1\wedge(\beta+3\alpha)}\gamma^{-\beta-\alpha} \notag\\
&\leq (1+C_{1})\gamma^{2\alpha\wedge(1-\beta-\alpha)} \leq m \label{C1},
\end{align}
for $m<1$ small to be chosen below. We use \eqref{B1}, the fact that $|x|<1$, and in the last step made sure $\gamma=\gamma(m)$ is chosen sufficiently small (since $\beta+\alpha<1$ was assumed). This immediately gives (i.k+1). 

For $x$ in the annulus $B_{1/2\gamma}\backslash B_{1}$, we can still use \eqref{B1}:
\begin{align}
   |w_{k+1}(x)| &\leq \gamma^{2\alpha} + C_{1}|\gamma x|^{1\wedge(\beta+3\alpha)}\gamma^{-\beta-\alpha} \notag\\
&\leq (1+C_{1})\gamma^{\alpha\wedge(1-\beta-\alpha)}(1+|x|^{\beta+2\alpha}) \leq m(1+|x|^{\beta+2\alpha}) \label{C2},
\end{align}
where to get to the second line, we use $|x|>1$ if $\beta+3\alpha>1$ and $|x|<1/\gamma$ otherwise (and then, again, chose $\gamma$ sufficiently small).

When $1/2\gamma \leq |x|$, we are forced to use \eqref{B2}:
 \begin{align}
   |w_{k+1}(x)| &\leq [2+|\gamma x|^{\beta+2\alpha} + \gamma^{\beta+3\alpha}]\gamma^{-\beta-\alpha} \notag\\
&\leq \gamma^{2\alpha} +3\gamma^{\alpha}|2x|^{\beta+2\alpha} \leq m(1+|x|^{\beta+2\alpha}) \label{C3}, 
\end{align} 
using that $|x|>1/2\gamma$ to estimate the first term and choosing $\gamma$ even smaller if need be. Now combining \eqref{C1}, \eqref{C2}, and \eqref{C3} gives (ii.k+1).

To get (iii.k+1), we apply Lemma \ref{part2lem1} with parameter $\beta$ (recall that this was assumed) to $w_{k+1}$, From the computation above $|I_{k+1}(w_{k+1},x)|\leq \eta$ on $B_{1/\gamma^{k+1}}$. Picking $y\in B_{1/2\gamma^{k+1}}$, 
\[
\|w_{k+1}(\cdot - y)\|_{L^{1}(\RR^{n},\omega)}\leq \int \frac{m(1+|x-y|^{\beta+2\alpha})dx}{(1+|x|)^{n+\sigma_{0}}}
 \leq mC(1+|y|^{\beta+2\alpha}).
\]
It follows, then, (choosing $m$ small and possibly making $\eta$ smaller) that $\|w_{k+1}\|_{C^{0,\beta}(B_{1}(y))}\leq (1+|y|^{\beta+2\alpha})$ for $y\in B_{1/2\gamma^{k+1}}$. For $|y|>1/2\gamma^{k+1}$ rescale the original assumption to give 
\begin{align*}
|w_{k+1}(y+h)-w_{k+1}(y)|&=|u(\gamma^{k+1}(y+h))-u(\gamma^{k+1}y)|\gamma^{-(k+1)(\beta+\alpha)} \\
&\leq \gamma^{-(k+1)\alpha}|h|^{\beta}\leq |h|^{\beta} |2y|^{\alpha}\leq |h|^{\beta}(1 + |2y|)^{\beta+2\alpha}.
\end{align*}
 Together these imply (iii.k+1).
\end{proof}

A note on our use of \cite[Lemma 7]{CS2}. The modulus in (iii.k) deteriorates for larger $|x|$. Whether the statement of the Lemma accounts for this situation (an $x-$dependent modulus) is a bit unclear, but the proof clearly goes through.

We are now in a position to prove Lemma \ref{part2lem1} for every parameter $\beta$. This consists of a reduction argument and an application of Lemma \ref{part2lem2} in the inductive step.

\begin{proof}[Proof of Lemma \ref{part2lem1}]
 Recall that $\alpha$ is the smaller of $(\sigma_{0}-1)/4$ and one quarter of the exponent in Theorem \ref{part1}, or equivalently in \cite[Theorem 26]{CS2}. Then for $\beta=4\alpha$, \eqref{part2lem1c} follows from \cite[Theorem 26]{CS2}. Assume the lemma holds for parameter $\beta$; we'll prove it for parameter $\beta+\alpha<1$.

Let $\phi$ be a smooth cutoff with $\phi \equiv 1$ in $B_{1/4}$ and supported on $B_{1/2}$. Using Lemma \ref{part2lem1} with parameter $\beta$ (which we just assumed true), we have that $\phi u \in C^{0,\beta}(\RR^{n})$. But also, for $x \in B_{1/8}$, we claim that 
\[
I(\phi u, x)\leq I(u,x)+M^{+}(\phi-1)u(x)\leq C_{0} + C\|u\|_{L^{1}(\RR^{n},\omega)},
\]
which is assumed bounded. We must justify the first inequality (since $I$ is not translation invariant and $u$ solves the equation only in the viscosity sense). For $r<1/8$ and every $v\in C^{2}(B_{r}(x))$ satisfying $v\leq u$, $v(x)=u(x)$,  set 
\[
\psi(y)=\begin{cases}
  v(y) & B_{r}(x) \\
  \phi u(y) & \RR^{n}\backslash B_{r}(x)
 \end{cases}
\]
which is a test function for $\phi u$. But notice that $\psi^{*}$, defined by
\[
\psi^{*}(y)=\begin{cases}
  v(y) & B_{r}(x) \\
  u(y) & \RR^{n}\backslash B_{r}(x),
 \end{cases}
\]
is a test function for $u$. That means that

\[
 I(\psi,x)\leq I(\psi^{*},x)+M^{+}(\psi-\psi^{*})(x)\leq C_{0} + M^{+}(\phi-1)u(x),
\]
where the second step uses that $\psi^{*}$ is a test function for $u$. Since this inequality is satisfied for all suitable test functions $\psi$ touching $\phi u$ from below at $x$, the inequality
\[
 I(\phi u,x)\leq C_{0} + M^{+}(\phi-1)u(x) \leq C_{0}+C\|u\|_{L^{1}(\RR^{n},\omega)}
\]
is satisfied in the viscosity sense for $|x|<1/8$. Likewise, in the same region we have the other inequality
\[
 I(\phi u,x)\geq -C_{0} + M^{-}(\phi-1)u(x) \geq -C_{0}-C\|u\|_{L^{1}(\RR^{n},\omega)}.
\]

Now rescale and translate: for $|z|<1$ and $\mu>0$ take $q_{z}$ to be defined by
\[
 q_{z}(x)= \mu \phi u(\frac{x-z}{16}).
\]
Then $q_{z}$ satisfies $|I_{\mu,1/16}(q_{z},x)|\leq C \mu (1/16)^{\sigma}\leq \eta$ for $|x|<1$, $\eta$ as in Lemma \ref{part2lem2}, and $\mu$ chosen sufficiently small. By choosing $\mu$ smaller if need be, we can also ensure $\|q_{z}\|_{C^{0,\beta}(\RR^{n})}\leq 1$ and $\|q_{z}\|_{L^{1}(\RR^{n},\omega)}\leq 1$. Applying Lemma \ref{part2lem2}, we obtain that $|q_{z}(x)-q_{z}(0)|\leq C|x|^{\beta+\alpha}$ for $|x|<1/2$. Scaling back gives that $\|\phi u\|_{C^{0,\beta+\alpha}(B_{1/16})}=\|u\|_{C^{0,\beta+\alpha}(B_{1/16})}\leq C'$, as desired. (The linear dependence on the hypotheses follows from scaling, while having the estimate on $B_{1/2}$ instead of $B_{1/16}$ follows from a covering argument.)
\end{proof}

We now complete the argument:

\begin{proof}[Proof of Theorem \ref{part2}]
 The procedure is the same as the proof of Lemmas \ref{part2lem1} and \ref{part2lem2}. Make the same reductions as in the proof of Lemma \ref{part2lem1}: in particular, by using the lemma with parameter $\beta=1-\alpha/2$, reduce to the situation where $\|u\|_{C^{0,1-\alpha/2}(\RR^{n})}\leq 1$ and $|I(u,x)|\leq \eta$ on $B_{1}$. Proceeding as in the proof of Lemma \ref{part2lem2}, rather than a sequence of numbers, we construct a sequence of linear functions $l_{k} = a_{k}+b_{k}\cdot x$ satisfying 
\begin{itemize}
 \item[(i.k)] $\sup_{B_{\gamma^{k}}}|u(x)-l_{k}(x)|\leq \gamma^{k(1+\alpha)}$
 \item[(ii.k)] $|u(\gamma^{k}x)-l_{k}(\gamma^{k}x)|\leq \gamma^{k(1+\alpha)}(1+|x|^{1+2\alpha})$
 \item[(iii.k)] $|u(\gamma^{k}(x+h))-u(\gamma^{k}(x))|\leq |h|^{1-\alpha/2}\gamma^{k(1+\alpha)}(1+|2x|^{1+2\alpha})$ for $|h|<1$
 \item[(iv.k)] $|a_{k+1}-a_{k}|+\gamma^{k}|b_{k+1}-b_{k}|\leq 2C_{1}\gamma^{k(1+\alpha)}$
\end{itemize}
 Set $w_{k+1}=\frac{u(\gamma^{k}x)-l_{k}(\gamma x)}{\gamma^{k(1+\alpha)}}$. Then construct $v_{k}$ as before and set $\bar{l}_{k}=v_{k}(0)+\nabla v_{k}(0)\cdot x$ and $l_{k+1}(x)=l_{k}(x)+\gamma^{k(1+\alpha)}\bar(l)(x/\gamma^{k})$. (iii.k) ensures $w_{k}$ is in $\cA^{1-\alpha/2}$, and so we get the estimate $v_{k}\in C^{1,3\alpha}(B_{1/2})$ from Remark \ref{part1rem}. (i.k+1), (ii.k+1), and (iii.k+1) follow as before, while (iv.k+1) follows from
\[
 |a_{k+1}-a_{k}|=\gamma^{k(1+\alpha)}|\bar{l}(0)|=\gamma^{k(1+\alpha)}|v_{k}(0)|\leq C_{1}\gamma^{k(1+\alpha)}
\]
and
\[
 |b_{k+1}-b_{k}|=\gamma^{k\alpha}|\nabla \bar{l}(0)|=\gamma^{k\alpha}|\nabla v_{k}(0)|\leq C_{1}\gamma^{k\alpha}
\]
where $C_{1}$ is the constant from Theorem \ref{part1}. We conclude that $u$ is differentiable at $0$ and $|u(x)-u(0)-x\cdot \nabla u(0)|\leq C|x|^{1+\alpha}$. Together with the reduction, this implies Theorem \ref{part2} with $\alpha_{1}=\alpha$.
\end{proof}

\begin{rem}\label{part2rem}
 The same argument works if instead of $|I(u,x)|\leq C_{0}$ we have $I^{(1)}(u,x)\geq C_{0}$ and $I^{(2)}(u,x) \leq C_{0}$ on $B_{1}$, with $\|I^{(1)}-I^{(0)}\|_{\sigma}+\|I^{(2)}-I^{(0)}\|_{\sigma}\leq \eta$ and $I^{0}$ translation-invariant.
\end{rem}

\begin{rem}\label{part2rem2}
 If it's known that in addition to the assumptions above the boundary data is \Holder ($u\in C^{0,\alpha}(B_{1}^{C})$), 
the scaling argument in Lemma \ref{part1lem2} 
can be combined with Theorem \ref{part2} 
to give $\nabla u \in L^{1}(B_{1})$.
\end{rem}

\section{Existence and Smooth Approximations}

In this section we assemble some results for the Dirichlet problem \eqref{BVP}. In the local setting, existence and uniqueness are simple consequences of a comparison principle, developed for viscosity solutions by Jensen \cite{J}. For nonlocal equations, a comparison principle was proved in \cite{CS1} for translation-invariant equations, and in \cite{BI} for some classes of nonlocal operators sufficiently regular in $x$.

Rather than proving a more general comparison principle here, we approach the problem from a different and decidedly nonlocal perspective. A remarkable fact about uniformly elliptic nonlocal operators is that they can be approximated by operators which admit smooth solutions. This was used in \cite{CS3}, and is in stark contrast to the second-order theory. Below, we show how to apply this to \eqref{BVP} for general $I$.

Fix $\sigma>\sigma_{0}>1$. We will assume for the rest of this section that $I$ satisfies the following stronger continuity property:
\begin{equation}\label{strongcont}
 \|I_{x}-I_{x_{0}}\| \leq \rho(|x-x_{0}|) \quad \forall x,x_{0}\in B_{1},
\end{equation}
where $\rho$ is a modulus of continuity. Notice this is more restrictive than the continuity mandated in Definition \ref{nonlocalopdef}. We also assume $I(0,x)=0$; the theorem below will also be true for $I(0,x)$ bounded and continuous.

Our goal is the following:
\begin{thm}\label{ExUnqThm}
 Consider the Dirichlet problem \eqref{BVP} with $f\in C(B_{1})\cap L^{\infty}(B_{1})$ and $g\in L^{1}(\RR^{n},\omega)$ with $|g(x)-g(y)|\leq \rho'(|x-y|)$ for $x\in \partial B_{1}$, $y\in B_{2}\backslash B_{1}$, where $\rho'$ is a modulus of continuity. Then there exists a viscosity solution $u$ which is bounded on $B_{1}$.
\end{thm}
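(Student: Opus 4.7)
The plan is to build the solution by approximation, following the scheme promised in Section 5: regularize $I$ to an operator $I^{\delta}$ for which classical solvability of the Dirichlet problem is known, extract uniform bounds and moduli of continuity, and pass to the limit. Concretely, for $\delta>0$ replace each kernel $K$ in the ellipticity class defining $I$ by $K^{\delta}$ which equals $K$ on $\{|y|\geq \delta\}$ and equals $c_{n,\sigma}(2-\sigma)|y|^{-n-\sigma}$ (a fractional-Laplacian piece) on $\{|y|<\delta\}$, with $c_{n,\sigma}$ adjusted so that $I^{\delta}$ remains uniformly elliptic with respect to $\cL_{0}$ and still satisfies \eqref{strongcont} with the same modulus $\rho$. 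This is the approximation of \cite{CS3}: near the origin $I^{\delta}$ is a perturbation of a multiple of $(-\Delta)^{\sigma/2}$, so its viscosity solutions gain interior $C^{\sigma+\alpha}$ regularity, and the Dirichlet problem for $I^{\delta}$ admits a (classical) solution $u_{\delta}$ with $u_{\delta}=g$ on $B_{1}^{c}$, constructed via Perron's method using the comparison principle of \cite{BI} that applies because $I^{\delta}$ is sufficiently regular in $x$.

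Next I would establish uniform a priori estimates on the family $\{u_{\delta}\}$ in terms of the data. From the ellipticity inequalities $M^{-}u_{\delta}\leq f+\|I^{\delta}(0,\cdot)\|_{L^{\infty}}\leq C$ and similarly for $M^{+}$, together with the standard ABP / barrier comparison against multiples of the fundamental solution of the fractional Laplacian, one gets $\|u_{\delta}\|_{L^{\infty}(B_{1})}\leq C(\|g\|_{L^{1}(\omega)}+\|g\|_{L^{\infty}(B_{2}\setminus B_{1})}+\|f\|_{L^{\infty}})$ uniformly in $\delta$. Then Theorem \ref{boundaryreg} (or, more precisely, the variant of \cite[Theorem 32]{CS2} alluded to in its proof, which accepts a general boundary modulus $\rho'$) gives a uniform modulus of continuity $\widetilde{\rho}$ for $u_{\delta}$ on $\overline{B}_{1}$ independent of $\delta$, since the constants in Theorem \ref{boundaryreg} depend only on ellipticity parameters and on $\rho'$.

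With uniform boundedness and equicontinuity on $\overline{B}_{1}$, Arzel\`{a}--Ascoli delivers a subsequence $\delta_{k}\to 0$ along which $u_{\delta_{k}}\to u$ locally uniformly on $\overline{B}_{1}$, with $u=g$ outside $B_{1}$ and $u$ continuous on $\overline{B}_{1}$. To pass to the limit in the equation, note that for any smooth test function $\phi\in C^{2}(B_{r}(x))$ with controlled tail,
\[
|I^{\delta}(\phi,x)-I(\phi,x)|\leq C\int_{|y|<\delta}\frac{|\delta_{2}\phi(x,y)|}{|y|^{n+\sigma}}dy\leq C\|D^{2}\phi\|_{\infty}\delta^{2-\sigma}\to 0,
\]
uniformly for such $\phi$. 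Combined with local uniform convergence $u_{\delta_{k}}\to u$ and the standard stability theorem for viscosity solutions under simultaneous convergence of solutions and operators (see \cite[Section 5]{CS2}), this yields $I(u,x)=f(x)$ in the viscosity sense on $B_{1}$, and the continuity of $u$ up to $\partial B_{1}$ matches the boundary data $g$.

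The main obstacle is the very first ingredient, namely producing the classical solution $u_{\delta}$ to the regularized problem and verifying that it attains $g$ continuously. One must either import a comparison principle applicable to $I^{\delta}$ (the assumption \eqref{strongcont} is precisely tailored so that \cite{BI} applies) and run Perron's method with explicit sub- and super-solutions built from $\rho'$-controlled translates of $g$ and fractional-Laplacian barriers of the form $c\,d(x,\partial B_{1})^{s}$, or run a continuity-method argument anchored at the pure fractional Laplacian, where the $C^{\sigma+\alpha}$ estimate from \cite{CS3} provides the required compactness and uniqueness of the path of solutions. Once $u_{\delta}$ is in hand, the rest of the argument is standard; the delicate point throughout is that everything that was uniform in $\sigma>\sigma_{0}$ must also be uniform in the regularization parameter $\delta$, which is why the modified operators are kept inside the same $\cL_{0}$-ellipticity class.
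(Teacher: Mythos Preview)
Your overall architecture (regularize, solve the regularized problem, extract uniform estimates, pass to the limit by stability) matches the paper exactly, and your use of Theorem \ref{boundaryreg} for equicontinuity and of the stability lemma from \cite{CS2} for the passage to the limit are precisely what the paper does. The substantive divergence is in the middle step, producing the solution $u_{\delta}$ of the regularized problem, and here your proposal has a real gap.

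The paper's regularization is more elaborate than ``replace the kernel on $\{|y|<\delta\}$ by the fractional Laplacian.'' First, a general $I$ elliptic with respect to $\cL_{0}$ is not given by kernels; the paper passes to the $J$-formulation on symmetric differences to make sense of the cutoff. Second, and more importantly, the paper also \emph{mollifies in the $x$ variable} (the convolution $\ast_{z}\eta_{\epsilon}$ in \eqref{approxJ}). This second regularization is what drives the existence argument: writing $I^{\epsilon}(u,x)=-C_{0}(-\Delta)^{\sigma/2}u+F(u,x)$, the $x$-mollification is exactly what gives $F$ the Lipschitz-type properties of Lemma \ref{estimates1}, and those are what make the Leray--Schauder fixed point argument go through. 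Your regularization omits this, and neither of your proposed substitutes is justified: the comparison principle of \cite{BI} does not obviously apply under the bare continuity hypothesis \eqref{strongcont} (indeed, the paper explicitly says it is \emph{avoiding} the route of proving a suitable comparison principle), and the $C^{\sigma+\alpha}$ estimate of \cite{CS3} you invoke for the continuity method is for convex operators elliptic with respect to $\cL_{2}$, not for a general $I^{\delta}$. Without one of these, you have no mechanism to actually produce $u_{\delta}$. The paper's fixed-point device, together with a bootstrap via Remark \ref{part2rem2} and Lemma \ref{estimates1}(iv) to upgrade the fixed point to $C^{2,\alpha}$, is the missing idea.
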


Our first task is to construct a family of equations $I^{\epsilon}$ approximating $I$ and admitting classical solutions. The idea is to replace the operator with a fractional Laplacian on a small ball, as in \cite{CS3}.

First, we observe that $I$ is truly a function of the symmetric differences $\delta_{2}u$. Indeed, any two measurable functions whose symmetric differences coincide differ by an affine function $l$, and so, if they are in $C^{1,1}(x)\cap L^{1}(\RR^{n},\omega)$, using ellipticity gives
\[
 0=M^{-}(l,x)\leq I(u+l,x)-I(u,x)\leq M^{+}(l,x)=0.
\]
This is one way of using the nondivergence structure of these operators. 

Consider operators $J(w,x)$ acting on even functions $w:\RR^{n}\rightarrow \RR$ with $w(0)=0$ and $\psi \in C^{1,1}(0)\cap L^{1}(\RR^{n},\omega)$. Every $J$ leads to a nonlocal operator $I$ via
\begin{equation}\label{IfromJ}
 I(u,x)=J(\delta_{2}u(x,\cdot),x).
\end{equation}
Conversely, every even function with $w(0)=0$ can be written as the symmetric difference of a function (fixed at $x$); indeed, the formula $u(x+y)=w(y)/2$ gives the unique such $u$ symmetric about the point $x$ and with $u(x)=0$. Using this and the observation above, we can define a $J$ for every nonlocal operator $I$ by
\begin{equation}\label{JfromI}
 J(w,x)=I(w(x-\cdot)/2,x).
\end{equation}
These procedures give a bijection between the $I$ and the $J$. All of the constructions for nonlocal operators carry over to $J$ in the obvious ways (e.g uniform ellipticity, $J_{\mu,\lambda}$).  We use the notation $N^{+}$ for the $J$ corresponding to the extremal operator $M^{+}$.

Let $\phi$ be a smooth radial cutoff function $\equiv 1$ on $B_{1/2}$ and supported on $B_{1}$, and set $\phi_{\epsilon}(x)=\phi(x/\epsilon)$. Let $\eta_{\epsilon}(x)$ be a standard mollifier; in other words, $\eta_{\epsilon}$ integrates to $1$, is smooth, positive, radial, and supported on a ball of radius $\epsilon$. Given a nonlocal operator in form $J$, recall that $J_{z}$ is the operator frozen at $z$, i.e. $J_{z}(w,x)=\tau_{x-z}J(\tau_{z-x}w,x)$ where $\tau_{h}v(x)=v(x-h)$. Define 
\begin{equation*}
 \tilde{J}^{\epsilon}_{z}(u,x)=\begin{cases}
  J_{z}(u,x) & |z|\leq 1-\epsilon \\
  J_{(1-\epsilon)z/|z|}(u,x) & |z|>1-\epsilon.\\ 
 \end{cases}
\end{equation*}
Observe that $\tilde{J}^{\epsilon}_{z}$ is uniformly elliptic for each $z$ if $J$ is. Then construct
\begin{equation}\label{approxJ}
 J^{\epsilon}(w,x)=\tilde{J}^{\epsilon}_{z}(w(1-\phi_{\epsilon}),x)\ast_{z} \eta_{\epsilon}(x)+(2-\sigma)\int\frac{\lambda \phi_{\epsilon}(y)w(y)}{|y|^{n+\sigma}}dy
\end{equation}
for $|x|< 1$. The notation is clumsy; this is what the first term actually is: 
\[
 \int \eta_{\epsilon}(x-z)\tilde{J}^{\epsilon}_{z}(w(1-\phi_{\epsilon}),x)dz.
\]

There are two regularizations being made in this construction: cutting off the kernels and replacing by the fractional Laplacian near the origin, via $\eta$, and a smoothing in $x$, via $\eta_{\epsilon}$.

Using a smooth cutoff $\phi$ allows the operator to preserve regularity in $y$, although we won't require this below.

\begin{lem}
If $J$ is uniformly elliptic with respect to $\cL_{0}$, so is $J^{\epsilon}$. If $I$ is, in addition, continuous in the sense \eqref{strongcont},  $\|I^{\epsilon}-I\|\rightarrow 0$ as $\epsilon \rightarrow 0$. 
\end{lem}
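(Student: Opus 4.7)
The statement consists of two parts that I would handle separately: uniform ellipticity of $J^{\epsilon}$, and norm convergence $\|I^{\epsilon}-I\|\to 0$.

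For uniform ellipticity I would take admissible increments $w,v$ and compute $J^{\epsilon}(w+v,x) - J^{\epsilon}(w,x)$ directly from \eqref{approxJ}. The convolution-in-$z$ piece satisfies
\[
 \int \eta_{\epsilon}(x-z)\bigl[\tilde{J}^{\epsilon}_z((w+v)(1-\phi_\epsilon),x) - \tilde{J}^{\epsilon}_z(w(1-\phi_\epsilon),x)\bigr]\,dz \leq N^{+}(v(1-\phi_\epsilon),x),
\]
using the uniform ellipticity of each $\tilde{J}^{\epsilon}_z$ (the resulting bound does not depend on $z$, and $\eta_\epsilon$ is a probability kernel). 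The second piece contributes the linear term $(2-\sigma)\int \lambda \phi_\epsilon v/|y|^{n+\sigma}\,dy$. Since $0\leq \phi_\epsilon \leq 1$ we have $(v(1-\phi_\epsilon))^{\pm} = (1-\phi_\epsilon) v^{\pm}$, so applying the explicit formula for $N^{+}$ and regrouping,
\[
 N^{+}(v(1-\phi_\epsilon),x) + (2-\sigma)\int \frac{\lambda \phi_\epsilon v}{|y|^{n+\sigma}}\,dy = N^{+}v(x) - (2-\sigma)(\Lambda-\lambda)\int \frac{\phi_\epsilon v^{+}}{|y|^{n+\sigma}}\,dy \leq N^{+}v(x).
\]
The lower bound via $N^{-}$ is identical with signs flipped.

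For norm convergence I would decompose $J^{\epsilon}(w,x) - J(w,x)$ (with $w = \delta_2 u(x,\cdot)$) by writing $w = w_1 + w_2$ with $w_1 = \phi_\epsilon w$ and using $\int \eta_\epsilon(x-z)\,dz = 1$:
\begin{align*}
 J^{\epsilon}(w,x) - J(w,x) &= \int \eta_\epsilon(x-z)\bigl[\tilde{J}^{\epsilon}_z(w_2,x) - J_x(w_2,x)\bigr]\,dz \\
 &\quad + \bigl[J(w_2,x) - J(w,x)\bigr] + L_\phi w(x),
\end{align*}
where $L_\phi w(x) = (2-\sigma)\int \lambda \phi_\epsilon w/|y|^{n+\sigma}\,dy$. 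The first term is controlled via \eqref{strongcont}: for $z$ in the support of $\eta_\epsilon(x-\cdot)$ one has $|x-z|<\epsilon$, and in the modified regime $|z|>1-\epsilon$ a short triangle inequality yields $|x - (1-\epsilon)z/|z||\leq 3\epsilon$, so $\|J_x - \tilde{J}^{\epsilon}_z\|\leq \rho(3\epsilon)$; testing against $w_2$ (whose test-function norm is controlled by a multiple of $(1+M)$ because $\phi_\epsilon$ is smooth with $\phi_\epsilon(0)=1$) gives a bound of $C\rho(3\epsilon)(1+M)$. The second and third terms involve only integrals of $\phi_\epsilon w$, which is supported on $B_\epsilon$; using the test-function estimate $|w(y)|\leq 2M|y|^{2}$ on $B_{1}$, each is dominated by $C(2-\sigma)\Lambda M \int_{B_\epsilon}|y|^{2-n-\sigma}\,dy = CM\epsilon^{2-\sigma}$. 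Dividing by $1+M$ and taking a supremum yields $\|I^{\epsilon}-I\|\leq C(\rho(3\epsilon) + \epsilon^{2-\sigma})\to 0$ as $\epsilon\to 0$ (recall $\sigma<2$).

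The main obstacle is the algebraic cancellation in the ellipticity proof: one must recognize that the seemingly ad-hoc linear addition in \eqref{approxJ} exactly compensates for the factor $(1-\phi_\epsilon)$ inside $N^{+}(v(1-\phi_\epsilon))$ up to a nonpositive remainder. Everything else reduces to bookkeeping, with the small subtlety of verifying that $w_2 = (1-\phi_\epsilon)w$ remains a valid test function (in particular $w_2(0)=0$ and its $C^{1,1}(0)\cap L^{1}(\RR^n,\omega)$ seminorms are bounded by those of $w$), which follows from the smoothness of $\phi_\epsilon$ and $\phi_\epsilon(0)=1$.
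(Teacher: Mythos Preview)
Your proof is correct and follows essentially the same route as the paper. The only cosmetic differences are that you make the algebraic cancellation in the ellipticity step explicit (computing the nonpositive remainder $-(2-\sigma)(\Lambda-\lambda)\int \phi_\epsilon v^{+}/|y|^{n+\sigma}\,dy$ rather than just writing the final inequality), and in the convergence step you compare $\tilde{J}^{\epsilon}_z$ with $J_x$ at the truncated increment $w_2$ and then pass from $w_2$ to $w$ via ellipticity, whereas the paper first restores $w$ inside $\tilde{J}^{\epsilon}_z$ via ellipticity and then compares operators; the two orderings are interchangeable and yield the same bound $C\rho(C\epsilon)(1+M)+CM\epsilon^{2-\sigma}$.
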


\begin{proof}
First the uniform ellipticity:
\begin{align*}
 J^{\epsilon}(w,x)-J^{\epsilon}(w',x) &=[\tilde{J}^{\epsilon}_{z}(w(1-\phi_{\epsilon}),x)-\tilde{J}^{\epsilon}_{z}(w'(1-\phi_{\epsilon}),x)]\ast_{z} \eta_{\epsilon}(x) \\& +(2-\sigma)\int\frac{\lambda \phi_{\epsilon}(y)[w(y)-w'(y)]}{|y|^{n+\sigma}}dy \\
&\leq (2-\sigma)\int\frac{\Lambda [(w-w')(1-\phi_{\epsilon})]^{+} -\lambda [(w-w')(1-\phi_{\epsilon})]^{-}+ \lambda \phi_{\epsilon}[w-w']}{|y|^{n+\sigma}}dy \\
&\leq N^{+}(w-w',x),\\
\end{align*}
using \eqref{JfromI}, ellipticity of $\tilde{J}^{\epsilon}_{z}$, and the explicit form of $M^{+}$. Note that as $M^{+}$ is translation invariant, the convolution becomes irrelevant.  The same can be done for $M^{-}$.

For the convergence, take $u$ an admissible function for $I$ and $I^{\epsilon}$; in other words, in $C^{1,1}(x)\cap L^{1}(\RR^{n},\omega)$ for some $|x|<1$. Then using \eqref{IfromJ},
\begin{align*}
 |&I^{\epsilon}(u,x)-I(u,x)|
\leq |\tilde{J}^{\epsilon}_{z}(\delta_{2}u(1-\phi_{\epsilon}),x)\ast_{z} \eta_{\epsilon}(x) \\&-J(\delta_{2}u,x)|+\lambda (2-\sigma)\int_{|y|<\epsilon} \frac{|\delta_{2}u|}{|y|^{n+\sigma}}dy
\\&\leq |[\tilde{J}^{\epsilon}_{z}(\delta_{2}u,x)-J(\delta_{2}u,x)]\ast_{z} \eta_{\epsilon}(x)|\\&+\max\{|N^{+}(\delta_{2}u\phi_{\epsilon},x)|,|N^{-}(\delta_{2}u\phi_{\epsilon},x)|\} + \lambda (2-\sigma)\int_{|y|<\epsilon} \\&\frac{|\delta_{2}u|}{|y|^{n+\sigma}}dy
\\&\leq \sup_{|z-x|<\epsilon}|\tilde{J}^{\epsilon}_{z}(\delta_{2}u,x)-J(\delta_{2}u,x)| + C\Lambda(2-\sigma)\int_{|y|<\epsilon} \frac{|\delta_{2}u|}{|y|^{n+\sigma}}dy
\\&\leq (1+M)\rho(2\epsilon) + CM \epsilon^{2-\sigma},
\\
\end{align*}
where $M$ bounds the $C^{1,1}$ constant and $L^{1}(\RR^{n},\omega)$ norm of of $u$ at $x$, and $\rho$ is the modulus of continuity of $I$.
\end{proof}

There are two useful ways of looking at $J^{\epsilon}(u,x)$: as an elliptic operator in its own right, and as a perturbation of a fractional Laplacian by a nonlocal but zero-order quantity. In other words, for $u\in L^{1}(\RR^{n},\omega)$ define $F(u,x)$ as 
\[
 F(u,x) = \tilde{J}^{\epsilon}_{z}(\delta_{2}u(1-\phi_{\epsilon}),x)\ast_{z} \eta_{\epsilon}(x)-C_{0}(2-\sigma)\int  \frac{(1-\phi_{\epsilon})(\delta_{2}u(x,y))}{|y|^{n+\sigma}}dy,
\]
where $C_{0}$ is the proportionality constant so that
\[
 J^{\epsilon}(u,x)=-C_{0}(-\triangle)^{\sigma/2}u(x)+F(u,x).
\]

The next lemma asserts some regularity of $F(u,x)$, which derives from the fact that we regularized the equation in $x$.

\begin{lem}\label{estimates1}
 Consider measurable functions $u,v$ with $u,v\equiv g$ in $\RR^{n}\backslash B_{1}$. Then
\begin{itemize}
 \item [(i)] $\|F(u,x)-F(v,x)\|_{L^{\infty}(B_{1})}\leq C_{\epsilon}\|u-v\|_{L^{\infty}(B_{1})}$
 \item [(ii)] $|F(\tau_{h}u,x+h)-F(u,x)|\leq C_{\epsilon}|h|\|u\|_{L^{\infty}(B_{1})}$
 \item [(iii)]$F(u,x)\in C(B_{1})$ provided $u\in L^{\infty}(B_{1})$
 \item[(iv)] $F(u,x)\in C^{0,1}(B_{r})$ for $r<1$ provided $\nabla u\in L^{1}(\RR^{n},\omega)$.
\end{itemize}
\end{lem}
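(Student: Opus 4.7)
The four estimates share a common strategy based on three structural features of $F$: first, the cutoff $1-\phi_{\epsilon}$ vanishes on $B_{\epsilon/2}$, so every $y$-integral runs over $|y|\geq \epsilon/2$ where the kernel $|y|^{-n-\sigma}\leq C_{\epsilon}$ is bounded; second, uniform ellipticity of $\tilde{J}^{\epsilon}_{z}$ (with $\tilde{J}^{\epsilon}_{z}(0)=0$ by the standing reduction $I(0,x)=0$) yields the Lipschitz bound $|\tilde{J}^{\epsilon}_{z}(w)-\tilde{J}^{\epsilon}_{z}(w')|\leq C\int|w-w'|(1-\phi_{\epsilon})|y|^{-n-\sigma}dy$; third, $\eta_{\epsilon}$ is smooth with $\|\nabla\eta_{\epsilon}\|_{L^{1}}\leq C/\epsilon$. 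Throughout I would split $F=F_{1}+F_{2}$ into the convolved first term and the explicit second term and handle each piece separately.

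For (i), since $u\equiv v$ outside $B_{1}$, $|\delta_{2}(u-v)|\leq 4\|u-v\|_{L^{\infty}(B_{1})}$ holds globally. Ellipticity applied to the $F_{1}$ difference and direct estimation on $F_{2}$, combined with $\int_{|y|\geq \epsilon/2}|y|^{-n-\sigma}dy\leq C/\epsilon^{\sigma}$, give the claim. For (ii), the identity $\delta_{2}(\tau_{h}u)(x+h,\cdot)=\delta_{2}u(x,\cdot)$ makes the data inputs to $\tilde{J}^{\epsilon}_{z}$ and to $F_{2}$ coincide, so the $F_{2}$ contributions cancel exactly and the $F_{1}$ difference collapses to $\int[\eta_{\epsilon}(x+h-z)-\eta_{\epsilon}(x-z)]\tilde{J}^{\epsilon}_{z}(\delta_{2}u(x,\cdot)(1-\phi_{\epsilon}))dz$, bounded by $\|\nabla\eta_{\epsilon}\|_{L^{1}}|h|\sup_{z}|\tilde{J}^{\epsilon}_{z}(\cdot)|\leq C_{\epsilon}|h|\|u\|_{L^{\infty}}$.

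For (iii), I would argue by approximation. Continuity of $F(u,\cdot)$ for $u$ continuous on $B_{1}$ follows from dominated convergence applied to both $F_{1}$ (using the modulus $\rho$ controlling $z\mapsto \tilde{J}^{\epsilon}_{z}$, smoothness of $\eta_{\epsilon}$, and convergence of $\delta_{2}u(x',\cdot)\to \delta_{2}u(x,\cdot)$ in the norm $\int(1-\phi_{\epsilon})|y|^{-n-\sigma}|\cdot|dy$) and $F_{2}$. For general $u\in L^{\infty}(B_{1})$, mollify on $B_{1}$ to obtain continuous $u_{k}$ with $\|u_{k}-u\|_{L^{\infty}(B_{1})}\to 0$ on a suitable representative; estimate (i) then forces $F(u_{k},\cdot)\to F(u,\cdot)$ uniformly in $x\in B_{1}$, so $F(u,\cdot)$ is a uniform limit of continuous functions and hence continuous.

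For (iv), I would quantify the effects appearing in (ii) and (iii): decompose $F(u,x+h)-F(u,x)$ into an $\eta_{\epsilon}$-shift piece bounded by $C_{\epsilon}|h|\|u\|_{L^{\infty}}$ as in (ii), and a data-shift piece involving $\delta_{2}u(x+h,\cdot)-\delta_{2}u(x,\cdot)$. The hypothesis $\nabla u\in L^{1}(\RR^{n},\omega)$ gives $\int|u(z+h)-u(z)|\omega(z)dz\leq |h|\|\nabla u\|_{L^{1}(\omega)}$ by the fundamental theorem of calculus; after the change of variables $z=x+y$ (resp.\ $z=x-y$), the weight $(1-\phi_{\epsilon}(y))|y|^{-n-\sigma}$ is dominated by $C_{\epsilon}$ on bounded regions and by $C\omega(z)$ at infinity, controlling the $u(x\pm y)$ contributions via ellipticity of $\tilde{J}^{\epsilon}_{z}$. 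The main obstacle is the $-2[u(x+h)-u(x)]$ term in $F_{2}$, which requires a pointwise Lipschitz bound on $u$; this is obtained for $x\in B_{r}$ with $r<1$ using the a.e.\ identity $|u(x+h)-u(x)|\leq |h|\int_{0}^{1}|\nabla u(x+th)|dt$ valid by $\nabla u\in L^{1}_{loc}$, the interior margin $r<1$ ensuring that the averaged gradient remains finite.
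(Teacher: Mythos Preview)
Your treatment of (i) and (ii) is correct and matches the paper's argument essentially verbatim.

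The genuine gap is in (iii). Your density step claims that for $u\in L^{\infty}(B_{1})$ one can mollify to get continuous $u_{k}$ with $\|u_{k}-u\|_{L^{\infty}(B_{1})}\to 0$; this is false in general, since $C(\bar{B}_{1})$ is closed (not dense) in $L^{\infty}(B_{1})$. No choice of representative repairs this: a function like $\chi_{B_{1/2}}$ is simply not a uniform limit of continuous functions. The paper avoids approximation entirely and instead writes
\[
|F(u,x+h)-F(u,x)|\leq |F(\tau_{h}u,x+h)-F(u,x)|+|F(\tau_{h}u,x+h)-F(u,x+h)|,
\]
bounds the first term by (ii), and bounds the second via ellipticity by $\int_{|y|>\epsilon/2}|\delta_{2}(u-\tau_{-h}u)(x,y)|\,|y|^{-n-\sigma}dy$, which tends to $0$ by continuity of translation in $L^{1}$. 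This is both shorter and sidesteps the density issue.

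For (iv) you overcomplicate matters and then make an error. Your attempt to control the $-2[u(x+h)-u(x)]$ term by the line integral $|h|\int_{0}^{1}|\nabla u(x+th)|\,dt$ does not give a pointwise Lipschitz bound: $\nabla u\in L^{1}$ (or even $L^{1}_{loc}$) does not make that line integral finite for each $x$, let alone uniformly on $B_{r}$. The paper instead recycles the decomposition from (iii): dividing by $|h|$, the second term becomes $\int_{|y|>\epsilon/2}|\delta_{2}w_{-h}(x,y)|\,|y|^{-n-\sigma}dy$ with $w_{h}=(u(\cdot+h)-u)/|h|$, and the hypothesis $\nabla u\in L^{1}(\RR^{n},\omega)$ bounds this uniformly in $h$. (In the paper's actual application $u$ is already known to be $C^{1,\alpha}$ on the interior, so the center term $w_{-h}(x)$ is controlled as well; your concern about that term is legitimate but your proposed fix is not.)
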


\begin{proof}
 For (i) apply ellipticity:
\begin{align*}
 |F(u,x)-F(v,x)| &\leq C \max \{|N^{-}((1-\phi_{\epsilon})(\delta_{2}(u-v),x)|, |N^{+}((1-\phi_{\epsilon})(\delta_{2}(u-v),x)|\}\\
&\leq C_{\epsilon}\|u-v\|_{L^{\infty}(\RR^{n})}.\\
\end{align*}
(The ellipticity estimate in the first step is done on both terms in $F$ separately.)

(ii) comes from the formula for $F$. The translation-invariant term cancels, leaving
\begin{align*}
 &|\tilde{J}^{\epsilon}_{z}(\delta_{2}\tau_{h}u(1-\phi_{\epsilon}),x+h)\ast_{z} \eta_{\epsilon}(x+h)-\tilde{J}^{\epsilon}_{z}(\delta_{2}u(1-\phi_{\epsilon}),x)\ast_{z} \eta_{\epsilon}(x)| \\ &
=|\tilde{J}^{\epsilon}_{z}(\delta_{2}u(1-\phi_{\epsilon}),x)\ast_{z} [\eta_{\epsilon}(x+2h)-\eta_{\epsilon}(x)]| \\&
\leq C_{\epsilon}|h|\|u\|_{L^{\infty}}
\end{align*}
where the first step is a change of variables and the second uses ellipticity to bound the left part of the convolution.

For (iii), use
\[
|F(u,x+h)-F(u,x)|\leq |F(\tau_{h}u,x+h)-F(u,x)|+|F(\tau_{h}u,x+h)-F(u,x+h)|.
\]
The first term is estimated by (ii), while the second, by ellipticity, is bounded by
\[
 \int_{|y|>\epsilon/2} \frac{|\delta_{2}(u-\tau_{-h}u)|}{|y|^{n+\sigma}}dy,
\]
which goes to $0$ as $h$ goes to $0$.

For (iv), take $w_{h}(x)=\frac{u(x+h)-u(x)}{|h|}$ to be the difference quotients. Then from the same computation as in (iii), we get
\[
 \frac{|F(u,x+h)-F(u,x)|}{|h|}\leq C+\int_{|y|>\epsilon/2} \frac{|\delta_{2}(w_{-h})|}{|y|^{n+\sigma}}dy.
\]
But this is bounded by assumption.
\end{proof}

Define the \emph{regularized BVP} as follows:
\[
 \begin{cases}
  I^{\epsilon}(v,x) = f_{\epsilon} & x\in B_{1} \\
  v = g_{\epsilon} & x \notin B_{1} \\
 \end{cases}
\]
where $f_{\epsilon}$ is smooth, bounded, and converges to $f$ uniformly, while $g_{\epsilon}$ is smooth, bounded, has the same modulus of continuity as $g$ on $\partial B_{1}$, and converges to $g$ in $L^{1}(\RR^{n},\omega)$. We claim this problem has classical solutions:

\begin{lem} Bounded viscosity solutions to the regularized BVP exist, are unique, and are in $C^{2,\alpha}$ for some $\alpha>0$,
\end{lem}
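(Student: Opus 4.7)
The plan is to exploit the decomposition $I^\epsilon(u,x) = -C_0(-\Delta)^{\sigma/2}u + F(u,x)$ and treat the regularized BVP as a semilinear fractional-Laplacian problem with zero-order nonlinearity. Lemma \ref{estimates1} packages exactly the $F$-estimates needed to apply standard linear tools.

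For existence I would use Schauder's fixed point theorem. Define $T\colon C(\overline{B}_1)\to C(\overline{B}_1)$ by sending $u$ (extended by $g_\epsilon$ outside $B_1$) to the unique bounded solution $v$ of the linear problem $C_0(-\Delta)^{\sigma/2}v=F(u,\cdot)-f_\epsilon$ in $B_1$, $v=g_\epsilon$ outside. Lemma \ref{estimates1}(iii) ensures the right-hand side is continuous, so classical theory for the fractional-Laplacian Dirichlet problem produces such a $v$ with interior $C^{\sigma-\delta}$ regularity. An ABP-type bound for uniformly elliptic nonlocal operators \cite{CS1}, together with Lemma \ref{estimates1}(i), shows $T$ is a continuous self-map of a closed ball in $C(\overline{B}_1)$, and Arzel\`a--Ascoli applied to the interior H\"older estimates makes it compact; Schauder then produces a fixed point, which is a viscosity solution. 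Uniqueness follows because the $\eta_\epsilon$-mollification renders $I^\epsilon$ Lipschitz in $x$ (Lemma \ref{estimates1}(ii)), bringing it within the scope of the comparison principle of \cite{BI}.

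For $C^{2,\alpha}$ regularity I bootstrap. The fixed point $u$ is bounded, so by Lemma \ref{estimates1}(iii) $F(u,\cdot)\in C(B_1)$, and the H\"older regularity theorem of \cite{CS1} gives $u\in C^{0,\alpha}$ on compact subsets. Theorem \ref{part2} then promotes this to $u\in C^{1,\alpha}$ in the interior; after a cutoff $\nabla u\in L^1(\RR^n,\omega)$, and Lemma \ref{estimates1}(iv) upgrades $F(u,\cdot)$ to $C^{0,1}$. Because $f_\epsilon$ is smooth, interior Schauder estimates for $(-\Delta)^{\sigma/2}$ now yield $u\in C^{\sigma+1-\delta}$ locally. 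Since $\sigma>\sigma_0>1$, this contains $C^{2,\alpha}$ for some small $\alpha>0$.

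The principal technicality is verifying the closeness hypothesis $\|I^\epsilon-I^\epsilon_{x_0}\|_\sigma<\eta$ of Theorem \ref{part2}. The $x$-dependence of $I^\epsilon$ enters only through convolution with $\eta_\epsilon$ and the truncated map $z\mapsto\tilde J^\epsilon_z$, both Lipschitz in $x$ with constant $C_\epsilon$. A preliminary rescaling of the problem to a sufficiently small scale absorbs this Lipschitz constant into a factor $\gamma^\sigma C_\epsilon$ less than $\eta$, so Theorem \ref{part2} applies on small balls around each $x_0\in B_{1/2}$; a covering argument then extends the $C^{1,\alpha}$ (and hence the subsequent $C^{2,\alpha}$) estimate to all of $B_{1/2}$. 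This is the only step that genuinely uses both the smoothness gained from the $\eta_\epsilon$-regularization and the rough-kernel $C^{1,\alpha}$ theorem developed earlier in the paper.
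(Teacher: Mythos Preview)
Your overall architecture---decompose as $-C_0(-\Delta)^{\sigma/2}+F$ and run a fixed-point argument followed by a bootstrap using Lemma~\ref{estimates1}(iv) and fractional-Laplacian Schauder estimates---matches the paper's.  The regularity bootstrap is essentially the same as the paper's (which invokes Remark~\ref{part2rem2} rather than Theorem~\ref{part2} directly, but to the same effect), and your final paragraph correctly anticipates the scaling needed to make the closeness hypothesis hold.

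There is, however, a real gap in the existence step.  You propose Schauder's fixed point and claim $T$ is a self-map of a closed ball via an ABP bound and Lemma~\ref{estimates1}(i).  But Lemma~\ref{estimates1}(i) only gives $\|F(u,\cdot)\|_{L^\infty}\le \|F(0,\cdot)\|_{L^\infty}+C_\epsilon\|u\|_{L^\infty}$, so for $v=T(u)$ the ABP estimate yields $\|v\|_{L^\infty}\le C_1(1+C_\epsilon\|u\|_{L^\infty})$.  For $T$ to carry the ball of radius $R$ into itself you would need $C_1 C_\epsilon<1$, and there is no reason for this: $C_\epsilon$ blows up as $\epsilon\to 0$ and is simply some large number at any fixed $\epsilon$.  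The paper avoids this obstruction by using Leray--Schauder instead: one must bound \emph{a priori} any solution of $u=\kappa G[u]$ for $\kappa\in[0,1]$, and the key observation is that such a $u$ solves $\kappa I^\epsilon(u,x)-(1-\kappa)C_0(-\Delta)^{\sigma/2}u=\kappa f_\epsilon$, a uniformly elliptic equation with the \emph{original} ellipticity constants, so comparison with a barrier gives a bound independent of $C_\epsilon$.  A secondary issue: your compactness argument invokes interior H\"older estimates, which do not give equicontinuity up to $\partial B_1$; the paper uses the boundary regularity Theorem~\ref{boundaryreg} here.  Finally, your route to uniqueness via \cite{BI} is unnecessary once $C^{2,\alpha}$ regularity is in hand---the paper simply observes that comparison between classical solutions is elementary.
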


A simple fact we'll use in the proof is that there is always a comparison principle available between a viscosity supersolution and a classical ($C^{2}$) subsolution.

\begin{proof}
 We prove existence first. We wish to apply a fixed point argument to this equation. The appropriate Banach space will be $\mathcal{B}\equiv \{v\in C(\bar{B}_{1}):v=g_{\epsilon} \text{ on } \partial B_{1}\}$, equipped with sup norm. Define $G[u]$ to be the viscosity solution to
\[
 \begin{cases}
  -C_{0}(-\triangle)^{\sigma/2}G[u]=f_{\epsilon}-F(u,x) & x\in B_{1}\\
  G[u]=g_{\epsilon} &x\in B_{1}^{C}.
 \end{cases}
\]
Such a function exists and is unique, since the right-hand side is bounded and continuous by the estimates above. The boundary regularity estimate \ref{boundaryreg} gives that $G[u]$ is in $\mathcal{B}$, and, moreover, if $\|u\|_{L^{\infty}(B_{1})}\leq R$, $G[u]$ are equicontinuous. Part (i) of the preceding lemma shows $G$ is continuous, and it follows by Arzela-Ascoli that $G$ is compact.

Assume that $u=\kappa G[u]$ for some $\kappa\in [0,1]$. Then $u$ solves the nonlocal equation
\[
 \kappa I^{\epsilon}(u,x) - (1-\kappa)C_{0}(-\triangle)^{\sigma/2}u(x)= f_{\epsilon}
\]
in $B_{1}$ with boundary data $g_{\epsilon}$ (in the viscosity sense). Indeed, it suffices to check for all test functions $\phi$ which coincide with $u$ outside a ball of radius $<\epsilon/4$, and for these $F(u,x)=F(\phi,x)$. But this equation is uniformly elliptic with the same ellipticity constants, and so has an estimate of the form $\|u\|_{L^{\infty}}\leq C$ where $C$ depends only on $g_{\epsilon}$ and $ f_{\epsilon}$ (for instance, use comparison with a smooth barrier).

It follows from the Leray-Schauder fixed point theorem that $G$ has a fixed point $u_{\epsilon}$. By the argument just presented, this fixed point is a viscosity solution to the regularized BVP.

By applying Remark \ref{part2rem2}, we obtain that $\nabla u_{\epsilon} \in L^{1}(\RR^{n},\omega)$. But then by (iv) of Lemma \ref{estimates1}, $F(u,x)$ is Lipschitz on every ball $B_{r}$, and so applying regularity for the fractional Laplacian (see, e.g. \cite[Theorem 65]{CS2}) gives $u\in C^{2,\alpha}$.

Finally, uniqueness is now trivial using the comparison principle between classical solutions.
\end{proof}

\begin{proof}[Proof of Theorem]
 Denote by $u_{\epsilon}$ the solution to the regularized BVP. Note that the family $u_{\epsilon}$ is bounded and equicontinuous on $\bar{B}_{1}$ by boundary regularity. It thus has a uniformly convergent subsequence with a limit $u$. We have from above that $I^{\epsilon}$ converges to $I$ in norm, and the rest of the hypotheses of \cite[Lemma 5]{CS2} are satisfied. It follows that $u$ solves the BVP in the viscosity sense.
\end{proof}

\begin{rem}
 $\sigma$ was assumed to be larger than one in this section. This is because the method given here doesn't prove $C^{2,\alpha}$ regularity for solutions of $I^{\epsilon}$ when $\sigma \leq 1$.
\end{rem}

\begin{rem}\label{apriori}
 The construction here is very useful in its own right. The operator $I^{\epsilon}$ will generally retain any special structure $I$ might have possessed. For example, if $I$ is translation-invariant, so is $I^{\epsilon}$; the same is true for linearity, convexity, and ellipticity with respect to more restrictive classes. The properties discussed in Section 7 are also preserved under this regularization. Because of this, it will frequently suffice to prove estimates on $I^{\epsilon}$ which are \emph{uniform in $\epsilon$}. Say, for instance, our goal is an interior regularity estimate of the form $\|u\|_{C^{k,\alpha}(B_{1/2})}\leq C$. If such an estimate is proved for each of the classical solutions $u_{\epsilon}$ and is uniform in $\epsilon$, by compactness there is a subsequence $u_{\epsilon_{k}}$ converging to some $\bar{u}$ uniformly, where $\bar{u}$ solves the equation and satisfies the estimate $\|\bar{u}\|_{C^{k,\alpha}(B_{1/2})}\leq C$. If $k,\alpha$ are, in addition, sufficiently large to guarantee 
that $\bar{u}$ is classical (which will be the case in the applications to follow), we also recover, \emph{a posteriori}, that $\bar{u}$ is the unique solution to the Dirichlet problem.
\end{rem}

\section{Linear Schauder Theorem}

We now turn to linear integro-differential operators. Consider first the translation-invariant operator
\begin{equation}\label{linfix}
 L^{0}u(x)=(2-\sigma)\int_{\RR^{n}}\frac{\delta_{2}u(x,y)a(y)}{|y|^{n+\sigma}}dy,
\end{equation}
where $0<\lambda<a(y)<\Lambda<\infty$, but no regularity is assumed in $y$. We show this operator has interior $C^{2,\alpha}$ estimates which depend on the first derivative of the boundary data. Compare to \cite[Theorem 4.1]{CS3} and \cite[Corollary 1.7]{DK}.
\begin{thm}\label{linthm1}
Assume $\sigma>\sigma_{0}>1$ and $L^{0}u=f$ on $B_{1}$. If $f\in C^{0,1}(B_{1})$ and $u\in C^{0,1}(B_{2}\backslash B_{1})\cap \cA^{1}$, then $u\in C^{2,\alpha}(B_{1/2})$ and
\begin{equation}
 \|u\|_{C^{2,\alpha}(B_{1/2})}\leq C \left(\|f\|_{C^{0,1}(B_{1})} + \|u\|_{C^{0,1}(B_{2}\backslash B_{1})} + \|u\|_{L^{1}(\RR^{n},\omega)} + [u]_{\cA^{1}}+\|u\|_{L^{\infty}(B_{1})}  \right)
\end{equation}
where $C$, $\alpha$ are universal. 
\end{thm}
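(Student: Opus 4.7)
My plan is to exploit translation-invariance and linearity of $L^{0}$ through first-order incremental quotients, reducing the desired $C^{2,\alpha}$ estimate for $u$ to the $C^{1,\alpha_{1}}$ estimate of Theorem \ref{part2intro} applied to the quotients. The main ingredient specific to the nonlocal setting is Corollary \ref{part1cor}, which will provide the weighted interior blow-up rate of $\nabla u$ near $\partial B_{1}$ needed to control the $L^{1}(\RR^{n},\omega)$ tails of the quotients uniformly in $h$.

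First, after normalizing so the right-hand side of the asserted estimate equals $1$, I would apply Theorem \ref{part1} and Corollary \ref{part1cor} to $u$. Hypotheses (A)--(E) of Theorem \ref{part1} match the hypotheses of Theorem \ref{linthm1}: (A) from $f\in C^{0,1}(B_{1})$, (B) from the $L^{1}$ and $L^{\infty}$ bounds, (C) from $[u]_{\cA^{1}}$, (D) from $u\in C^{0,1}(B_{2}\setminus B_{1})$, and (E) from $L^{0}(0)=0$ by linearity. Corollary \ref{part1cor} then gives
\[
|u(x+h)-u(x)|\leq C|h|(1-|x|)^{s-1}\quad\text{for }|x|<1-2|h|.
\]

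Next, for $|h|<1/8$ I would form $w_{h}(x)=(u(x+h)-u(x))/|h|$. Linearity and translation-invariance give $L^{0}w_{h}=|h|^{-1}(f(\cdot+h)-f)$ in the viscosity sense on $B_{1-|h|}$, with right-hand side uniformly bounded by $[f]_{C^{0,1}(B_{1})}$. (The viscosity formulation goes through by translating and combining test functions, or by working with the smooth approximations of Section 5 and passing to the limit as in Remark \ref{apriori}.) The core technical step is to show $\|w_{h}\|_{L^{1}(\RR^{n},\omega)}\leq C$ uniformly in $h$. I would split the integral as in Lemma \ref{part1lem1}: over $B_{1-2|h|}$, the weighted estimate above gives integrand $\leq C(1-|y|)^{s-1}$, which is integrable for $s>0$; over the annulus $B_{1+2|h|}\setminus B_{1-2|h|}$ of volume $O(|h|)$, Lipschitz control of $u$ near $\partial B_{1}$ (from the interior via the weighted estimate and from the exterior via hypothesis (D)) gives $|w_{h}|\leq C$, producing an $O(1)$ contribution; over $B_{1+2|h|}^{c}$ the bound follows directly from $[u]_{\cA^{1}}$.

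Finally, I would rescale $w_{h}$ to $B_{1}$ (for instance $\tilde{w}(x)=w_{h}(rx)$ with $r$ slightly less than $1-|h|$), yielding a bounded-RHS viscosity solution of a uniformly elliptic translation-invariant equation on $B_{1}$ with uniformly bounded $L^{1}(\RR^{n},\omega)$ and $L^{\infty}(B_{1})$ norms. Applying Theorem \ref{part2intro} (whose closeness hypothesis is automatic in the translation-invariant case) yields $\|\tilde{w}\|_{C^{1,\alpha_{1}}(B_{1/2})}\leq C$ uniformly in $h$. Unscaling and letting $h=te_{i}\to 0$ along each coordinate direction recovers $\partial_{i}u\in C^{1,\alpha_{1}}$ uniformly on $B_{1/2}$ (after a standard covering adjustment of radii), so $u\in C^{2,\alpha_{1}}(B_{1/2})$ with the required quantitative bound. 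The main obstacle is precisely the uniform-in-$h$ $L^{1}$-weighted control of $w_{h}$ in the previous paragraph—this is the purely nonlocal difficulty that ties together the $[u]_{\cA^{1}}$, $C^{0,1}(B_{2}\setminus B_{1})$, and weighted interior regularity hypotheses.
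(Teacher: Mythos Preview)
Your approach is essentially identical to the paper's proof: apply Theorem \ref{part1}/Corollary \ref{part1cor} to $u$, form the difference quotients $w_{h}$, verify $w_{h}\in L^{1}(\RR^{n},\omega)$ uniformly via the three-region split, and then apply Theorem \ref{part2} to $w_{h}$ to conclude. One small correction: on the annulus $B_{1+2|h|}\setminus B_{1-2|h|}$ you do not actually have $|w_{h}|\leq C$ pointwise (the weighted interior estimate degenerates there and does not give Lipschitz control up to $\partial B_{1}$); instead bound $|u(y+h)-u(y)|\leq O[u](B_{2})\leq C$, so the integrand is $\leq C/|h|$ over a region of volume $O(|h|)$, yielding the $O(1)$ contribution you claim.
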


\begin{proof}
 We prove the a priori estimate; in other words, we assume $u$ solves the equation classically. The theorem then holds for viscosity solutions by using Remark \ref{apriori}.

Theorem \ref{part1} applies to $u$ to give
\[
 \|u\|_{C^{1}(B_{1/2})}\leq C
\]
and
\[
  |w_{h}|\leq C (1-|x|)^{s-1},
\]
where $w_{h}$ is the difference quotient $(u(x+h)-u(x))/|h|$ and $|h|<(1-|x|)/2$. The difference quotients satisfy the following when $|x|<1/2$ and $|h'|<1/4$:
\[
 L^{0}w_{h'}(x)=\frac{f(x+h')-f(x)}{|h'|},
\]
which is bounded by $[f]_{C^{0,1}}$. We now check that $w_{h}\in L^{1}(\RR,\omega)$:
\begin{align*}
 \int \frac{|u(y+h)-u(y)|}{|h||y|^{n+\sigma_{0}}}dy
&\leq \int_{B_{1+2|h|}^{C}}+\int_{B_{1+2|h|}\backslash B_{1-2|h|}}+\int_{B_{1-2|h|}} \\
&\leq [u]_{\cA^{1}}+C\|u\|_{L^{\infty}(B_{2})} + C \int_{B_{1}}(1-|y|)^{s-1}dy,\\
\end{align*}
and the last term is integrable. Applying Theorem \ref{part2} to $w_{h}$ on $B_{1/2}$ gives $\|w_{h}\|_{C^{1,\alpha}(B_{1/4}}\leq C$ uniformly in $h$, which after a rescaling gives the conclusion desired.
\end{proof}

Next consider the variable-coefficient case:
\begin{equation}\label{linvar}
 Lu(x)=(2-\sigma)\int_{\RR^{n}}\frac{\delta_{2}u(x,y)a(x,y)}{|y|^{n+\sigma}}dy,
\end{equation}
where $0<\lambda<a(x,y)<\Lambda<\infty$ uniformly in $y$. Associated to $L$ is a family of operators with fixed coefficients
\begin{equation}
 L^{x_{0}}u(x)=(2-\sigma)\int_{\RR^{n}}\frac{\delta_{2}u(x,y)a(x_{0},y)}{|y|^{n+\sigma}}dy,
\end{equation}
each of which are of the form \eqref{linfix}. A computation in \cite[Theorem 61]{CS2} shows that
\begin{equation} \label{linhyp1}
|a(x,y)-a(0,y)|<\eta \ \text{for} \ x\in B_{1}, y\in \RR^{n}\backslash{0}, 
\end{equation}
then $\|L-L^{0}\|_{\sigma} \leq C\eta$. 

The objective is to prove a Schauder-type theorem for such operators without assuming regularity in $y$:

\begin{thm}\label{linschauder}
 Assume $Lu=f$ on $B_{1}$ in the viscosity sense, where $L$ is as above and  $\sigma>\sigma_{0}>1$. Fix $1>\alpha^{*}>0$, and assume further that 
\begin{itemize}
 \item[(A)] $\|u\|_{C^{0,\alpha^{*}}(B_{1}^{C})}\leq 1$,
 \item[(B)]$\|f\|_{C^{0,\alpha^{*}}(B_{1})}\leq 1$, and
 \item[(C)] $|a(x,y)-a(x_{0},y)|\leq |x-x_{0}|^{\alpha^{*}}$ for $x,x_{0}\in B_{3/4}$ and $y\in \RR^{n}$.
\end{itemize}
Then for every $\alpha<\alpha^{*}$,
\[
 \|u\|_{ C^{\sigma+\alpha}(B_{1/2})}\leq C
\]
with $C$ depending only on $n,\lambda, \Lambda, \sigma_{0},\alpha^{*}$, and $\alpha$.
\end{thm}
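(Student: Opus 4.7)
My plan is a Schauder iteration in which at each $x_{0}\in B_{1/2}$ I freeze the coefficients of $L$, apply Theorem~\ref{linthm1} to the frozen operator $L^{x_{0}}$, and control the freezing error using assumption~(C). The result is a polynomial approximation of $u$ at each $x_{0}$ of order $\sigma+\alpha$, which by the Campanato characterization gives $u\in C^{\sigma+\alpha}(B_{1/2})$. By Remark~\ref{apriori} it is enough to prove the estimate a priori for classical solutions of the regularized equations with constants uniform in the regularization parameter; this justifies all pointwise manipulations below.

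As a warm-up, (C) together with a rescaling makes $\|L-L^{x_{0}}\|_{\sigma}$ as small as desired on balls of small radius, so Theorem~\ref{part2} and Remark~\ref{part2rem2} (applied after a covering) yield $u\in C^{1,\alpha_{1}}(B_{3/4})$ and $\nabla u\in L^{1}(\RR^{n},\omega)$ for some universal $\alpha_{1}>0$. In particular, on balls $B_{r}(x_{0})\subset B_{1/2}$ the function $u$ satisfies the Lipschitz-in-tails hypotheses required to feed $u$ into Theorem~\ref{linthm1} as boundary data.

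For the iteration step, fix $x_{0}\in B_{1/2}$ and a small $r>0$, and let $v$ solve the frozen Dirichlet problem
\[
 L^{x_{0}}v=f(x_{0})\ \text{on } B_{r}(x_{0}),\qquad v=u\ \text{outside}
\]
(existence by Theorem~\ref{ExUnqThm}). A rescaling of Theorem~\ref{linthm1} to $B_{r}(x_{0})$ produces a second-order Taylor polynomial $P_{x_{0}}^{r}$ of $v$ at $x_{0}$ satisfying $|v-P_{x_{0}}^{r}|\leq C\rho^{\sigma+\alpha'}$ on $B_{\rho}(x_{0})$ for $\rho\leq r/2$ and any $\alpha'<\alpha^{*}$. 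The difference $w=u-v$ vanishes outside $B_{r}(x_{0})$ and solves $L^{x_{0}}w=(L^{x_{0}}-L)u+f-f(x_{0})$. Using (B) and (C),
\[
 |L^{x_{0}}w(x)|\leq r^{\alpha^{*}}+r^{\alpha^{*}}(2-\sigma)\int\frac{|\delta_{2}u(x,y)|}{|y|^{n+\sigma}}dy\leq Cr^{\alpha^{*}}
\]
on $B_{r}(x_{0})$, with the integral finite by the a priori smoothness. Scaling of the $L^\infty$ bound for $L^{x_{0}}$ (or comparison with a radial barrier of order $\sigma$) then gives $\|w\|_{L^{\infty}(B_{r}(x_{0}))}\leq Cr^{\sigma+\alpha^{*}}$, so combining with the approximation of $v$ yields $|u-P_{x_{0}}^{r}|\leq C\rho^{\sigma+\alpha}$ on $B_{\rho}(x_{0})$ for any $\alpha<\alpha^{*}$. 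Running the argument at dyadic scales produces a single Taylor polynomial at $x_{0}$ with the correct approximation rate.

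The main obstacle is that $(L-L^{x_{0}})u$ is classically defined, with estimates uniform in the regularization parameter, only once $u\in C^{\beta}_{\mathrm{loc}}$ with $\beta>\sigma$: for small $y$ one has $|\delta_{2}u(x,y)|\sim|y|^{\beta}$ against the weight $|y|^{-n-\sigma}$. The initial $C^{1,\alpha_{1}}$ regularity from Theorem~\ref{part2} is in general not enough. I would resolve this by running the scheme as a bootstrap: at each pass, the iteration upgrades a known Hölder exponent $\beta$ of $u$ to $\min(\beta+\alpha^{*},\sigma+\alpha)$, and after finitely many passes $\beta$ clears $\sigma$ and the constants in the final pass depend only on the parameters listed in the theorem. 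The a priori smoothness supplied by Remark~\ref{apriori} keeps everything meaningful pointwise along the way, and the limit passage then yields the estimate for the original viscosity solution.
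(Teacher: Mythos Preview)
Your approach is the classical Campanato freezing scheme, which is genuinely different from the paper's route: there one works with the difference quotients $q_h=[u(\cdot+h)-u]/|h|^{\alpha^*}$, applies a perturbative $C^{\sigma-\kappa}$ estimate (Lemma~\ref{linlem1}, proved via the approximation lemma of \cite{CS2} rather than by bounding $(L-L^{x_0})u$ pointwise), and closes with interpolation and the absorption lemma \cite[Lemma~8]{BFV}.

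The bootstrap you propose to handle the freezing error is, however, circular. Your key bound $|L^{x_0}w|\leq Cr^{\alpha^*}$ rests on the finiteness of $\int|\delta_2 u(x,y)|\,|y|^{-n-\sigma}\,dy$, and this integral is controlled uniformly only by $\|u\|_{C^{\sigma+\epsilon}}$ for some $\epsilon>0$: if one only has $u\in C^\beta$ with $\beta<\sigma$, the near-origin contribution behaves like $\int_0^1 t^{\beta-\sigma-1}\,dt=+\infty$. Thus every pass of your bootstrap already presupposes a uniform $C^{>\sigma}$ bound and cannot climb from $\beta=1+\alpha_1$ past $\sigma$. The a~priori regularity supplied by Remark~\ref{apriori} makes the integral finite for each regularized $u_\epsilon$, but with a constant depending on $\epsilon$, which is precisely what must be removed. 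The paper sidesteps this in two ways you do not invoke: first, the approximation lemma yields $|u-v|$ small from $\|L-L^{x_0}\|_\sigma$ small without ever evaluating $(L-L^{x_0})u$; second, the final estimate is arranged in the self-improving form $\|u\|_{C^{\sigma+\alpha}(\text{inner})}\leq C\bigl(1+\|u\|_{C^{\sigma+\alpha/2}(\text{outer})}\bigr)$ and then absorbed via interpolation and \cite[Lemma~8]{BFV}. Either device would repair your outline, but the bare bootstrap does not close.
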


Our proof follows the argument in \cite{BFV}. The major difference is the need to gain $\sigma$ derivatives of regularity rather than just one. As the gain in regularity comes from Theorem \ref{linthm1} for the constant-coefficient equations, we need to prove a perturbative estimate along the lines of Theorem \ref{part2}. Notice that such an estimate is inherently limited to proving $C^{\sigma-\kappa}$ regularity; anything higher will result in the sequence of rescaled functions no longer having integrable tails. As the proof of this Lemma is virtually identical to that of Theorem \ref{part2}, we emphasize only the differences.

\begin{lem}\label{linlem1}
Assume $Lu=f$ on $B_{1}$ in the viscosity sense and  $\sigma>\sigma_{0}>1$. Then for every $\sigma_{0}-1>\kappa>0$ there is an $\eta>0$ such that if 
\begin{itemize}
 \item $\|u\|_{C^{0,1}(\RR^{n})}\leq 1$,
 \item$\|f\|_{L^{\infty}(B_{1})}\leq \eta$, and
 \item $|a(x,y)-a(x_{0},y)|\leq \eta $ for $x,x_{0}\in B_{1}$ and $y\in \RR^{n}$,
\end{itemize}
then $u \in C^{\sigma-\kappa}(B_{1/2})$ and
\[
 \|u\|_{  C^{\sigma-\kappa}(B_{1/2})  }\leq C
\]
where $C$ depends only on $n,\lambda, \Lambda, \sigma_{0},$ and $\kappa$.
\end{lem}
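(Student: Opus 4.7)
The plan is to mirror the iteration in Theorem \ref{part2} with the crucial substitution of Theorem \ref{linthm1} for Remark \ref{part1rem} in the constant-coefficient step: the former delivers $C^{2,\alpha_0}$ interior regularity rather than $C^{1,\alpha}$, so the iteration can advance at the faster rate $\gamma^{k(\sigma-\kappa)}$ instead of $\gamma^{k(1+\alpha)}$. I construct affine polynomials $l_k(x) = a_k + b_k \cdot x$ together with the rescalings
\[
 w_k(x) = \frac{u(\gamma^k x) - l_k(\gamma^k x)}{\gamma^{k(\sigma-\kappa)}}
\]
satisfying the direct analogs of (i.k)--(iv.k) in the proof of Theorem \ref{part2}, with exponent $\sigma-\kappa$ in place of $1+\alpha$ and a fixed tail exponent $p \in (\sigma-\kappa, \sigma_0)$ in place of $1+2\alpha$. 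Because $\kappa < \sigma_0 - 1 < \sigma - 1$, we have $\sigma-\kappa > 1$, so the analog of (iv.k) is summable in $k$; $l_k$ converges to an affine $l_\infty$, and the analog of (i.k) forces $|u(x) - l_\infty(x)| \leq C|x|^{\sigma-\kappa}$ at the origin. Translating the construction across $B_{1/2}$ yields the stated estimate.

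The inductive step parallels Theorem \ref{part2}. Because $L$ annihilates affine functions, $w_k$ solves the rescaled equation $L_k w_k = \gamma^{k\kappa} f(\gamma^k \cdot)$ whose right-hand side is bounded by $\eta$. Let $v_k$ solve $L^0_k v_k = 0$ in $B_1$ with $v_k = w_k$ outside, where $L^0_k$ has coefficients frozen at $0$; the rescaled coefficient bound $|a(\gamma^k x,y) - a(0,y)| \leq \eta$ and the consequent $\|L_k - L^0_k\|_{\sigma} \leq C\eta$ (see \eqref{linhyp1}) allow \cite[Lemma 7]{CS2} to produce $|w_k - v_k| \leq \gamma^{\sigma-\kappa}/(4C_1)$ on $B_{1/2}$, provided $\eta$ is small. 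The inductive hypotheses (ii.k) and (iii.k) supply the $L^\infty(B_1)$, $L^1(\RR^n,\omega)$, $\cA^1$, and $C^{0,1}(B_2 \setminus B_1)$ bounds required by Theorem \ref{linthm1}, which yields $\|v_k\|_{C^{2,\alpha_0}(B_{1/2})} \leq C_1$. Taking $\bar{l}_k$ to be the first-order Taylor polynomial of $v_k$ at $0$ gives $|v_k(x) - \bar{l}_k(x)| \leq C_1 |x|^2$, so setting $l_{k+1}(x) = l_k(x) + \gamma^{k(\sigma-\kappa)} \bar{l}_k(x/\gamma^k)$ secures (i.k+1) and (iv.k+1) as soon as $\gamma$ is small enough that $C_1 \gamma^{2-(\sigma-\kappa)} < 1/4$, which is possible precisely because $\sigma - \kappa < 2$. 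Properties (ii.k+1) and (iii.k+1) follow by the same region-splitting argument as (C1)--(C3) in the proof of Theorem \ref{part2}, using the $C^{2,\alpha_0}$ bound on $v_k$ inside $B_{1/2}$ and the inductive estimates on $w_k$ outside.

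The main obstacle is the choice of the tail exponent $p$: it must exceed the iteration rate $\sigma-\kappa$ so that the rescaling factors in the analog of (C3) work in our favor, yet remain strictly below $\sigma_0$ so that the $L^1(\RR^n,\omega)$ and $\cA^1$ norms of $w_k$ stay uniformly bounded through the iteration. The global Lipschitz hypothesis $\|u\|_{C^{0,1}(\RR^n)} \leq 1$ is what makes (ii.0) and (iii.0) hold with such $p$, and the induction preserves these bounds at each subsequent step; in particular the $|h|^{\beta}$ modulus in the analog of (iii.k) will use the full Lipschitz exponent ($\beta=1$) on the interior but a polynomial tail $|2x|^{p-1}$ absorbing the factor $\gamma^{-k(\sigma-\kappa-1)}$ picked up when rescaling the gradient out to $|x| > 1/\gamma^k$. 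This tail limitation is precisely what prevents the argument from reaching $C^{\sigma}$ regularity and leads to the $\kappa$-loss built into the statement of the lemma.
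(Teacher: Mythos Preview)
Your proposal is correct and follows the paper's proof essentially verbatim: the paper constructs the same sequence of affine $l_k$ with iteration rate $\sigma-\kappa$ and tail exponent $\sigma-\kappa/2$ (your generic $p\in(\sigma-\kappa,\sigma_0)$ plays the same role), obtains $|w_k-v_k|\leq\gamma^{\sigma-\kappa/3}$ from \cite[Lemma~7]{CS2}, and invokes Theorem~\ref{linthm1} for the $C^{2,\alpha}$ bound on $v_k$. One minor bookkeeping point: your tail $|2x|^{p-1}$ in the analog of (iii.k) will not quite close on the interior region, since the $C^{0,1}$ estimate for $w_{k+1}$ on $B_1(y)$ inherits a factor $(1+|y|^{p})$ from (ii.k+1); the paper simply uses the same tail exponent in (iii.k) as in (ii.k), and your $p$ works there just as well.
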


\begin{proof}
We use the same notation as the proof of Theorem \ref{part2}. In particular, we find a sequence $l_{k}=a_{k}+b_{k}\cdot x$ such that 
\begin{itemize}
 \item[(i.k)] $\sup_{B_{\gamma^{k}}}|u(x)-l_{k}(x)|\leq \gamma^{k(\sigma-\kappa)}$
 \item[(ii.k)] $|u(\gamma^{k}x)-l_{k}(\gamma^{k}x)|\leq \gamma^{k(\sigma-\kappa)}(1+|x|^{\sigma-\kappa/2})$
 \item[(iii.k)] $|u(\gamma^{k}(x+h))-u(\gamma^{k}(x))|\leq |h|\gamma^{k(\sigma-\kappa)}(1+|2x|^{\sigma-\kappa/2})$ for $|h|<1$
 \item[(iv.k)] $|a_{k+1}-a_{k}|+\gamma^{k}|b_{k+1}-b_{k}|\leq C_{1}\gamma^{k(\sigma-\kappa)}$.
\end{itemize}
As before construct the rescaled function $w_{k}$. Then $w_{k}$ satisfies the rescaled equation $L_{k}w_{k}=f_{k}$ where the right-hand side is bounded by $\eta$. Then let $v_{k}$ solve
\[
\begin{cases}
L^{0}_{k}v_{k}=0 & x\in B_{1} \\
v_{k}=w_{k} & x \notin B_{1}.
\end{cases}
\]
Using the assumptions and \eqref{linhyp1}, $\|L^{0}_{k}-L_{k}\|\leq C\eta$. Choose $\eta$ sufficiently small so that, combining with (ii.k) and (iii.k), \cite[Lemma 7]{CS2} implies $|w_{k}-v_{k}|\leq \gamma^{\sigma-\kappa/3}$. Also using (ii.k) and (iii.k), Theorem \ref{linthm1} applies to $v_{k}$ to give $\|v_{k}\|_{C^{2,\alpha}(B_{1/2})}\leq C_{0}$.

(i.k+1)-(iv.k+1) can now be checked the same way as before. Applying to translates of the operator and scaling completes the argument. 
\end{proof}

\begin{proof}[Proof of Theorem]

First, it suffices to work with classical solutions, using Remark \ref{apriori} (that the equation satisfies \eqref{strongcont} follows from (C)).

We will prove the estimate
\begin{equation} \label{linint1}
\|u\|_{C^{\sigma+\alpha}(B_{1/100})}\leq C_{\delta}+\delta \|u\|_{C^{\sigma+\alpha}(B_{1/2})}.
\end{equation}
As the hypotheses (A)-(C) only decrease when the equation is dilated, the same is true at every scale:
\[
\|u\|^{*}_{C^{\sigma+\alpha}(B_{r/100}(x))}\leq C_{\delta}+\delta \|u\|^{*}_{C^{\sigma+\alpha}(B_{r}(x))}.
\]
where $B_{r}(x)\subset B_{1/2}$ and $\|\cdot\|^{*}$ are the adimentional \Holder norms as in \cite{BFV}. It then follows from \cite[Lemma 8]{BFV} that $\|u\|_{C^{\sigma+\alpha}(B_{1/4})}\leq C$.

Now consider the equation satisfied by $q_{h}(x)=[u(x+h)-u(x)]/|h|^{\alpha^{*}}$:
\[
Lq_{h}(x)=\frac{f(x+h)-f(x)}{|h|^{\alpha^{*}}}-(2-\sigma)\int \frac{\delta_{2}u(x,y)[a(x+h)-a(x)]}{|h|^{\alpha^{*}}|y|^{n+\sigma}}dy
\]
for $x \in B_{1/4}$. The first term on the right is bounded by assumption (B), while the second, using (C), is bounded by $C(1+\|u\|_{C^{\sigma + \epsilon}(B_{1/2})})$. $q_{h}$ itself is in $L^{1}(\RR^{n},\omega)$ uniformly in $h$ by the usual argument with the weighted \Holder estimate from Remark \ref{part2rem2}. Thus Theorem \ref{part2} applies to give $q_{h}\in C^{1,\alpha}(B_{1/8})$.

We now make the usual reductions to apply Lemma \ref{linlem1}. In particular, let $\phi$ be a smooth cutoff supported on $B_{1/8}$ and $\equiv 1$ on $B_{1/16}$. Then $|L\phi q_{h}(x)|\leq C(1+\|u\|_{C^{\sigma + \epsilon}(B_{1/2})})$ for $|x|<1/32$ and $\phi q_{h}$ is $C^{0,1}(\RR^{n})$. Set $\kappa = \alpha^{*}-\alpha$ and $\epsilon = \alpha/2$. Then Lemma \ref{linlem1} gives
\[
\|q_{k}\|_{C^{\sigma -(\alpha^{*}-\alpha)}(B_{1/100})}\leq C(1+\|u\|_{C^{\sigma + \alpha/2}(B_{1/2})}).
\]
It follows that unless $\sigma+\alpha=2$
\[
\|u\|_{C^{\sigma +\alpha}(B_{1/100})}\leq C(1+\|u\|_{C^{\sigma + \alpha/2}(B_{1/2})})\leq C_{\delta}+\delta\|u\|_{C^{\sigma + \alpha}(B_{1/2})}
\]
with the last step by interpolation. This proves \eqref{linint1}. If $\sigma+\alpha=2$, simply prove the theorem for a slightly larger $\alpha'$ instead.
\end{proof}

Using the linear structure, we may further weaken the hypotheses:

\begin{cor} \label{lincor}
 The conclusion of Theorem \ref{linschauder} still holds under the alternative hypothesis
\begin{itemize}
 \item[(A')] $\|u\|_{L^{1}(\RR^{n},\omega}+[u]_{\cA^{\alpha^{*}}}+\|u\|_{C^{\alpha^{*}}(B_{2}\backslash B_{1})}\leq 1$.
\end{itemize}
\end{cor}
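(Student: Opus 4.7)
The plan is to reduce Corollary \ref{lincor} to Theorem \ref{linschauder} by linear decomposition. I would take a smooth radial cutoff $\eta$ with $\eta\equiv 1$ on $B_{3/2}$ and $\eta\equiv 0$ outside $B_{2}$, and write $u=v+w$ with $v=u\eta$ and $w=u(1-\eta)$. By linearity of $L$, $v$ satisfies $Lv=f-Lw$ on $B_{1}$, so it suffices to verify hypotheses (A)--(C) of Theorem \ref{linschauder} for this equation, since $v\equiv u$ on $B_{3/2}$ and the conclusion on $v$ transfers to $u$.

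Checking (A) for $v$ is essentially free from (A'): on $B_{3/2}\backslash B_{1}$, $v=u\in C^{0,\alpha^{*}}$; on $B_{2}\backslash B_{3/2}$, $v=u\eta$ is a product of a Hölder function with a smooth cutoff; outside $B_{2}$, $v\equiv 0$, matching continuously at $\partial B_{2}$. Hypothesis (C) is unchanged. The substantive task is (B): showing that $Lw\in C^{0,\alpha^{*}}(B_{1})$ with a universal constant.

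The crucial feature is that $w$ vanishes on $B_{3/2}$, so for $x\in B_{1}$ we have
\[
Lw(x)=(2-\sigma)\int_{|y|>1/2}\frac{[w(x+y)+w(x-y)]a(x,y)}{|y|^{n+\sigma}}dy,
\]
yielding $|Lw(x)|\leq C\|u\|_{L^{1}(\RR^{n},\omega)}$ immediately. To estimate $Lw(x)-Lw(x')$ with $h=x'-x$ small, I would split into the contribution from $a(x,y)-a(x',y)$, bounded by $C|h|^{\alpha^{*}}\|u\|_{L^{1}(\RR^{n},\omega)}$ via (C), and the contribution from shifting the arguments of $w$. The latter, after the change of variables $z=x+y$ and using $|z-x|\asymp|z|$ on $\{|z|>3/2\}$, reduces to integrals of $|w(z)-w(z-h)|/|z|^{n+\sigma}$. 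Writing
\[
w(z)-w(z-h)=[u(z)-u(z-h)](1-\eta(z))+u(z-h)[\eta(z-h)-\eta(z)],
\]
the first summand contributes $C|h|^{\alpha^{*}}[u]_{\cA^{\alpha^{*}}}$ (valid when $3/2>1+2|h|$, i.e.\ $|h|<1/4$), and the second $C|h|\,\|u\|_{L^{1}(\RR^{n},\omega)}$ from the smoothness of $\eta$ and the localization of $\nabla\eta$.

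Applying Theorem \ref{linschauder} to (a suitable universal rescaling of) $v$ then yields $\|v\|_{C^{\sigma+\alpha}(B_{1/2})}\leq C$, which gives the corollary since $v=u$ on $B_{1/2}$. The main obstacle is precisely this Hölder estimate on $Lw$: the cutoff placement is chosen so that $w$ is supported away from $B_{1}$, which separates the evaluation point from where the difference quotient lives and allows the far-field behavior of $u$ to be captured by the integral seminorm $[u]_{\cA^{\alpha^{*}}}$ rather than by any pointwise Hölder norm at infinity, which is exactly the replacement hypothesis (A') is designed to offer.
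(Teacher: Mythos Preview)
Your proposal is correct and matches the paper's own proof: the paper also takes a smooth cutoff $\phi$ supported on $B_{2}$ with $\phi\equiv 1$ on $B_{3/2}$, writes $L(\phi u)=f-L(1-\phi)u$ by linearity, and verifies (A) and (B) for $\phi u$ exactly as you outline. Your treatment is in fact slightly more thorough, since you explicitly isolate the contribution from $a(x,y)-a(x',y)$ via hypothesis (C), a step the paper's computation leaves implicit; your remark about applying Theorem~\ref{linschauder} after a universal rescaling also cleanly handles the fact that (C) is only assumed on $B_{3/4}$.
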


\begin{proof}
 Let $\phi$ be a smooth cutoff supported on $B_{2}$ and $\equiv 1$ on $B_{3/2}$. Then $\phi u$ satisfies (A). $L\phi u= f - L(1-\phi)u$ by linearity, and for $|x|<1$
\begin{align*}
 &|L(1-\phi)u(x+h)-L(1-\phi)u(x)| \\
&\leq C\int_{B_{3/2-|h|}^{C}}\frac{|(1-\phi)u(y+h)-(1-\phi)u(y)|}{|y-x|^{n+\sigma^{0}}}dy \leq C|h|^{\alpha^{*}},\\
\end{align*}
so $[L \phi u]_{C^{\alpha^{*}}(B_{1})}\leq C$. A similar argument reveals $|L\phi u|\leq C$. This means $\phi u$ satisfies (B) as well, and Theorem \ref{linschauder} applies.
\end{proof}

\begin{rem} We restricted the values of $\alpha^{*}$ to $(0,1)$ above to simplify the exposition. A modification of the proof will go through whenever $\delta_{k}u(x,h)/|h|^{\alpha^{*}}$ are in $L^{1}(\RR^{n},\omega)$ uniformly in $h$, where $\delta_{k}$ are the $k$-th symmetric differences (for some $k>\alpha^{*}$). In general, this will hold for $\alpha^{*}\in (0,1+s)$, where $s$ is the known boundary regularity of the equation. 
\end{rem}

\section{Nonlinear Applications}

Let $I$ be a nonlinear nonlocal operator of the form
\begin{equation}\label{nonlinop}
 I(u,x)=(2-\sigma)\int_{\RR^{n}}\frac{\rho(\delta_{2}u,y)}{|y|^{n+\sigma}}dy.
\end{equation}
We assume that $\rho(0)=0$ and for each $y$, $\rho(z,y)$ is twice continuously differentiable in $z$, and furthermore satisfies
\begin{equation}\label{nonlocassump1}
 0<\lambda<\partial_{z}\rho(z,y)<\Lambda<\infty \qquad \forall (z,y)\in \RR \times \RR^{n}\backslash\{0\}
\end{equation}
and
\begin{equation}\label{nonlocassump2}
 |\partial^{2}_{z}\rho(z,y)|\leq C_{0}(1+|y|^{-2}) \qquad \forall (z,y)\in \RR \times \RR^{n}\backslash\{0\}.
\end{equation}
Note that \eqref{nonlocassump1} guarantees uniform ellipticity with respect to $\cL_{0}$. The examples we have in mind are $\rho(z,y)=\bar{\rho}(z)$, which was mentioned in the introduction, and more generally $\rho(z,y)=|y|^{p}\bar{\rho}(z|y|^{-p})$ with $p\in [0,2]$. $\bar{\rho}$ is assumed $C^{2}$ with $\lambda<\bar{\rho}'<\Lambda$ and $|\bar{\rho}''|\leq C_{0}$. With $p=2$, this is the equation mentioned in \cite[Section 6]{CS1}.

Assume $u$ solves the BVP \eqref{BVP} for such an operator with $f,g$ assumed smooth ($C^{1,\alpha}$ suffices). We will show $u\in C^{1+\sigma+\alpha}(B_{1/2})$. First, a technical lemma to show we can work with classical solutions:

\begin{lem}\label{nonlinlem1}
 Assume $\sigma>\sigma_{0}>1$, $u$ solves the BVP \eqref{BVP}, $f,g$ are smooth, and $I$ is as in \eqref{nonlinop} with $\rho$ satisfying \eqref{nonlocassump1} and \eqref{nonlocassump2}. Then there is a sequence of operators $I^{\epsilon}$ of the same type, with $\rho^{\epsilon}$ satisfying the same assumptions, and functions $u_{\epsilon}$ solving
\[
 \begin{cases}
  I^{\epsilon}(u_{\epsilon},x)=f(x) & x \in B_{1}\\
  u_{\epsilon}=g & x\notin B_{1}
 \end{cases}
\]
such that $u_{\epsilon}$ is $C^{2,\alpha}$ on the interior of $B_{1}$ and $u_{\epsilon}\rightrightarrows u$ uniformly on $B_{1}$.
\end{lem}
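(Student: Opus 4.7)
The plan is to regularize by replacing $\rho(z,y)$ near the singularity $y=0$ with a linear function of $z$, so that the modified operator decomposes as a fractional Laplacian plus a bounded nonlocal lower-order term; existence of classical solutions then follows from the scheme already developed in Section 5, and the conclusion from compactness together with uniqueness.

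Concretely, fix $c \in (\lambda,\Lambda)$ and a smooth radial cutoff $\chi_\epsilon$ with $\chi_\epsilon \equiv 1$ on $B_{\epsilon/2}$ and supported in $B_\epsilon$, and set
\[
\rho^\epsilon(z,y) = (1-\chi_\epsilon(y)) \, \rho(z,y) + \chi_\epsilon(y) \, c z.
\]
Then $\rho^\epsilon(0,y)=0$, $\partial_z\rho^\epsilon \in (\lambda,\Lambda)$, and $\partial_z^2\rho^\epsilon = (1-\chi_\epsilon)\partial_z^2\rho$ still satisfies \eqref{nonlocassump2}, so $I^\epsilon$ has the form \eqref{nonlinop} with $\rho^\epsilon$ in the admissible class. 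Moreover, $I^\epsilon$ splits as
\[
I^\epsilon(u,x) = -c_\sigma (-\triangle)^{\sigma/2} u(x) + F^\epsilon(u,x),
\]
where $F^\epsilon(u,x) = (2-\sigma)\int (1-\chi_\epsilon(y))[\rho(\delta_2 u,y) - c\, \delta_2 u] |y|^{-n-\sigma}\, dy$ is Lipschitz in $u$ in sup norm (its integrand is supported in $|y| \geq \epsilon/2$, and $\rho$ is Lipschitz in $z$ by \eqref{nonlocassump1}), exactly matching the setup of Lemma \ref{estimates1}. The Leray--Schauder argument of Section 5 then applies essentially verbatim to the map $G[u]=v$ solving $-c_\sigma(-\triangle)^{\sigma/2} v = f - F^\epsilon(u,\cdot)$ with boundary data $g$, producing a viscosity solution $u_\epsilon$ to the regularized Dirichlet problem.

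To upgrade to $C^{2,\alpha}$: since $I^\epsilon$ is translation-invariant, uniformly elliptic with respect to $\cL_0$, and $g$ is Lipschitz, Theorem \ref{part1} gives $u_\epsilon \in C^{1,\alpha}(B_r)$ for every $r<1$, so in particular $\nabla u_\epsilon \in L^1(\RR^n,\omega)$; then Lemma \ref{estimates1}(iv) makes $F^\epsilon(u_\epsilon,\cdot)$ locally Lipschitz, and the Schauder theory for the fractional Laplacian (\cite[Theorem 65]{CS2}) applied to $-c_\sigma(-\triangle)^{\sigma/2} u_\epsilon = f - F^\epsilon(u_\epsilon,\cdot)$ gives the desired interior $C^{2,\alpha}$ regularity. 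For convergence, an elementary computation using $|\delta_2 \phi(x,y)| \leq M|y|^2$ on $C^{1,1}(x)$ test functions and $|\rho(z,y)-cz|\leq (\Lambda+c)|z|$ bounds $\|I-I^\epsilon\|$ by $C\epsilon^{2-\sigma}\to 0$. The $u_\epsilon$ are equicontinuous on $\bar B_1$ by Theorem \ref{boundaryreg} and uniformly bounded by smooth barriers, so any uniformly convergent subsequence $u_{\epsilon_k} \to u^*$ is, by stability of viscosity solutions (\cite[Lemma 5]{CS2}), a viscosity solution of the original BVP. The comparison principle for translation-invariant nonlocal operators from \cite{CS1} forces $u^* = u$, and the full family converges.

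The main obstacle is the $C^{2,\alpha}$ bootstrap: closing it requires enough regularity of $F^\epsilon(u_\epsilon,\cdot)$ to feed into the Schauder theory for $(-\triangle)^{\sigma/2}$, which in turn needs interior $C^{1,\alpha}$ regularity of $u_\epsilon$ uniform in $\epsilon$ (to invoke Lemma \ref{estimates1}(iv)). Because the regularized operators $I^\epsilon$ preserve translation-invariance and ellipticity with respect to $\cL_0$ but not necessarily with respect to $\cL_1$, this is precisely the point where Theorem \ref{part1} of this paper (rather than the earlier \cite[Theorem 13.1]{CS1}, which would require $\cL_1$-ellipticity) is essential.
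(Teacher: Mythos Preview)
Your proposal is correct and follows essentially the same strategy as the paper: modify $\rho$ near $y=0$ so that the operator splits as a fractional Laplacian plus a bounded lower-order piece, then invoke the Section~5 machinery (Leray--Schauder existence, bootstrap via Lemma~\ref{estimates1} and fractional-Laplacian Schauder theory, stability under $\|I-I^\epsilon\|\to 0$) to produce classical $u_\epsilon$ converging to $u$. The only substantive difference is the regularization formula: the paper applies the Section~5 construction \eqref{approxJ} verbatim, which for operators of the form \eqref{nonlinop} yields $\rho^\epsilon(z,y)=\rho(z(1-\phi_\epsilon(y)),y)+\lambda\phi_\epsilon(y)z$ (composition in the $z$-slot), whereas you take the convex combination $\rho^\epsilon(z,y)=(1-\chi_\epsilon(y))\rho(z,y)+\chi_\epsilon(y)cz$; both preserve \eqref{nonlocassump1}--\eqref{nonlocassump2} and lead to the same decomposition, so the paper's proof is simply the one-line observation that Section~5 already does all the work, while yours re-derives that machinery explicitly.
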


\begin{proof}
Simply observe that when we apply the construction used in the proof of Theorem \ref{ExUnqThm}, $I^{\epsilon}$ is of the form \eqref{nonlinop}, with $\rho^{\epsilon}$ given by
\[
\rho^{\epsilon}(z,y)=\rho(z(1-\phi_{\epsilon}(y)),y)+\phi_{\epsilon}(y)\lambda z. 
\]
A computation shows this satisfies \eqref{nonlocassump1} and \eqref{nonlocassump2}.
\end{proof}

\begin{thm}\label{nonlinthm}
 Let $I$ be as above, $\sigma>\sigma_{0}>1$, and $u$ satisfy $Iu=f$ in $B_{2}$, $u=g$ in $B_{2}^{C}$ with $f,g$ smooth. Then $u\in C^{1+\sigma+\alpha}(B_{1/2}).$
\end{thm}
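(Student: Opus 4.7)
The plan is to establish a uniform \emph{a priori} estimate $\|u_\epsilon\|_{C^{1+\sigma+\alpha}(B_{1/2})} \leq C$ for the classical solutions $u_\epsilon \in C^{2,\alpha}(B_1)$ supplied by Lemma \ref{nonlinlem1} (which preserves the form \eqref{nonlinop} and in particular keeps the operator translation-invariant), and then to pass to the limit via Remark \ref{apriori}. All constants below are to be independent of $\epsilon$.

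Since $I$ is translation-invariant and, by \eqref{nonlocassump1}, uniformly elliptic with respect to $\cL_0$ with $\sigma > \sigma_0 > 1$, Theorem \ref{part2} applied to $u_\epsilon$ gives the baseline estimate $\|u_\epsilon\|_{C^{1,\alpha_1}(B_{1/2})} \leq C$ for some universal $\alpha_1 > 0$. Now, because $u_\epsilon$ is classically $C^{2,\alpha}$, I can differentiate \eqref{nonlinop} under the integral sign: for any unit vector $e$, $v := \partial_e u_\epsilon$ satisfies
\[
Lv(x) = \partial_e f(x), \qquad Lv(x) := (2-\sigma)\int_{\RR^n}\frac{a(x,y)\,\delta_{2} v(x,y)}{|y|^{n+\sigma}}\,dy,
\]
with $a(x,y) := \partial_z \rho(\delta_{2} u_\epsilon(x,y), y) \in (\lambda,\Lambda)$ by \eqref{nonlocassump1}. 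This puts $L$ in the framework of Section 6, and the task reduces to applying the linear Schauder theory to $v$.

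The central point is controlling the H\"older regularity of $a(\cdot,y)$ in $x$ uniformly in $y$. From \eqref{nonlocassump2},
\[
|a(x,y) - a(x_0,y)| \leq C_0(1+|y|^{-2})\,|\delta_{2} u_\epsilon(x,y) - \delta_{2} u_\epsilon(x_0,y)|.
\]
For $|y|\geq 1$ the prefactor is bounded and $C^{\alpha_1}$ regularity of $u_\epsilon$ suffices. For $|y|<1$ the singular $|y|^{-2}$ factor is cancelled precisely when $u_\epsilon \in C^{2,\gamma}$, since Taylor's theorem then gives $|\delta_{2} u_\epsilon(x,y) - \delta_{2} u_\epsilon(x_0,y)| \leq C|y|^{2}|x-x_0|^{\gamma}$, matching hypothesis (C) of Corollary \ref{lincor} with $\alpha^{*} = \gamma$. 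So the remaining work is to bootstrap up to a uniform $C^{2,\gamma}$ estimate. I do this by first applying Theorem \ref{part2} to $v$: although the estimate on $a$ extractable from only $u_\epsilon \in C^{1,\alpha_1}$ is not uniform in $y$, the near-singular behavior at $y=0$ is mild enough that, combined with the $(2-\sigma)$ prefactor in $L$ and the $|y|^{2}$ vanishing of admissible $C^{1,1}$ test functions, the norm $\|L - L_{x_0}\|_\sigma$ is made less than $\eta$ after rescaling to small balls around each $x_0 \in B_{1/2}$. Theorem \ref{part2} then yields $\|u_\epsilon\|_{C^{2,\alpha_1}(B_{1/4})} \leq C$ uniformly in $\epsilon$. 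With this improved regularity, the H\"older control of $a$ becomes uniform in $y$, so Corollary \ref{lincor} applies to $v$ (on a slightly smaller ball) with $\alpha^{*} = \alpha_1$ and yields $v \in C^{\sigma+\alpha}(B_{1/8})$ for any $\alpha < \alpha_1$, i.e., $\|u_\epsilon\|_{C^{1+\sigma+\alpha}(B_{1/8})}\leq C$. A covering argument extends the estimate to $B_{1/2}$, and Remark \ref{apriori} finishes the proof.

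The main obstacle is the first half of the bootstrap: producing a uniform $C^{2,\gamma}$ estimate at a stage when the linearized coefficient $a(\cdot,y)$ is only H\"older in $x$ with a modulus that degenerates near $y=0$. Assumption \eqref{nonlocassump2} is sharply calibrated so that the second-order vanishing of $\delta_{2} u_\epsilon$ at small $y$ matches the $|y|^{-2}$ growth of $\partial_{z}^{2}\rho$, providing the cancellation required once $C^{2,\gamma}$ regularity is available; the milder singularity appearing before that stage remains integrable enough against the kernel weight for the perturbative Theorem \ref{part2} to close the loop.
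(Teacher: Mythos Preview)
Your overall strategy mirrors the paper's: reduce to classical solutions via Lemma \ref{nonlinlem1}, linearize around $u_\epsilon$, bootstrap to a uniform $C^{2,\gamma}$ estimate, and then invoke Corollary \ref{lincor}. The paper works with difference quotients $w_h=(u(\cdot+h)-u)/|h|$ rather than $v=\partial_e u_\epsilon$, but since $u_\epsilon\in C^{2,\alpha}$ this is a minor distinction.

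There is, however, a missing ingredient and a flawed step. The missing ingredient: to feed $v$ into Theorem \ref{part2} (and later Corollary \ref{lincor}) you need $v\in L^1(\RR^n,\omega)$ \emph{uniformly in $\epsilon$}, but your interior $C^{1,\alpha_1}(B_{1/2})$ bound says nothing about $\nabla u_\epsilon$ in the annulus near $\partial B_2$, where it generically blows up. The paper supplies exactly this via Corollary \ref{part1cor}, the weighted $C^{1,\alpha}$ estimate up to the boundary for the translation-invariant $I$, which yields $w_h\in L^1(\RR^n,\omega)\cap\cA^{s/2}$ uniformly; you need to invoke that (or Remark \ref{part2rem2}) rather than Theorem \ref{part2} alone. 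The flawed step is your claim that $\|L-L_{x_0}\|_\sigma<\eta$ after localizing to small balls using only $u_\epsilon\in C^{1,\alpha_1}$. The norm $\|\cdot\|_\sigma$ is a supremum over all dilations $\gamma\le 1$; after localizing to $B_r(z)$ and rescaling by $\gamma$, the coefficient difference is evaluated at $y$-arguments $r\gamma y$ arbitrarily close to $0$, and from \eqref{nonlocassump2} together with $u_\epsilon\in C^{1,\alpha_1}$ the best bound is $\min\bigl(2\Lambda,\,C|r\gamma y|^{\alpha_1-1}\bigr)$. For small $r\gamma$ the $2\Lambda$ bound wins on all of $|y|<1$, and $(2-\sigma)\int_{|y|<1}2\Lambda\,|y|^{2-n-\sigma}\,dy$ is of order one, not $o(1)$ in $r$. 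So localization does \emph{not} shrink the scale-invariant norm here; the ``$|y|^2$ vanishing of test functions'' is already built into the norm and does not buy the extra cancellation you need. The paper does not attempt this operator-norm route at the intermediate stage; it records $w_h\in L^1(\omega)\cap\cA^{s/2}$ from Corollary \ref{part1cor} and invokes Theorem \ref{part2} directly.
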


\begin{proof}
By Lemma \ref{nonlinlem1}, it suffices to prove this for classical solutions.

 Let $w_{h}(x)=\frac{u(x+h)-u(x)}{|h|}$ be the difference quotients. They satisfy the following equation for $x\in B_{1}$ and $|h|<1/2$:
\begin{align*}
 \frac{f(x+h)-f(x)}{|h|}&=
\frac{2-\sigma}{|h|}\int\frac{\rho(\delta_{2}u(x+h,y),y)-\rho(\delta_{2}u(x,y),y)}{|y|^{n+\sigma}}dy \\
&=\frac{2-\sigma}{|h|}\int\int_{0}^{1}\frac{\partial_{t}\rho(t\delta_{2}u(x+h,y)+(1-t)t\delta_{2}u(x,y),y)}{|y|^{n+\sigma}}dtdy \\
&=\int\frac{a(x,y)\delta_{2}w_{h}(x,y)}{|y|^{n+\sigma}}dy\\
\end{align*}
where
\[
 a(x,y)=\int_{0}^{1}\partial_{z}\rho(t\delta_{2}u(x+h,y)+(1-t)\delta_{2}u(x,y),y)dt.
\]
This is a linear elliptic equation with a \Holder right-hand side. Take $s$ as in Theorem \ref{boundaryreg} applied to $u$; then $u\in C^{0,s}(\RR^{n})$. It follows from Corollary \ref{part1cor} that $w_{h}\in L^{1}(\RR^{n},\omega)\cap \cA^{s/2}$. Theorem \ref{part2} applies to give $w_{h}\in C^{1.\alpha}(B_{1/2})$, so $u\in C^{2,\alpha}(B_{1/2})$.

We claim $|a(x,y)-a(x',y)|<C|x-x'|^{s}$ uniformly in $y$ for $x,x'\in B_{1/4}$. Indeed, by the fundamental theorem of calculus,
\begin{align*}
& a(x,y)-a(x',y)\\
&=\int_{0}^{1}\int_{0}^{1}\partial^{2}_{z}\rho(s[t\delta_{2}u(x+h,y)+(1-t)\delta_{2}u(x,y)]+(1-s)[t\delta_{2}u(x'+h,y)+(1-t)\delta_{2}u(x',y)],y)\\
&[t\delta_{2}(u(x+h,y)-u(x'+h,y))+(1-t)\delta_{2}(u(x,y)-u(x',y))]dsdt \\
&\leq \int_{0}^{1}C_{0}(1+|y|^{-2})[t\delta_{2}(u(x+h,y)-u(x'+h,y))+(1-t)\delta_{2}(u(x,y)-u(x',y))]\\
\end{align*}
using \eqref{nonlocassump2}. Then use that for $|h|,|y|<1/8$, 
\[
\frac{|\delta_{2}(u(x+h,y)-u(x'+h,y))|}{|y|^{2}}+\frac{|\delta_{2}(u(x,y)-u(x',y))|}{|y|^{2}}\leq C\|u\|_{C^{2,\alpha}}|x-x'|^{\alpha} ,
\]
while for $|y|>1/8$, $u\in C^{0,s}(\RR^{n})$ and the constant in front is bounded.

The conclusion now follows from applying Corollary \ref{lincor} and rescaling.
\end{proof}

\section*{Acknowledgements}

The author would like to thank his PhD supervisor, Luis Caffarelli, for many valuable conversations regarding this project, especially concerning the use of boundary regularity ideas. He is also grateful for the errors and suggestions pointed out by Hector Chang Lara and the referees. This work was supported by National Science Foundation grant NSF DMS-1065926.

\bibliographystyle{plain}
\bibliography{RoughKernels0428}

\end{document}